\documentclass[12pt]{amsart}
\usepackage[margin=1.25in]{geometry}
\usepackage[utf8]{inputenc}
\usepackage[T1]{fontenc}
\usepackage{amsmath}
\usepackage{amssymb}
\usepackage{amsthm}
\usepackage{hyperref}
\usepackage{array}
\usepackage{longtable}
\theoremstyle{plain}
\newtheorem{theorem}{Theorem}[section]
\newtheorem{proposition}[theorem]{Proposition}
\newtheorem{lemma}[theorem]{Lemma}
\newtheorem{corollary}[theorem]{Corollary}

\theoremstyle{definition} 
\newtheorem{definition}[theorem]{Definition}

\theoremstyle{remark} 
\newtheorem{remark}[theorem]{Remark}

\newcommand{\ku}{{\Bbbk}} 
\DeclareMathOperator{\VecCat}{Vec}
\newcommand{\ModCat}{\mathcal{M}}
\newcommand{\Vect}[1]{\VecCat_{#1}}

\newcommand{\cC}{\mathcal{C}}
\newcommand{\cB}{\mathcal{B}}
\newcommand{\cD}{\mathcal{D}}
\newcommand{\cE}{\mathcal{E}}
\newcommand{\cM}{\mathcal{M}}
\newcommand{\cN}{\mathcal{N}}
\newcommand{\TY}{\operatorname{TY}}
\newcommand{\Aut}{\operatorname{Aut}}
\newcommand{\AutT}{\operatorname{Aut}_{\otimes}} 

\newcommand{\Fun}{\operatorname{Fun}}
\newcommand{\FPdim}{\operatorname{FPdim}}
\newcommand{\id}{\operatorname{id}}
\newcommand{\Inv}{\operatorname{Inv}}
\newcommand{\Irr}{\operatorname{Irr}}

\DeclareMathOperator{\Ind}{Ind}
\DeclareMathOperator{\Res}{Res}
\newcommand{\Rep}{\operatorname{Rep}}
\newcommand{\St}{\operatorname{St}}
\newcommand{\Hom}{\operatorname{Hom}}
\newcommand{\Ad}{\operatorname{Ad}}

\newcommand{\Mod}{\operatorname{Mod}}
\newcommand{\Alt}{\operatorname{Alt}}
\newcommand{\pr}{\operatorname{pr}}
\title[Fiber Functors of Equivariantizations]{Fiber Functors of Equivariantizations of Finite Tensor Categories}

\author[C. Galindo]{C\'esar Galindo}
\address{Departamento de Matem\'aticas, Universidad de los Andes, Bogot\'a, Colombia}
\email{cn.galindo1116@uniandes.edu.co}

\author[C. Gallego]{Claudia Gallego}
\address{Departamento de Matem\'aticas, Pontificia Universidad Javeriana, Bogot\'a, Colombia}
\email{gallegoj.cm@javeriana.edu.co}

\author[Y. Morales]{Yiby Morales}
\address{School of Engineering and Science, Tecnol\'ogico de Monterrey, Quer\'etaro, M\'exico}
\email{yk.morales@tec.mx}

\date{}

\subjclass[2020]{18M05, 18M20, 16T05}
\keywords{finite tensor category, equivariantization, fiber functor, module category, gauging}

\hypersetup{
  unicode=true,
  pdflang={en},
  hidelinks,
  pdftitle={Fiber Functors of Equivariantizations of Finite Tensor Categories},
  pdfauthor={César Galindo, Claudia Gallego, Yiby Morales},
  pdfsubject={Fiber functors on equivariantizations and gaugings},
  pdfkeywords={finite tensor category, equivariantization, fiber functor, module category, gauging}
}

\begin{document}

\begin{abstract}
Let $G$ be a finite group acting on a finite tensor category $\cC$. We classify fiber functors on the equivariantization $\cC^G$ in terms of equivariant exact module categories over $\cC$, indexed by subgroups of $G$. The data are a subgroup $H\subseteq G$ and an $H$-equivariant $\cC$-module category $\cM$ whose underlying $\cC$-module category is indecomposable, exact, and semisimple; they give a fiber functor precisely when $H$ acts transitively on the simple objects of $\cM$ and the stabilizer cocycle of one, hence every, simple object is non-degenerate. Through Tannaka--Krein reconstruction this describes realizations of $\cC^G$ as the representation category of a finite-dimensional Hopf algebra, with no semisimplicity hypothesis on $\cC$. As applications, for odd primes $p$ we determine the fiber functors on $\Rep(H_p)$, where $H_p$ denotes Nikshych's semisimple Hopf algebra of dimension $4p^2$~\cite{Nik08}: there is one equivalence class if $p\equiv3\pmod4$ and two if $p\equiv1\pmod4$. We also use the classification for gaugings to determine which non-pointed entries in the small-dimensional list of~\cite{GNradical} are representation categories of semisimple factorizable Hopf algebras.
\end{abstract}
\maketitle

\section{Introduction}

Let $\cC$ be a finite tensor category over an algebraically closed field $\ku$ of characteristic zero. A \emph{fiber functor} on $\cC$ is a $\ku$-linear exact faithful tensor functor $F:\cC\to\VecCat_{\ku}$. Classical neutral Tannakian duality reconstructs affine group schemes from symmetric tensor categories equipped with a fiber functor~\cite{Saavedra72, DM82}. In the non-symmetric setting relevant here, Tannaka--Krein reconstruction instead produces a Hopf algebra~\cite{JS91, Schauenburg92, BV07}: for every fiber functor $F$ on a finite tensor category $\cC$, there is a finite-dimensional Hopf algebra $A_F$ with $\cC\simeq\Rep(A_F)$, under which $F$ becomes the forgetful functor~\cite[Theorem~5.3.12]{EGNO15}. The fiber-functor problem is thus the problem of \emph{Hopf realization}: deciding when a finite tensor category is the representation category of a finite-dimensional Hopf algebra, and counting the realizations arising from inequivalent forgetful functors.

For group-theoretical fusion categories this problem has a concrete answer in terms of finite groups and cohomology. Recall that a fusion category is called \emph{group-theoretical} if it is categorically Morita equivalent to $\VecCat_G^\omega$, the category of finite-dimensional $G$-graded vector spaces with associativity constraint defined by a $3$-cocycle $\omega\in Z^3(G,\ku^\times)$. In this case fiber functors are described by subgroup and cochain data~\cite{Os03, Nik08}. Equivariantizations give a different source of examples. If a finite group $G$ acts on a tensor category $\cC$, then $\cC^G$ is the category of objects of $\cC$ equipped with compatible $G$-equivariant structures, and is again a finite tensor category. Equivariantizations of Tambara--Yamagami categories, the fusion categories with simple objects $A\cup\{m\}$ and fusion rule $m\otimes m=\bigoplus_{a\in A}a$, give non-group-theoretical semisimple Hopf algebras~\cite{Nik08, TY98}, answering~\cite[Question~8.45]{ENO05}. Thus non-group-theoretical fusion categories may still admit fiber functors; consequently, the group-theoretical classification does not cover the general fiber-functor problem. Our first result classifies fiber functors on an arbitrary equivariantization.

\begin{theorem}\label{thm:intro_main_classification}
Let $G$ be a finite group acting on a finite tensor category $\cC$. Fiber functors on $\cC^G$ are classified by conjugacy classes of pairs $(H,\cM)$, where $H\subseteq G$ is a subgroup and $\cM$ is an $H$-equivariant $\cC$-module category whose underlying $\cC$-module category is indecomposable, exact, and semisimple, satisfying:
\begin{enumerate}
\item $H$ acts transitively on $\Irr(\cM)$;
\item for some, and hence every, simple object $X\in\cM$, the stabilizer cocycle
\[
[\alpha_X]\in H^2(\St_H(X),\ku^\times)
\]
is non-degenerate.
\end{enumerate}
\end{theorem}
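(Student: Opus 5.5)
The plan is to translate fiber functors into module categories and then use the Morita equivalence between $\cC^G$ and $\cC\rtimes G$.

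\textbf{Step 1 (fiber functors as module categories).} A fiber functor $F:\cC^G\to\VecCat_\ku$ is the same thing as a structure of indecomposable exact $\cC^G$-module category on $\VecCat_\ku$. Equivalently, it is an exact $\cC^G$-module category $\cN$ with exactly one simple object $S$, and we require that $\underline{\Hom}(S,S)$ be such that $\cN\simeq\VecCat_\ku$ as a category. Equivalence classes of fiber functors correspond to equivalence classes of such rank-one module categories.

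\textbf{Step 2 (pass to the crossed product).} $\cC^G$ is categorically Morita equivalent to $\cC\rtimes G$ via the bimodule $\cC$. So exact $\cC^G$-module categories correspond to exact $(\cC\rtimes G)$-module categories. I will use the known classification of the latter, in Galindo's description for finite tensor categories: indecomposable exact $(\cC\rtimes G)$-module categories are induced from pairs $(H,\cM)$, up to conjugation. Here $H\subseteq G$ and $\cM$ is an $H$-equivariant indecomposable exact $\cC$-module category. The induced module category is $\Ind_H^G\cM=\bigoplus_{gH}\cM$, or $\cC\rtimes G\boxtimes_{\cC\rtimes H}\cM$. Under the Morita correspondence, the $\cC^G$-module category attached to $(H,\cM)$ is the equivariantization $\cM^H$, regarded as a $\cC^G$-module category by restricting the $G$-action to $H$. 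To justify this identification I will compute $\Fun_{\cC\rtimes G}(\cC,\Ind_H^G\cM)\simeq\Fun_{\cC\rtimes H}(\cC,\cM)\simeq\cM^H$, using Frobenius reciprocity for induction.

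\textbf{Step 3 (counting simples of $\cM^H$).} I now need to know exactly when $\cM^H\simeq\VecCat_\ku$ as an abelian category. By Clifford theory for equivariant objects, the simple objects of $\cM^H$ are given by pairs consisting of an $H$-orbit of a simple $X\in\cM$ and an irreducible projective representation of $\St_H(X)$ with the stabilizer cocycle $\alpha_X$. This holds provided $\cM$ is semisimple. If instead $\cM$ is non-semisimple, then $\cM^H$ is non-semisimple too (characteristic zero, so the forgetful functor to $\cM$ is separable), and so $\cM^H\not\simeq\VecCat_\ku$. This forces semisimplicity.

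In the semisimple case, $\cM^H$ has exactly one simple object iff two things hold: there is a single orbit, which is transitivity, and $\ku_{\alpha_X}\St_H(X)$ has a unique irreducible module, which is non-degeneracy of $\alpha_X$. Moreover, $\cM^H$ is automatically semisimple, so it is equivalent to $\VecCat_\ku$. Independence of the choice of $X$ follows because conjugating by $h\in H$ identifies $\St_H(X)$ with $\St_H(hX)$ and transports the cocycle class.

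\textbf{Main obstacle.} The hardest part is Step 2 in the non-semisimple setting. I must ensure that the Morita correspondence preserves exactness, and that every indecomposable exact $\cC^G$-module category really is of the form $\cM^H$ with $\cM$ exact over $\cC$. I must also check that conjugacy of pairs matches equivalence of fiber functors. I would handle this via the $\cC^G$–$(\cC\rtimes G)$ bimodule $\cC$, which is invertible. Then I would apply the classification of exact module categories over crossed products through the restriction $\Res^{\cC\rtimes G}_{\cC}$, whose indecomposable summands are permuted transitively by $G$. $H$ is the stabilizer of one summand. Care is needed with the equivariant structure on that summand, which has to be checked coherent, and I would track it via the $2$-cocycle data of the $H$-action.
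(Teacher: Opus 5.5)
Your proposal is correct and follows essentially the paper's own argument. The shared steps are: fiber functors as rank-one exact module categories; the Galindo--Mombelli description of exact $\cC^G$-module categories as equivariantizations $\cM^H$; conjugacy detected by passing, via Tambara's correspondence, to induced $\cC\rtimes G$-module categories and the Mej\'ia Casta\~no--Mombelli criterion, exactly as in Theorem~\ref{thm:conjugacy_equivariantization}; and the orbit--stabilizer count of simples of $\cM^H$. One small slip: separability of the forgetful functor $\cM^H\to\cM$ (which uses that $|H|$ is invertible) gives the implication from semisimplicity of $\cM$ to that of $\cM^H$, not the converse you attach it to; the converse holds in any characteristic because every $M\in\cM$ is a natural direct summand of $\bigoplus_{h\in H}U_h(M)$, the underlying object of the induced equivariant object, and the paper simply cites Tambara's Proposition~5.2 for both directions.
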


This is Theorem~\ref{thm:main_classification}; the relevant notion of conjugacy is Definition~\ref{def:conjugate_equivariant_module_categories}. In this form, the theorem turns the fiber-functor problem for $\cC^G$ into a rank-one condition for equivariant module categories; via Tannaka--Krein reconstruction it also covers Hopf realizations outside the semisimple setting. Structurally, the simple objects of $\cM^H$ are obtained from $H$-orbits in $\Irr(\cM)$ together with projective representations of stabilizers. Hence $\cM^H$ has rank one exactly when the $H$-action is transitive and the twisted group algebra $\ku_{\alpha_X}[\St_H(X)]$ of a stabilizer is simple. When the relevant second cohomology groups vanish, for instance whenever every subgroup of $G$ is cyclic, the description reduces to freeness and transitivity of the $H$-action on $\Irr(\cM)$.

The proof combines three known facts. Fiber functors correspond to rank-one exact module categories~\cite[Proposition~7.3.3 and Example~7.4.6]{EGNO15}. Exact module categories over equivariantizations come from equivariant module categories over the original category~\cite{GM11}. Finally, simple objects in equivariant semisimple categories are governed by projective representations of stabilizers. Combining these facts gives Theorem~\ref{thm:intro_main_classification}.

Here and throughout, $C_n$ denotes the cyclic group of order $n$.

Our first application concerns a family of semisimple Hopf algebras due to Nikshych.

\begin{theorem}\label{thm:intro_nikshych}
Let $p$ be an odd prime, and let $H_p$ be Nikshych's semisimple Hopf algebra of dimension $4p^2$ constructed in~\cite{Nik08}. Then $\Rep(H_p)$ has one equivalence class of fiber functors if $p\equiv3\pmod4$, and two equivalence classes of fiber functors if $p\equiv1\pmod4$.
\end{theorem}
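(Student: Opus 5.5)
We would prove Theorem~\ref{thm:intro_nikshych} by applying Theorem~\ref{thm:intro_main_classification} to Nikshych's description $\Rep(H_p)\simeq \TY(A,\chi,\tau)^{C_2}$. Here $A=C_p\times C_p$, $\chi$ is a specific non-degenerate symmetric bicharacter, and $C_2$ acts through an automorphism of $A$ preserving $\chi$, twisted by the appropriate sign data on the object $m$ (this is the construction in~\cite{Nik08}). Since $G=C_2$, there are only two subgroups. For each, we classify the admissible pairs $(H,\cM)$ and then count them up to the conjugacy of Definition~\ref{def:conjugate_equivariant_module_categories}.

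\emph{Step 1: reduction by subgroup.} All subgroups of $C_2$ are cyclic, so every $H^2(\St_H(X),\ku^\times)$ vanishes. Non-degeneracy of the stabilizer cocycle then forces the stabilizers to be trivial, so the $H$-action on $\Irr(\cM)$ must be free and transitive. This leaves two cases.
\begin{itemize}
\item[(a)] $H=1$ and $\cM$ has rank one, i.e.\ a fiber functor on $\TY(A,\chi,\tau)$, taken up to the $C_2$-action.
\item[(b)] $H=C_2$ and $\cM$ is an indecomposable $\TY$-module category of rank two, with a $C_2$-equivariant structure whose induced permutation swaps the two simples.
\end{itemize}
In case (a), Tambara's classification says fiber functors on $\TY(A,\chi,\tau)$ correspond to suitable automorphisms $\sigma$ of $A$ (an involutive/“$\chi(\sigma a,b)$ symmetric” condition, plus a sign compatible with $\tau$). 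For $A=C_p^2$ with Nikshych's $\chi$, we expect the existence of such $\sigma$ to hinge on whether $-1$ is a square mod $p$. We would then compute the orbits of the $C_2$-action on these fiber functors.

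\emph{Step 2: rank-two module categories.} For case (b) we would use the Meir--Musicantov classification of module categories over Tambara--Yamagami categories. There, an indecomposable module category is given by a subgroup $L\subseteq A$ with a $2$-cocycle and extra data extending the $\VecCat_A$-module structure to $m$. We would extract those of rank two: the restriction to $\VecCat_A$ splits as two orbits of size one swapped by $m$, or as a single rank-two $\VecCat_A$-module category. Next we determine which of these are $C_2$-stable. For each stable one, we compute the obstruction to a $C_2$-equivariant structure (in $H^3(C_2,\ku^\times)$ relative to the invertible module autoequivalences), and the induced permutation of the two simples. The equivariant structures form a torsor over $H^2(C_2,\ku^\times)=0$ twisted by $H^1(C_2,\cdot)$; we would check that the resulting choices are identified under conjugacy.

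\emph{Step 3: count.} We sum the contributions of (a) and (b) and verify the totals: $1$ when $p\equiv 3\pmod 4$, and $2$ when $p\equiv1\pmod 4$. The extra class for $p\equiv1\pmod4$ should come from the existence of $i\in\mathbb{F}_p$ with $i^2=-1$, which produces an additional $\chi$-compatible Lagrangian or involution.

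\emph{Main obstacle.} The hardest step is Step 2: computing explicitly how $C_2$ acts on the Meir--Musicantov parameters, including the sign $\tau$ and the data on $m$, and deciding existence and uniqueness of equivariant structures. We would first compute the action on the parameter $L$ (subgroups of $\mathbb{F}_p^2$ isotropic or Lagrangian for $\chi$) via the explicit matrix of the generator of $C_2$. We would then handle the cocycle and sign data by direct computation in the small cohomology groups of $C_p^2$ and $C_2$, where the eigenvalues $\pm i$ of the relevant $2\times2$ matrix over $\mathbb{F}_p$ govern the dichotomy.
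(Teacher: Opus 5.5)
Your Step~1 is the paper's reduction: all $H^2$ vanish, so the action on $\Irr(\cM)$ must be free and transitive. That leaves $H=\{e\}$ with $\cM$ of rank one, and $H=C_2$ with $\cM$ of rank two.

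\textbf{Case (a).} Your expectation here is false, and so is your account of where the extra class comes from. Nikshych's form is the hyperbolic form $\chi_p((x_1,x_2),(y_1,y_2))=\xi^{x_1y_2+x_2y_1}$. For \emph{every} odd $p$, its only Lagrangians are $L_1=C_p\times\{0\}$ and $L_2=\{0\}\times C_p$. By Tambara, $\cC_p=\TY(A_p,\chi_p,p^{-1})$ therefore has exactly two fiber functors, and the swap $t$ interchanges them. So case (a) always contributes exactly one class. There is no ``additional $\chi$-compatible Lagrangian'' when $p\equiv1\pmod4$; the whole dichotomy lives in case (b).

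\textbf{Case (b).} This makes Step~2 the entire proof, and it is only a program. You give no way to decide whether the obstruction in $H^3(C_2,\ku^\times)\cong\mathbb Z/2$ vanishes, nor to count the resulting equivariant structures. Two arguments are missing.

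(1) \emph{Invariance.} By~\cite[Lemma~4.3, Remark~4.5]{Nik08}, every indecomposable rank-two $\cC_p$-module category is some $\cN_\mu$ with $[\mu]\neq0$. Its restriction to $\Vect{A_p}$ is $\ModCat(A_p,\mu)\oplus\ModCat(A_p,\mu')$ with $\Alt(\mu)=\lambda J$ and $\Alt(\mu')=\lambda^{-1}J$, using $\det S=-1$. The unordered pair $\{\lambda,\lambda^{-1}\}$ is a complete invariant, and $t^TJt=-J$ shows that $T_t$ sends $\lambda\mapsto-\lambda$. Hence a $T_t$-invariant one exists iff $\lambda^2=-1$ is solvable in $\mathbb{F}_p$, and it is then unique. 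Your alternative of a single rank-two $\Vect{A_p}$-summand cannot occur, since those ranks are $1$, $p$, $p^2$. The same parameter computation shows that any module equivalence $\cM\to\cM^t$ must swap the two simples.

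(2) \emph{Existence and uniqueness of the equivariant structure for $p\equiv1\pmod4$.} The paper avoids obstruction theory altogether:
\begin{itemize}
\item In $\cC_p\rtimes C_2$, the objects $[a,e]$ and $[m,s]$ generate a Tambara--Yamagami subcategory $\TY(A_p,\widetilde\chi_p,p^{-1})$, where $\widetilde\chi_p(a,b)=\chi_p(a,t(b))=\xi^{x_1y_1+x_2y_2}$.
\item This form is hyperbolic exactly when $-1$ is a square, with Lagrangians $\langle(1,\pm i)\rangle$.
\item Inducing one of its two fiber functors to $\cC_p\rtimes C_2$ gives the required $C_2$-equivariant rank-two module category with free action.
\item Conversely, Clifford theory for the $\mathbb Z/2\times C_2$-grading shows every such module category is induced from this subcategory.
\item The two inductions agree because $t$ swaps $\langle(1,i)\rangle$ and $\langle(1,-i)\rangle$.
\end{itemize}

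Your remark that $\pm i$ governs the dichotomy points the right way, but it is the twisted form $\chi_p(a,t(b))$, not $\chi_p$, whose Lagrangians matter. Without (1) and (2), the count in Step~3 is not established.
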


This is Theorem~\ref{thm:nikshych_fiber_functors}. In Nikshych's construction, $H_p$ is reconstructed from the $C_2$-equivariantization of the Tambara--Yamagami category $\TY(C_p\times C_p,\chi_p,p^{-1})$, where $\chi_p$ is the standard hyperbolic form and $C_2$ acts by interchanging the two factors. This Tambara--Yamagami category has exactly two fiber functors, corresponding to the ordered decompositions $C_p\times C_p=L_1\times L_2$ and $C_p\times C_p=L_2\times L_1$, where $L_1=C_p\times\{0\}$ and $L_2=\{0\}\times C_p$. The involution used in the construction of $H_p$ interchanges them; hence they give one equivalence class on the equivariantization. The remaining possibility left by Theorem~\ref{thm:intro_main_classification}, namely a fiber functor coming from $H=C_2$, is controlled by the rank-two module categories over this Tambara--Yamagami category described in~\cite{Nik08}: it is impossible for $p\equiv3\pmod4$, and contributes one additional class for $p\equiv1\pmod4$.

Our second application concerns gaugings. A gauging of a braided fusion category $\cB$ is an equivariantization $\cE^G$, where $\cE=\bigoplus_{g\in G}\cE_g$ is a faithful $G$-crossed braided fusion category with neutral component $\cE_e=\cB$~\cite{CGPW16, ENO10}. Combining Theorem~\ref{thm:intro_main_classification} with Clifford theory for module categories over graded fusion categories~\cite{Gal12, MM20}, we reduce the module-category input from $\cE$ to module categories over the neutral component $\cB$, together with extension and equivariant-lift data; this is Theorem~\ref{thm:gauging_clifford_fiber_functors}. For prime cyclic gaugings the description is especially explicit: if $G=C_p$, the relevant second cohomology groups vanish, and Corollary~\ref{cor:prime_cyclic_gauging} says that a fiber functor either comes from a rank-one module category over $\cE$, or from a $C_p$-equivariant $\cE$-module category with $p$ simple objects. In the latter case, the module category is either induced from a $\cB$-module category $\cN$ for which every component $\cE_g\boxtimes_{\cB}\cN$ has rank one, or is already an $\cE$-module category of rank $p$; in both subcases the generator acts on simple objects as a single $p$-cycle.

We apply this to the small radical gaugings classified in~\cite{GNradical}. There the Tannakian radical is the subcategory $\Rep(G)$ being gauged, and the mantle is the braided fusion category $\cB$ appearing as the neutral component. The non-pointed, non-degenerate entries are canonical gaugings of their mantles by their radicals. We reproduce the relevant table as Table~\ref{tab:small_radical_gaugings}, where $\cC(A,q)$ denotes the pointed modular category attached to a \emph{metric group} $(A,q)$, that is, a finite abelian group $A$ with a non-degenerate quadratic form $q$, and add the fiber-functor conclusion. Since a non-degenerate braiding on $\Rep(H)$ corresponds to a factorizable quasitriangular structure on the semisimple Hopf algebra $H$, deciding which entries admit a fiber functor is the same as deciding which are realized by finite-dimensional semisimple factorizable Hopf algebras.

\begin{corollary}\label{cor:intro_factorizable_realizations}
Apart from pointed categories, the entries from the small-dimensional list in~\cite{GNradical} considered in Table~\ref{tab:small_radical_gaugings} that admit realizations as representation categories of finite-dimensional semisimple factorizable Hopf algebras are exactly:
\begin{enumerate}
\item the ordinary dihedral doubles $\Rep(D(D_r))$;
\item the hyperbolic prime-cyclic twisted-double rows $\Rep(D^\omega(C_r\rtimes C_p))$, with $p\mid r-1$, for every cohomology class $[\omega]\in H^3(C_r\rtimes C_p,\ku^\times)$.
\end{enumerate}
All remaining non-pointed entries in that table have no such realization.
\end{corollary}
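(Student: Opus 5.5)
The plan is a case-by-case check of Table~\ref{tab:small_radical_gaugings}, using Theorem~\ref{thm:intro_main_classification} and the gauging reduction (Theorem~\ref{thm:gauging_clifford_fiber_functors}; for $G=C_p$, Corollary~\ref{cor:prime_cyclic_gauging}). The first step is to observe that a non-degenerate braided fusion category admits a fiber functor exactly when it is $\Rep$ of a semisimple factorizable Hopf algebra. Tannaka--Krein reconstruction gives a semisimple Hopf algebra $A_F$ with $\cC\simeq\Rep(A_F)$, and the braiding transports to an $R$-matrix that is factorizable precisely because the braiding is non-degenerate. So for each non-pointed entry we only have to decide whether $\cE^G$ has a fiber functor.

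For the positive rows, fiber functors can be exhibited directly.
\begin{enumerate}
\item $\Rep(D(D_r))$ is the representation category of the Drinfeld double $D(D_r)$, so it has the forgetful functor.
\item For $\Rep(D^\omega(C_r\rtimes C_p))$ the quasi-Hopf algebra $D^\omega$ is in general not Hopf, so we argue through group-theoretical data instead. This category is the center of $\VecCat_{C_r\rtimes C_p}^\omega$. It suffices to find a fiber functor on $\VecCat^{\omega}_{K}$ for some $K$ in the Morita class, because $\cZ(\cD)\to\cD\to\VecCat$ is then a fiber functor on the center.
\item Concretely, with $K=C_r\rtimes C_p$, a fiber functor on the center corresponds to a subgroup $L\subseteq K\times K^{op}$ with $\omega\times\omega^{-1}$ trivial on $L$ and $|L|=|K|$, together with a non-degenerate $2$-cochain. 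The diagonal $L=\Delta(K)$ always works, whatever $[\omega]$ is.
\item Alternatively, Corollary~\ref{cor:prime_cyclic_gauging} applies directly: the mantle is pointed hyperbolic, $\cC(C_r\times C_r,\mathrm{hyp})$, and the $C_p$-orbit of a Lagrangian gives the required $p$-cycle data.
\end{enumerate}

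For the negative rows, the main tool is Corollary~\ref{cor:prime_cyclic_gauging} when $G=C_p$, and Theorem~\ref{thm:gauging_clifford_fiber_functors} in general. A fiber functor requires either a rank-one $\cE$-module category or a $\cB$-module category $\cN$ such that each component $\cE_g\boxtimes_\cB\cN$ has rank one. The second alternative forces a strong Frobenius--Perron constraint:
\[
\FPdim(\cE_g)=\FPdim(\cB)
\]
must equal the square of the dimension of the unique simple object of $\cE_g\boxtimes_\cB\cN$ relative to $\cN$. In particular $\FPdim(\cB)/\FPdim_{\cN}$ must be a perfect square compatible with the mantle.

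When the mantle $\cB$ is non-pointed, for example an Ising-type or $\mathrm{SU}(2)_k$-type mantle, we use two facts.
\begin{enumerate}
\item Module categories over $\cB\boxtimes\cB^{rev}$-type mantles are known.
\item Integrality fails, since a category with a fiber functor must be integral.
\end{enumerate}
When dimensions are non-integers the entry is excluded immediately; many rows should die by integrality alone.

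The main obstacle is the integral non-pointed rows that are not of the listed two forms, for instance gaugings of anisotropic metric groups $\cC(A,q)$ by $C_p$ acting non-hyperbolically. For these I would argue as follows.
\begin{enumerate}
\item A rank-one module over $\cE^G$ pulls back to a $\cB$-module category $\cN$ with $\cN\simeq\cB_{A}$ for an étale or Lagrangian-type algebra.
\item Rank considerations then force $\cC(A,q)$ to contain a $G$-stable set of Lagrangian subgroups permuted as a $p$-cycle, or a $G$-stable Lagrangian subgroup.
\item Anisotropy, or the Witt class of $q$, rules both out; this is where the case $p\equiv 3\pmod 4$-type phenomena enter via Gauss sums.
\end{enumerate}
I expect this Witt-class bookkeeping across the rows of the table to be the delicate part.
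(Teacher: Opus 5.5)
The reduction to fiber functors (Tannaka--Krein reconstruction, with factorizability coming from non-degeneracy) and the integrality exclusions are fine. The argument for the positive row $\Rep(D^\omega(C_r\rtimes C_p))$, however, is wrong.

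\textbf{The positive row.} In the model $\mathcal{Z}(\Vect{K}^{\omega})\simeq(\Vect{K\times K}^{\widetilde\omega})^*_{\ModCat(\Delta K,\widetilde\mu)}$, a pair $(L,\nu)$ gives a fiber functor only if two conditions hold (Proposition~\ref{prop:twisted_double_test}): $\Delta K\,L=K\times K$, and the induced cocycle on $\Delta K\cap L$ is nondegenerate. The diagonal $L=\Delta K$ violates both.
\begin{enumerate}
\item The double cosets $\Delta K\backslash(K\times K)/\Delta K$ are the conjugacy classes of $K$. So the resulting module category has rank at least the number of conjugacy classes; for $\nu=\widetilde\mu$ it is just the regular module category of the center.
\item $\Delta K\cong K$ has order $pr$, which is not a square, so it carries no nondegenerate class.
\end{enumerate}
If the diagonal worked for every $[\omega]$, every twisted double would have a fiber functor, including $\Rep(D^\omega(D_r))$ with $[\omega]\neq0$. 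That contradicts the statement you are proving.

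Your alternative via Corollary~\ref{cor:prime_cyclic_gauging} also fails. An element of odd order $p$ in $O(C_r^2,h)\cong\mathbb{F}_r^\times\rtimes C_2$ lies in the torus $\mathrm{diag}(a,a^{-1})$. It therefore fixes both of the only two Lagrangian lines, so no $p$-cycle arises.

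What is missing is a genuine complement to $\Delta K$ on which $\widetilde\omega$ is trivial. By Lyndon--Hochschild--Serre (coprime orders, and $C_p$ acting on $H^3(C_r,\ku^\times)\cong\mathbb{F}_r$ by $a^2\neq1$), every class is inflated: $[\omega]=\pi^*[\eta]$ with $\pi:K\to P=K/C_r$. Now take $L=\{(k,f(k))\}$, where $f(k)$ is the lift in $\langle y\rangle$ of $\pi(k)^{-1}$. Since $p$ is odd, $L\cap\Delta K=\{e\}$. Moreover $[\widetilde\omega|_L]=\pi^*[\eta]-(\iota\pi)^*[\eta]=0$, because inversion acts trivially on $H^3(C_p,\ku^\times)\cong H^4(C_p,\mathbb{Z})$.

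\textbf{The negative rows.} The integral dihedral rows are not addressed at all. These are $\Rep(D^\omega(D_r))$ with $\omega|_{C_r}\neq1$, and the half-turn hyperbolic row with $[\omega]=r$. There the mantle is $\Vect{\ku}$ or hyperbolic, so no anisotropy or Witt-class argument can apply. What is needed is:
\begin{enumerate}
\item Proposition~\ref{prop:twisted_double_test};
\item Goursat's lemma, to list the subgroups of $D_r\times D_r$ transverse to the diagonal (Lemma~\ref{lem:dihedral_goursat});
\item the restrictions of the explicit classes $\alpha_j$ to $R$ and to reflection subgroups. The class $j=r$ is invisible on $R$ and is detected only on a reflection $C_2$.
\end{enumerate}

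For the elliptic rows, your proposed obstruction is misplaced. Rank-one $\Vect{A}$-module categories exist whether or not $q$ is anisotropic. The Lagrangians relevant to group-theoreticality live in $\mathcal{Z}(\Vect{A})$, i.e.\ in $A\oplus A$ with $q\oplus q^{-1}$, which always contains the diagonal Lagrangian. The arguments that work are of a different kind:
\begin{enumerate}
\item \emph{Elliptic half-turn row.} The extension is $\TY(A,\chi,\tau)$ with $\chi$ elliptic. $H=\{e\}$ dies by Tambara's criterion. $H=C_2$ dies because the two $\Vect{A}$-summands $\ModCat(A,\mu)$ and $\ModCat(A,\mu')$ of a rank-two module category are inequivalent ($\lambda^2=-\det S$ has no solution), so no equivariant structure can swap the simples.
\item \emph{Prime-cyclic elliptic rows and the dimension-$36$ row.} You need non-group-theoreticality of $\cE_\alpha$, from \cite{GPR24}, respectively from the $\mathbb{F}_4$ computation of Lemma~\ref{lem:f4_elliptic_extension_non_gt}. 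Then:
\begin{itemize}
\item \cite{JL09} kills $H=\{e\}$;
\item a rank count kills $S=G$, since the ranks $[A:L]$ are powers of the characteristic, never $p$;
\item for $S=\{e\}$, a stability argument ($N(c)=1$, respectively a $C_3$-orbit inside a two-element set) either produces a stable pointed module category, contradicting \cite[Theorem~16]{MeMu12}, or forces the generator to fix all simples.
\end{itemize}
\end{enumerate}
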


This is Corollary~\ref{cor:factorizable_hopf_realizations}. Its proof draws on several distinct obstructions. The rows containing Ising categories, metaplectic categories, and centers of elliptic Tambara--Yamagami categories are excluded by integrality. For the dihedral twisted doubles, the group-theoretical description of fiber functors, Goursat's lemma, and the explicit cocycle representatives of~\cite{CGR00} show that only the trivial cohomology class survives. The odd elliptic prime-cyclic rows are excluded using the prime-cyclic gauging description, the non-group-theoreticality theorem of~\cite{GPR24}, and the Hopf-algebra obstruction of~\cite{JL09}. The dimension $36$ row requires instead a separate invariant-Lagrangian computation for the $\mathbb{F}_4$ extension, together with the same Hopf-algebra obstruction.

\medskip

The paper is organized as follows. Section~\ref{sec:preliminaries} fixes conventions on finite tensor categories, equivariantizations, and equivariant module categories, and records the conjugacy theorem for equivariantized module categories, Theorem~\ref{thm:conjugacy_equivariantization}. Section~\ref{sec:fiber-functors} proves the main classification, Theorem~\ref{thm:main_classification}. Section~\ref{sec:nikshych} proves Theorem~\ref{thm:nikshych_fiber_functors}. Section~\ref{sec:gauging} establishes the Clifford-theoretic description of fiber functors on gaugings and applies it to the table of~\cite{GNradical}.

\section{Preliminaries}\label{sec:preliminaries}

We use~\cite{EGNO15, EO04} as our main references for finite tensor categories and module categories. All categories and functors are $\ku$-linear, and functors are additive, unless stated otherwise.

\subsection{Finite tensor categories and their module categories}

A \emph{finite tensor category} is a finite abelian category equipped with a rigid monoidal structure, bilinear tensor product, and simple unit~\cite[Section~2.2]{EO04}; a \emph{fusion category} is a semisimple finite tensor category.

A left $\cC$-module category is a finite abelian category $\cM$ with an exact bilinear action $\overline\otimes:\cC\times\cM\to\cM$ and the usual associativity and unit constraints. It is \emph{exact} if $P\overline\otimes M$ is projective whenever $P$ is projective in $\cC$, and \emph{indecomposable} if it is not a direct sum of two nonzero module categories. A $\cC$-module functor $F:\cM\to\cN$ is an exact functor equipped with coherent natural isomorphisms
\[
F(X\overline\otimes M)\xrightarrow{\sim}X\overline\otimes F(M).
\]
We write $\Fun_{\cC}(\cM,\cN)$ for the category of module functors and module natural transformations.

\begin{remark}
If $\cM$ is indecomposable and exact, then
\[
\cC^*_{\cM}:=\Fun_{\cC}(\cM,\cM)
\]
is a finite tensor category~\cite[Theorem~3.27]{EO04}. We use reverse composition, $F\otimes G:=G\circ F$, so that evaluation gives the right action $M\triangleleft F=F(M)$ and the double-dual equivalence $(\cC^*_{\cM})^*_{\cM}\simeq\cC$. In this situation $\cC$ and $\cC^*_{\cM}$ are Morita equivalent.
\end{remark}

\subsection{Equivariant tensor categories and their module categories}

\subsubsection{Group actions on tensor categories}

Let $\underline G$ be the strict monoidal category associated with a group $G$. An action of $G$ on a tensor category $\cC$ is a monoidal functor
\[
\underline G\longrightarrow \underline{\AutT(\cC)}.
\]
By the coherence theorem for tensor categories with $G$-action~\cite[Theorem~2.4]{Gal17}, we may and do assume that the $G$-action is strict. We write $g_*$ for the tensor autoequivalence attached to $g$, so that $g_*h_*=(gh)_*$ and $e_*=\id_{\cC}$. Equivariant module categories will not be strictified; their coherence maps are retained below.

\subsubsection{The semidirect product}

For a tensor category $\cC$ with a strict $G$-action, the \emph{semidirect product} $\cC\rtimes G$ is the monoidal category
\[
\cC\rtimes G=\bigoplus_{g\in G}\cC_g,
\]
where each homogeneous component $\cC_g$ is a copy of $\cC$~\cite{T01,Nik08}. We write $[X,g]$ for the object $X\in\cC$ placed in degree $g$. The tensor product and unit are
\begin{equation}\label{eq:semidirect_product_tensor}
[X,g]\otimes[Y,h]=[X\otimes g_*(Y),gh],
\qquad
\mathbf 1_{\cC\rtimes G}=[\mathbf 1,e],
\end{equation}
with associativity and unit constraints induced from those of $\cC$. When $G$ is finite, $\cC\rtimes G$ is a finite tensor category. For an arbitrary subset $S\subseteq G$, we use the notation
\[
(\cC\rtimes G)_S:=\bigoplus_{s\in S}\cC_s.
\]
If $S$ is a subgroup, this is the tensor subcategory $\cC\rtimes S$. If $S=gK$ is a left coset, then $(\cC\rtimes G)_{gK}$ is naturally a
$((\cC\rtimes G)_{gKg^{-1}},\cC\rtimes K)$-bimodule category under tensor product. This notation will also be used for arbitrary faithfully graded tensor categories.

\subsubsection{$H$-equivariant module categories}

We follow~\cite{Gal11, ENO11} for equivariant module categories and their use in constructing module categories over equivariantizations.

Let $H \subseteq G$ be a subgroup acting on a finite tensor category $\cC$. Given a $\cC$-module category $\cM$, we \emph{twist} the module structure by any element $g\in H$.

For each $g\in H$, let $\cM^g$ be the category with the same underlying abelian category as $\cM$ and with $\cC$-action
\[
X\overline{\otimes}^{\,g}M:=g_*(X)\overline{\otimes}M.
\]
The associativity and unit constraints are those induced from the module structure of $\cM$ and the tensor structure of $g_*$. Since the action on $\cC$ is strict, $(\cM^g)^h=\cM^{gh}$ as $\cC$-module categories.

We now make explicit the monoidal category that records twisted module autoequivalences. Let $\Aut_{\cC}^{H}(\cM)$ be the groupoid whose objects are pairs $(g,F)$, where $g\in H$ and
\[
(F,c^F):\cM\longrightarrow\cM^g
\]
is a $\cC$-module equivalence. There are no morphisms between objects of different degrees, and morphisms in degree $g$ are $\cC$-module natural isomorphisms. If $(g,F)$ and $(h,E)$ are objects, define
\[
(g,F)\otimes(h,E):=(gh,F\circ E),
\]
where the $\cC$-module constraint on the composite is
\begin{equation}\label{eq:twisted_composite_constraint}
\begin{aligned}
c^{F\circ E}_{X,M}:F(E(X\overline\otimes M))
&\xrightarrow{\ F(c^E_{X,M})\ }
F(h_*(X)\overline\otimes E(M))\\
&\xrightarrow{\ c^F_{h_*(X),E(M)}\ }
(gh)_*(X)\overline\otimes F(E(M)).
\end{aligned}
\end{equation}
With the usual horizontal product of module natural isomorphisms, composition makes this a strict monoidal groupoid with unit $(e,\id_{\cM})$. The degree map
\[
\pi_1:\Aut_{\cC}^{H}(\cM)\longrightarrow\underline H,
\qquad
(g,F)\longmapsto g,
\]
is a strict monoidal functor.

An \emph{$H$-equivariant $\cC$-module category} is a $\cC$-module category $\cM$ together with a strong monoidal section
\[
\rho:\underline H\longrightarrow\Aut_{\cC}^{H}(\cM),
\qquad
\pi_1\circ\rho=\id_{\underline H}.
\]
We work with normalized sections. Explicitly, the data consist of $\cC$-module equivalences
\[
(U_h,c^h):\cM\longrightarrow\cM^h,
\qquad h\in H,
\]
and module natural isomorphisms
\[
\mu_{g,h}:U_gU_h\xrightarrow{\sim}U_{gh}
\]
such that
\begin{equation}\label{eq:equivariant_module_pentagon}
(\mu_{gh,k})_M\circ(\mu_{g,h})_{U_k(M)}
=(\mu_{g,hk})_M\circ U_g((\mu_{h,k})_M)
\end{equation}
for all $g,h,k\in H$ and $M\in\cM$, and such that each $\mu_{g,h}$ is a module natural transformation, equivalently
\begin{equation}\label{eq:equivariant_module_compatibility}
\begin{aligned}
&(\id_{(gh)_*(X)}\overline\otimes(\mu_{g,h})_M)
\circ c^g_{h_*(X),U_h(M)}\circ U_g(c^h_{X,M})\\
&\hspace{4cm}=
 c^{gh}_{X,M}\circ(\mu_{g,h})_{X\overline\otimes M}
\end{aligned}
\end{equation}
for all $X\in\cC$ and $M\in\cM$. Normalization means
\begin{equation}\label{eq:equivariant_module_normalization}
U_e=\id_{\cM},\qquad c^e=\id,
\qquad \mu_{e,h}=\mu_{h,e}=\id_{U_h}.
\end{equation}
Every strong monoidal section is monoidally isomorphic to a normalized one, so this convention does not change the equivalence classes under consideration.

Under these conventions, normalized $H$-equivariant $\cC$-module structures on $\cM$ are equivalent to $(\cC\rtimes H)$-module structures extending the original $\cC$-action. The correspondence is given explicitly by
\begin{equation}\label{eq:semidirect_product_module_action}
[X,h]\overline\otimes M:=X\overline\otimes U_h(M).
\end{equation}
The associativity constraint for this action is built from $c^h$ and $\mu_{g,h}$; conversely, the action of $[\mathbf1,h]$ recovers $U_h$, and the module associativity recovers $c^h$ and $\mu_{g,h}$.

An $H$-equivariant $\cC$-module category will be called indecomposable when its underlying $\cC$-module category is indecomposable.

\subsection{The equivariant category \texorpdfstring{$\cC^G$}{CG} and its module categories}

Given a $G$-action on a tensor category $\cC$, one forms the \emph{equivariant category} \texorpdfstring{$\cC^G$}{CG}.

Its objects are pairs $(X, u)$ where $X \in \cC$ and $u = \{u_g: g_*(X) \to X\}_{g \in G}$ is a collection of isomorphisms satisfying $u_{gh} = u_g \circ g_*(u_h)$ for all $g, h \in G$ and $u_e = \id_X$. The tensor product is $(X, u) \otimes (Y, v) = (X \otimes Y, w)$, where $w_g = u_g \otimes v_g$, and the unit object is $(\mathbf{1}, \id_{\mathbf{1}})$.

By~\cite[Section~4.15 and Example~7.12.25]{EGNO15}, if $\cC$ is a finite tensor category with a $G$-action, then $\cC^G$ is also a finite tensor category.

Let $H \subseteq G$ be a subgroup, and let $\cM$ be an $H$-equivariant $\cC$-module category with structure $\rho: \underline{H} \to \Aut_{\cC}^{H}(\cM)$. Forgetting the module functor structures, the functors $U_h$ define an action of $H$ on the underlying abelian category $\cM$. One can therefore form the equivariant category $\cM^H$, and the tensor category $\cC^G$ acts on it.

The category $\cM^H$ is defined analogously to $\cC^G$: objects are pairs $(M, v)$ where $M \in \cM$ and $v = \{v_h: U_h(M) \to M\}_{h \in H}$ satisfy $v_e=\id_M$ and the equivariance condition $v_{h_1h_2} \circ (\mu_{h_1,h_2})_M = v_{h_1} \circ U_{h_1}(v_{h_2})$ for all $h_1,h_2 \in H$. Morphisms are defined similarly to preserve the equivariant structure.

It follows from~\cite[Lemma 3.3]{GM11} that $\cM^H$ is a left $\cC^G$-module category. The action of $(X,u) \in \cC^G$ on $(M,v) \in \cM^H$ is given by $(X,u) \overline{\otimes} (M,v) = (X \overline{\otimes} M, \widetilde{v})$, where $\widetilde{v}_h = (u_h \overline{\otimes} v_h) \circ c^h_{X,M}$ for $h \in H$.

We use the following description of exact module categories over equivariantizations, due to Galindo--Mombelli~\cite[Proposition~3.4]{GM11}.

\begin{proposition}\label{prop:mod_cat_equivariantization}
Let $G$ be a finite group acting on a finite tensor category $\cC$. Then the following hold.
\begin{enumerate}
\item If $\cM$ is an $H$-equivariant $\cC$-module category, then $\cM^H$ is an exact $\cC^G$-module category if and only if $\cM$ is an exact $\cC$-module category.
\item If $\cN$ is an indecomposable exact module category over $\cC^G$, then there exists a subgroup $H \subseteq G$ and an $H$-equivariant $\cC$-module category $\cM$, whose underlying $\cC$-module category is indecomposable and exact, such that $\cN \simeq \cM^H$.
\end{enumerate}
\end{proposition}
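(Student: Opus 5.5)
The plan is to deduce both parts from Morita duality between $\cC^G$ and the semidirect product $\cC\rtimes G$, using the dictionary between $H$-equivariant $\cC$-module categories and $(\cC\rtimes H)$-module categories given by \eqref{eq:semidirect_product_module_action}.

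\textbf{Step 1 (Morita duality).} Regard $\cC$ as a left $(\cC\rtimes G)$-module category via $[X,g]\overline\otimes Y=X\otimes g_*(Y)$. This module category is indecomposable and exact. An endofunctor in $\Fun_{\cC\rtimes G}(\cC,\cC)$ is determined by its value on $\mathbf 1$. The $[\mathbf 1,g]$-components of its module constraint are isomorphisms $g_*(X)\to X$, and the coherence conditions on the constraint are exactly the equivariance conditions. Hence
\[
(\cC\rtimes G)^*_{\cC}\simeq\cC^G,
\]
and the same argument gives $(\cC\rtimes H)^*_{\cC}\simeq\cC^H$. For any $(\cC\rtimes H)$-module category $\cM$, the same computation gives an equivalence
\[
\Fun_{\cC\rtimes H}(\cC,\cM)\simeq\cM^H,\qquad F\mapsto \bigl(F(\mathbf 1),\,v\bigr),
\]
where $v$ is read off from the $[\mathbf 1,h]$-constraints of $F$. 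One then checks that, for $\cM=\operatorname{Ind}_H^G\cM:=(\cC\rtimes G)\boxtimes_{\cC\rtimes H}\cM$, this gives
\[
\Fun_{\cC\rtimes G}(\cC,\operatorname{Ind}_H^G\cM)\simeq\Fun_{\cC\rtimes H}(\cC,\cM)\simeq\cM^H
\]
as $\cC^G$-module categories. The $\cC^G$-action on the left is precomposition, and it corresponds to the action of~\cite[Lemma~3.3]{GM11}.

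\textbf{Step 2 (exactness, part (1)).} Projectives of $\cC\rtimes H$ are direct sums of objects $[P,h]$ with $P$ projective in $\cC$. Since
\[
[P,h]\overline\otimes M=P\overline\otimes U_h(M),
\]
$\cM$ is exact over $\cC\rtimes H$ if and only if it is exact over $\cC$. Induction along the faithful grading preserves exactness, because $\operatorname{Ind}_H^G\cM$ restricted to $\cC$ is a direct sum of twists $\cM^{g}$ over coset representatives $g$. By~\cite{EO04}, $\cK\mapsto\Fun_{\cC\rtimes G}(\cC,\cK)$ is a bijection between exact module categories over $\cC\rtimes G$ and over its dual; it preserves and reflects exactness. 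Combining these facts with Step 1 gives part~(1).

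\textbf{Step 3 (part (2)).} Let $\cN$ be an indecomposable exact $\cC^G$-module category. By the double-dual statement, $\cK:=\Fun_{\cC^G}(\cC,\cN)$ is an indecomposable exact $(\cC\rtimes G)$-module category with $\cN\simeq\Fun_{\cC\rtimes G}(\cC,\cK)$. Restrict $\cK$ to $\cC$ and decompose it into indecomposable $\cC$-module summands. The functors $[\mathbf 1,g]\overline\otimes-$ permute these summands, and by indecomposability over $\cC\rtimes G$ the permutation is transitive. Fix one summand $\cM$ and let $H$ be its stabilizer. Then $\cM$ is a $(\cC\rtimes H)$-submodule, hence an $H$-equivariant $\cC$-module category, and it is indecomposable and exact over $\cC$. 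Moreover, the natural functor $\operatorname{Ind}_H^G\cM\to\cK$ is an equivalence, since it is an isomorphism componentwise over $G/H$. Step 1 then gives $\cN\simeq\cM^H$.

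\textbf{Main obstacle.} The expected main obstacle is the bookkeeping in Step 1: verifying that the identification $\Fun_{\cC\rtimes H}(\cC,\cM)\simeq\cM^H$ carries the precomposition action of $\cC^G$ to the explicit action $\widetilde v_h=(u_h\overline\otimes v_h)\circ c^h_{X,M}$, with all coherence data $c^h,\mu_{g,h}$ included. The plan is to do this by writing out the $[\mathbf 1,h]$-constraint of a module functor explicitly and comparing it term by term with \eqref{eq:equivariant_module_pentagon} and \eqref{eq:equivariant_module_compatibility}.
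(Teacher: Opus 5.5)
Your proposal is correct and is essentially the argument behind the paper's citation of \cite[Proposition~3.4]{GM11}: Morita duality $(\cC\rtimes G)^*_{\cC}\simeq\cC^G$ in the form of Tambara's correspondence, the identification $\Fun_{\cC\rtimes G}(\cC,\Ind_H^G\cM)\simeq\cM^H$, and Clifford-theoretic induction from $\cC\rtimes H$, which is also the mechanism the paper uses in the proof of Theorem~\ref{thm:conjugacy_equivariantization}. The one step that needs more than the citation you give is ``reflects exactness'' in Step~2, because the duality of \cite{EO04} is a 2-equivalence between \emph{exact} module categories only; it is easy to supply directly, since the forgetful functor $\cM^H\to\cM$ preserves projectives (its right adjoint $M\mapsto\bigoplus_{h\in H}U_h(M)$ is exact) and $P\overline\otimes M$ is a direct summand of the image of $\bigl(\bigoplus_{g\in G}g_*(P)\bigr)\overline\otimes\bigl(\bigoplus_{h\in H}U_h(M)\bigr)$, whose first factor is projective in $\cC^G$.
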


\subsubsection{Equivalence classes of equivariant module categories}

We record when two equivariant $\cC$-module categories produce equivalent $\cC^G$-module categories.

Let $L\subseteq G$, and let $\cM$ and $\cN$ be two $L$-equivariant $\cC$-module categories, with structure functors $U_\ell$ and $U'_\ell$, respectively. Recall that an $L$-equivariant equivalence is a $\cC$-module functor $F:\cM\to \cN$ together with module natural isomorphisms $w^\ell: U'_\ell\circ F\to F\circ U_\ell$ satisfying the coherence condition
\[
w^{\ell m}_M\circ(\mu'_{\ell,m})_{F(M)}
=F((\mu_{\ell,m})_M)\circ w^\ell_{U_m(M)}\circ U'_\ell(w^m_M)
\]
for all $\ell,m\in L$ and $M\in \cM$. In the strict case this reduces to the usual formula $w^{\ell m}_M=w^\ell_{U_m(M)}\circ U'_\ell(w^m_M)$. This convention is compatible with the correspondence between $L$-equivariant $\cC$-module categories and $(\cC\rtimes L)$-module categories.

For $g\in G$, the rule
\[
\Ad_g([V,h])=[g_*(V),ghg^{-1}]
\]
defines a tensor autoequivalence of $\cC\rtimes G$ and restricts to
$\cC\rtimes K\simeq\cC\rtimes gKg^{-1}$. If $\cN$ is a $K$-equivariant
$\cC$-module category and $H=gKg^{-1}$, let ${}^g\cN$ be the
$H$-equivariant module category obtained by pulling the corresponding
$(\cC\rtimes K)$-module structure back along $\Ad_{g^{-1}}$. On the
underlying category,
\[
V\bullet^g N=(g^{-1})_*(V)\triangleright N,
\qquad
U'_h=U_{g^{-1}hg},
\]
with module constraints and coherence maps transported from those of $\cN$.

\begin{definition}\label{def:conjugate_equivariant_module_categories}
Two pairs $(H,\cM)$ and $(K,\cN)$, where $\cM$ is an $H$-equivariant $\cC$-module category and $\cN$ is a $K$-equivariant $\cC$-module category, are called \emph{conjugate} if there exists $g \in G$ such that
\begin{enumerate}
\item $H = gKg^{-1}$, and
\item $\cM$ is equivalent to ${}^g\cN$ as $H$-equivariant $\cC$-module categories.
\end{enumerate}
\end{definition}

The equivalence relation among these equivariantized module categories is the following.

\begin{theorem}\label{thm:conjugacy_equivariantization}
Let $G$ be a finite group acting on a finite tensor category $\cC$. Let $H,K \subseteq G$ be subgroups. Let $\cM$ be an $H$-equivariant $\cC$-module category and let $\cN$ be a $K$-equivariant $\cC$-module category. Assume that the underlying $\cC$-module categories of $\cM$ and $\cN$ are indecomposable and exact. Then the equivariantizations $\cM^H$ and $\cN^K$ are equivalent as $\cC^G$-module categories if and only if $(H,\cM)$ and $(K,\cN)$ are conjugate.
\end{theorem}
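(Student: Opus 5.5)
The natural strategy is to translate everything into module categories over the crossed product $\cC\rtimes G$ and use the Morita equivalence between $\cC^G$ and $\cC\rtimes G$. The key identification is the following. Under the correspondence (\ref{eq:semidirect_product_module_action}), an $H$-equivariant $\cC$-module category $\cM$ is a $(\cC\rtimes H)$-module category. We would then show that
\[
\cM^H\;\simeq\;\Fun_{\cC\rtimes H}(\Vect{}\,,\cM)
\]
as $\cC^G$-module categories, or more usefully that $\cM^H$ corresponds, under the Morita equivalence given by the $(\cC\rtimes G,\cC^G)$-bimodule category $\cC$, to the induced module category
\[
\operatorname{Ind}_H^G\cM:=(\cC\rtimes G)\boxtimes_{\cC\rtimes H}\cM=(\cC\rtimes G)_{G}\boxtimes_{\cC\rtimes H}\cM .
\]
Concretely, the functor $\cN\mapsto\Fun_{\cC\rtimes G}(\cC,\cN)$ is a 2-equivalence between exact $\cC\rtimes G$-module categories and exact $\cC^G$-module categories, since $(\cC\rtimes G)^*_{\cC}\simeq\cC^G$. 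We would check that it sends $\operatorname{Ind}_H^G\cM$ to $\cM^H$ by the adjunction $\Fun_{\cC\rtimes G}(\cC,\operatorname{Ind}\cM)\simeq\Fun_{\cC\rtimes H}(\cC,\cM)$, together with the identification of the latter with $\cM^H$. The underlying category is $\cC$ viewed as a $\cC\rtimes H$-module, and a module functor out of it is determined by the image of $\mathbf 1$ together with $H$-equivariance data.

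Granting this, the statement reduces to classifying indecomposable exact $\cC\rtimes G$-module categories up to equivalence. This is Clifford theory for the $G$-graded category $\cC\rtimes G$, and it proceeds in two steps.

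For the ``if'' direction, suppose $\cM\simeq{}^g\cN$ as $H$-equivariant categories. Then $\operatorname{Ind}_H^G\cM\simeq\operatorname{Ind}_K^G\cN$ via the functor $[V,x]\boxtimes N\mapsto[V,x]\otimes[\mathbf 1,g]\boxtimes N$, using $\Ad_g$. This is a direct check, and it could equally be done directly at the level of $\cM^H\to\cN^K$ by $(M,v)\mapsto(M,v')$ with $v'_k=v_{gkg^{-1}}$ suitably transported.

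For the ``only if'' direction, let $\cP=\operatorname{Ind}_H^G\cM\simeq\operatorname{Ind}_K^G\cN$. Restricting to $\cC$, we get
\[
\cP\simeq\bigoplus_{xH\in G/H}{}^{x}\!\cM
\]
as $\cC$-module categories, where the summand $(\cC\rtimes G)_{xH}\boxtimes_{\cC\rtimes H}\cM$ is $\cM^{x^{-1}}$ as a $\cC$-module. Since $\cM$ is indecomposable over $\cC$, these are exactly the indecomposable $\cC$-summands, and the stabilizer in $G$ of the summand $\cM$ (under the permutation action of the $[\mathbf 1,g]$) is $H$, with restricted $\cC\rtimes H$-structure recovering $\cM$. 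The same holds for $\cN$ and $K$. An equivalence $\Phi$ of $\cC\rtimes G$-modules permutes the indecomposable $\cC$-summands, so it sends the summand $\cN$ to some summand $[\mathbf 1,g]\cdot\cM$. Comparing stabilizers gives $gKg^{-1}=H$ after adjusting which is which, and $\Phi$ restricted to that summand, being compatible with the action of $\cC\rtimes K$, gives the equivariant equivalence $\cM\simeq{}^g\cN$.

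The main obstacle is the uniqueness of decomposition into indecomposable $\cC$-summands used above. Indecomposable summands of a finite module category are the blocks of the relation ``$\Hom(X\overline\otimes M,N)\neq0$ for some $X$'', so the decomposition is canonical and any module equivalence permutes the summands. A secondary obstacle is carefully tracking the coherence data (the $\mu$'s and the $w$'s) in the identification of $\Fun_{\cC\rtimes H}(\cC,\cM)$ with $\cM^H$. I would handle this by computing with the explicit tensor product (\ref{eq:semidirect_product_tensor}) and the normalized sections.
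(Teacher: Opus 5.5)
Your proposal is correct and follows the paper's route. You pass to $\cD=\cC\rtimes G$-module categories, reduce $\cM^H\simeq\cN^K$ to an equivalence $\Ind_H^G\cM\simeq\Ind_K^G\cN$ of induced modules, and then apply the Clifford-theoretic equivalence criterion for the graded category $\cD$, identifying $\cD_{gH}\boxtimes_{\cD_H}\cM$ with ${}^g\cM$. The only differences are the following.

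\emph{Clifford step.} The paper cites \cite[Proposition~5.12]{MM20} for type-1 pairs. You prove that criterion by hand: you use the canonical decomposition of the restriction to $\cC$ into summands indexed by $G/H$, compare stabilizers of those summand subcategories, and restrict the equivalence to one summand.

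\emph{First reduction.} The paper cites \cite{GM11} and Tambara's correspondence; you justify it by Morita theory. Note that $\Fun_{\cD}(\cC,\Ind_H^G\cM)\simeq\Fun_{\cD_H}(\cC,\cM)$ is not the usual Frobenius adjunction, which has $\Ind$ in the source. It is the coinduction adjunction combined with $\Ind\simeq\mathrm{CoInd}$, which holds for this finite-index crossed product.
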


\begin{proof}
Let $\cD=\cC\rtimes G$, with its canonical faithful $G$-grading, and use $\cD_S$ for the sum of the homogeneous components indexed by a subset $S\subseteq G$. The $H$- and $K$-equivariant structures make $\cM$ and $\cN$ into $\cD_H$- and $\cD_K$-module categories. By~\cite[Proposition 3.4(1)]{GM11}, they are indecomposable exact as $\cD_H$- and $\cD_K$-module categories, respectively. Their restrictions to the neutral component $\cD_e=\cC$ are the original indecomposable $\cC$-module categories. Hence $(H,\cM)$ and $(K,\cN)$ are type-1 pairs in the terminology of~\cite[Subsection~5.1]{MM20}.

By~\cite[Proposition 3.4(3)]{GM11} and the argument proving~\cite[Proposition 3.4(5)]{GM11} via Tambara's correspondence~\cite[Theorem 4.1]{T01}, the categories $\cM^H$ and $\cN^K$ are equivalent as $\cC^G$-module categories if and only if the induced $\cD$-module categories
\begin{align*}
\Ind_{\cD_H}^{\cD}(\cM) &:=
\cD\boxtimes_{\cD_H}\cM, \\
\Ind_{\cD_K}^{\cD}(\cN) &:=
\cD\boxtimes_{\cD_K}\cN
\end{align*}
are equivalent.

By~\cite[Definition~5.10 and Proposition~5.12]{MM20}, applied to the $G$-graded tensor category $\cD$, these induced module categories are equivalent if and only if there exists $g\in G$ such that:
\begin{enumerate}
\item $H = gKg^{-1}$, and
\item $\cD_{gK}\boxtimes_{\cD_K}\cN \simeq \cM$ as $\cD_H$-module categories.
\end{enumerate}
The functor
\[
\Phi_g:\cD_{gK}\longrightarrow\cD_K,
\qquad
[V,gk]\longmapsto[(g^{-1})_*(V),k],
\]
induced by left tensoring with $[\mathbf{1},g^{-1}]$, is an equivalence of $(\cD_H,\cD_K)$-bimodule categories, where the left $\cD_H$-action on $\cD_K$ is transported along $\Ad_{g^{-1}}:\cD_H\to\cD_K$. Consequently, the relative tensor product is $\cN$ with its $\cD_H$-action pulled back along $\Ad_{g^{-1}}$, and hence
\[
\cD_{gK}\boxtimes_{\cD_K}\cN\simeq{}^g\cN
\]
as $\cD_H$-module categories. By construction, the right-hand side is precisely the $H$-equivariant $\cC$-module category introduced above.
Thus, by~\cite[Def.~5.10, Prop.~5.12]{MM20}, this equivalence relation is precisely the conjugacy condition above.
\end{proof}

\section{Fiber functors for \texorpdfstring{$\cC^G$}{CG}}\label{sec:fiber-functors}

\subsection{The correspondence between fiber functors and rank-one module categories}

A \emph{fiber functor} on a finite tensor category $\cC$ is an exact faithful tensor functor $F:\cC\to\VecCat_{\ku}$. We use the monoidal constraints
\[
F_2:F(X)\otimes F(Y)\xrightarrow{\sim}F(X\otimes Y),
\qquad
F_0:\ku\xrightarrow{\sim}F(\mathbf1),
\]
and identify fiber functors that are monoidally naturally isomorphic. A $\cC$-module category has \emph{rank one} if its underlying abelian category is equivalent to $\VecCat_{\ku}$.

\begin{proposition}\label{prop:fiber_rank_one}
There is a bijection between equivalence classes of fiber functors on $\cC$ and equivalence classes of exact $\cC$-module categories of rank one.
\end{proposition}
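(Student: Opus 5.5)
We construct maps in both directions and check that they are mutually inverse up to equivalence; this is essentially \cite[Proposition~7.3.3 and Example~7.4.6]{EGNO15}.

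\emph{From fiber functors to module categories.} Given a fiber functor $(F,F_2,F_0)$, let $\cC$ act on $\VecCat_{\ku}$ by $X\overline\otimes V:=F(X)\otimes V$. The associativity constraint is $F_2\otimes\id_V$ (inverted as needed), and the unit constraint comes from $F_0$. The pentagon and triangle axioms follow from the hexagon-type coherence of $F_2$ and the unit axioms for $F_0$. Exactness: if $P$ is projective in $\cC$, then $P\overline\otimes V$ is an object of $\VecCat_{\ku}$, and every object there is projective. So this is an exact module category of rank one. A monoidal natural isomorphism $F\cong F'$ gives, componentwise, a module equivalence whose underlying functor is the identity.

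\emph{From module categories to fiber functors.} Let $\cM$ be an exact module category of rank one. Choose an equivalence $\cM\simeq\VecCat_{\ku}$ and let $\ku$ denote the simple object. Define $F(X):=\Hom_{\cM}(\ku,X\overline\otimes\ku)$. Since $\cM$ is semisimple with one simple object, $X\overline\otimes\ku\cong F(X)\otimes\ku$, naturally in $X$.
\begin{itemize}
\item $F$ is exact, because $\overline\otimes$ is exact in $X$ and $\Hom(\ku,-)$ is exact on the semisimple category $\cM$.
\item $F$ is faithful, because it is a nonzero exact functor out of a tensor category with simple unit. Concretely, if $F(X)=0$, then $X^*\otimes X\overline\otimes\ku=0$, which forces $\mathbf 1\overline\otimes\ku=0$, a contradiction since $X^*\otimes X$ contains $\mathbf 1$ via coevaluation and evaluation.
\item The constraint $F_2$ is the composite
\[
F(X)\otimes F(Y)\to\Hom(\ku,X\overline\otimes(Y\overline\otimes\ku))\to\Hom(\ku,(X\otimes Y)\overline\otimes\ku),
\]
where the first map is composition and the second uses the module associativity. $F_0$ comes from the unit constraint.
\item Monoidal coherence of $(F,F_2,F_0)$ is exactly the module pentagon and triangle.
\end{itemize}
Changing the chosen equivalence or the module category within its equivalence class changes $F$ only by a monoidal natural isomorphism: a module equivalence between rank-one categories is, after identification, tensoring with a line.

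\emph{Mutual inverses.} Starting from $F$, the module category above recovers $\Hom(\ku,F(X)\otimes\ku)\cong F(X)$ compatibly with $F_2$. Conversely, $\cM$ is recovered up to module equivalence via the isomorphism $X\overline\otimes\ku\cong F(X)\otimes\ku$ constructed above.

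\emph{Main obstacle.} The step needing care is the bookkeeping of constraints: checking that the natural isomorphism $X\overline\otimes\ku\cong F(X)\otimes\ku$ intertwines the module associativity with $F_2$. This is a routine diagram chase once the identification is fixed. Faithfulness is the only substantive point, and it follows from rigidity as indicated above.
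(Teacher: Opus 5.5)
Your proposal is correct and follows the same route as the paper. The paper cites the EGNO correspondence between $\cC$-module structures on $\VecCat_{\ku}$ and tensor functors $\cC\to\VecCat_{\ku}$, and notes exactness, faithfulness, and the match between module equivalences and monoidal natural isomorphisms; you construct both directions explicitly. One wording point: a nonzero exact functor out of a tensor category need not be faithful in general, so faithfulness comes from your concrete argument ($\mathbf 1$ is a quotient of $X^*\otimes X$ via evaluation, and the action is exact), not from the slogan that precedes it.
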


\begin{proof}
Module structures on $\VecCat_{\ku}$ correspond to tensor functors $\cC\to\VecCat_{\ku}$~\cite[Proposition~7.3.3 and Example~7.4.6]{EGNO15}. Such functors are exact and faithful, and the module category is exact because $\VecCat_{\ku}$ is semisimple. Transport along an equivalence with $\VecCat_{\ku}$ gives the converse, while module equivalences correspond to monoidal natural isomorphisms.
\end{proof}

\subsection{Rank-one equivariantizations}

We next characterize when $\cM^H$ has rank one. Since $H$ is finite and
$\operatorname{char}(\ku)=0$, Tambara's theorem gives
\[
\cM^H\text{ semisimple}\quad\Longleftrightarrow\quad \cM\text{ semisimple}
\]
\cite[Proposition~5.2]{T01}. We therefore assume that $\cM$ is semisimple.

\subsubsection{Action on simple objects and obstruction cocycles}

Let $H$ be a finite group acting on a semisimple $\ku$-linear category $\cM$ via autoequivalences $\{U_h\}_{h \in H}$ and normalized coherence isomorphisms $\mu_{h_1,h_2}:U_{h_1}U_{h_2}\xrightarrow{\sim}U_{h_1h_2}$. The group $H$ acts naturally on the set $\Irr(\cM)$ of isomorphism classes of simple objects by $h \cdot [X] := [U_h(X)]$.

For a simple object $X \in \cM$, we denote by $\St_H(X) = \{h \in H \mid U_h(X) \cong X\}$ its \emph{stabilizer subgroup}. For each $h \in \St_H(X)$, we choose an isomorphism $f_h: U_h(X) \xrightarrow{\sim} X$ with $f_e = \id_X$.

\begin{definition}\label{def:obstruction_cocycle}
The \emph{obstruction 2-cocycle} $\alpha_X: \St_H(X) \times \St_H(X) \to \ku^\times$ is defined by
\begin{equation}
    \alpha_X(h_1, h_2) \cdot \id_X =
    f_{h_1} \circ U_{h_1}(f_{h_2}) \circ
    (\mu_{h_1,h_2})_X^{-1}\circ f_{h_1 h_2}^{-1}.
\end{equation}
The right-hand side is an endomorphism of the simple object $X$, hence a scalar multiple of $\id_X$. The cocycle is normalized because $f_e=\id_X$ and the equivariant structure is normalized. When the action is strict, this reduces to the formula with $\mu_{h_1,h_2}=\id$.
\end{definition}

\begin{proposition}\label{prop:obstruction_cocycle_properties}
The map $\alpha_X$ is a 2-cocycle in $Z^2(\St_H(X), \ku^\times)$. Its cohomology class $[\alpha_X] \in H^2(\St_H(X), \ku^\times)$ is independent of the choice of isomorphisms $\{f_h\}$. If $Y$ lies in the $H$-orbit of $X$, then the classes $[\alpha_X]$ and $[\alpha_Y]$ correspond under the conjugation identification of their stabilizers.
\end{proposition}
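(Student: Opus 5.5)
The plan is to treat each of the three assertions by a direct computation with the defining formula
\[
\alpha_X(h_1,h_2)\,\id_X=f_{h_1}\circ U_{h_1}(f_{h_2})\circ(\mu_{h_1,h_2})_X^{-1}\circ f_{h_1h_2}^{-1}.
\]
Only two inputs are needed: the coherence identity \eqref{eq:equivariant_module_pentagon} for $\mu$, and the fact that endomorphisms of a simple object are scalars. The latter holds because $\ku$ is algebraically closed and $\cM$ is semisimple. Throughout I use that scalars commute with every morphism and are preserved by the $\ku$-linear functors $U_h$: for $\lambda\in\ku$ one has $U_h(\lambda\,\id_X)=\lambda\,\id_{U_h(X)}$.

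\emph{Cocycle identity.} Fix $h_1,h_2,h_3\in\St_H(X)$ and rewrite the composite
\[
f_{h_1}\circ U_{h_1}(f_{h_2})\circ U_{h_1}U_{h_2}(f_{h_3}):\ U_{h_1}U_{h_2}U_{h_3}(X)\to X
\]
in two ways.
\begin{itemize}
\item First group $h_2,h_3$. Apply $U_{h_1}$ to the defining relation for $\alpha_X(h_2,h_3)$, which expresses $f_{h_2}\circ U_{h_2}(f_{h_3})$ as $\alpha_X(h_2,h_3)\,f_{h_2h_3}\circ(\mu_{h_2,h_3})_X$. Then use the relation for $\alpha_X(h_1,h_2h_3)$.
\item Alternatively group $h_1,h_2$. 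Use naturality of $\mu_{h_1,h_2}$ to move $U_{h_1}U_{h_2}(f_{h_3})$ past $(\mu_{h_1,h_2})$, turning it into $U_{h_1h_2}(f_{h_3})$. Then use the relations for $\alpha_X(h_1,h_2)$ and $\alpha_X(h_1h_2,h_3)$.
\end{itemize}
The first route yields $\alpha_X(h_2,h_3)\alpha_X(h_1,h_2h_3)\,f_{h_1h_2h_3}\circ(\mu_{h_1,h_2h_3})_X\circ U_{h_1}((\mu_{h_2,h_3})_X)$. The second yields $\alpha_X(h_1,h_2)\alpha_X(h_1h_2,h_3)\,f_{h_1h_2h_3}\circ(\mu_{h_1h_2,h_3})_X\circ(\mu_{h_1,h_2})_{U_{h_3}X}$. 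By \eqref{eq:equivariant_module_pentagon} the two $\mu$-composites agree and are invertible, so the scalars coincide; this is exactly the $2$-cocycle condition. Normalization follows from $f_e=\id$ and \eqref{eq:equivariant_module_normalization}. I expect this bookkeeping, keeping the naturality square for $\mu$ and the placement of $U_{h_1}$ straight, to be the only real obstacle.

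\emph{Independence of choices.} Any other choice has the form $f'_h=\lambda_h f_h$ with $\lambda_h\in\ku^\times$ and $\lambda_e=1$. Substituting, linearity of $U_{h_1}$ gives
\[
\alpha'_X(h_1,h_2)=\frac{\lambda_{h_1}\lambda_{h_2}}{\lambda_{h_1h_2}}\,\alpha_X(h_1,h_2),
\]
so $\alpha'_X$ differs from $\alpha_X$ by a coboundary. Replacing $X$ by an isomorphic object $X'$ via $\phi:X\to X'$ is handled the same way, by taking $f'_h=\phi f_h U_h(\phi)^{-1}$ and using naturality of $\mu$; this gives the same cocycle exactly.

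\emph{Orbit invariance.} Let $Y=U_g(X)$, which we may assume after the previous step. Then $\St_H(Y)=g\St_H(X)g^{-1}$. For $k\in\St_H(X)$, define $f^Y_{gkg^{-1}}:U_{gkg^{-1}}(U_g X)\to U_gX$ as the composite
\[
U_{gkg^{-1}}U_g(X)\xrightarrow{\ \mu\ }U_{gk}(X)\xrightarrow{\ \mu^{-1}\ }U_gU_k(X)\xrightarrow{\ U_g(f_k)\ }U_g(X).
\]
Computing $\alpha_Y(gk_1g^{-1},gk_2g^{-1})$ with these choices, the $\mu$'s cancel by repeated use of \eqref{eq:equivariant_module_pentagon} and naturality. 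The $U_g$ factors out by linearity, leaving $U_g(\alpha_X(k_1,k_2)\id_X)=\alpha_X(k_1,k_2)\id_Y$. Hence $\alpha_Y\circ(c_g\times c_g)=\alpha_X$ on the nose for these choices, where $c_g$ is conjugation by $g$, and so the classes correspond. If this direct cancellation becomes unwieldy, I would first replace the action by a strict one: since $H$-actions on a category can be strictified by an $H$-equivariant equivalence, which preserves the classes by the isomorphism-invariance just shown, the computation reduces to the case $\mu=\id$, where it is immediate.
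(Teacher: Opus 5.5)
Your proposal is correct and follows essentially the same route as the paper. You derive the cocycle identity by comparing the two groupings of the composite $U_{h_1}U_{h_2}U_{h_3}(X)\to X$ via naturality and the pentagon identity \eqref{eq:equivariant_module_pentagon}, and you get independence from the rescaling $f'_h=\lambda_h f_h$. Orbit invariance uses exactly the paper's transported isomorphisms $U_g(f_k)\circ(\mu_{g,k})_X^{-1}\circ(\mu_{gkg^{-1},g})_X$, with the isomorphic-object case handled by conjugating by a chosen isomorphism. Your write-up simply makes explicit the bookkeeping that the paper leaves implicit.
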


\begin{proof}
The defining equation can be written as
\begin{equation}\label{eq:obstruction_factor_set}
f_g\circ U_g(f_h)\circ(\mu_{g,h})_X^{-1}
=\alpha_X(g,h)f_{gh}
\end{equation}
for $g,h\in\St_H(X)$. Compare the two composites from
$U_gU_hU_k(X)$ to $X$ obtained by first applying
\eqref{eq:obstruction_factor_set} to $(g,h)$ or to $(h,k)$. The pentagon identity
\eqref{eq:equivariant_module_pentagon} gives
\[
\alpha_X(g,h)\alpha_X(gh,k)
=\alpha_X(g,hk)\alpha_X(h,k),
\]
and normalization gives $\alpha_X(e,h)=\alpha_X(h,e)=1$. Thus
$\alpha_X\in Z^2(\St_H(X),\ku^\times)$.

If $f'_h=\lambda_hf_h$, where $\lambda_h\in\ku^\times$ and
$\lambda_e=1$, then substitution in Definition~\ref{def:obstruction_cocycle} yields
\[
\alpha'_X(g,h)
=\frac{\lambda_g\lambda_h}{\lambda_{gh}}\,\alpha_X(g,h).
\]
Hence $\alpha'_X$ and $\alpha_X$ differ by a coboundary.

Finally, suppose first that $Y=U_a(X)$. Conjugation
$\iota_a(s)=asa^{-1}$ identifies $\St_H(X)$ with $\St_H(Y)$. For
$s\in\St_H(X)$ define
\begin{equation}\label{eq:transported_stabilizer_isomorphism}
\widetilde f_{\iota_a(s)}
:=U_a(f_s)\circ(\mu_{a,s})_X^{-1}
\circ(\mu_{\iota_a(s),a})_X:
U_{\iota_a(s)}(Y)\longrightarrow Y.
\end{equation}
A second application of the pentagon identity shows that, for these transported choices,
\[
\alpha_Y(asa^{-1},ata^{-1})=\alpha_X(s,t)
\qquad(s,t\in\St_H(X)).
\]
If $Y$ is only isomorphic to $U_a(X)$, conjugating the maps in
\eqref{eq:transported_stabilizer_isomorphism} by a chosen isomorphism
$U_a(X)\xrightarrow{\sim}Y$ changes neither conclusion; a different choice changes the cocycle only by a coboundary. Therefore the two cohomology classes correspond under conjugation.
\end{proof}

Recall that a cohomology class $[\alpha] \in H^2(G, \ku^\times)$ is called \emph{nondegenerate} if the corresponding twisted group algebra $\ku_\alpha[G]$ is simple.

We also use the description of simple objects in equivariant semisimple categories, in the form recalled in~\cite[Subsection~7.5.1]{GP17}.

\begin{proposition}\label{prop:structure_MH}
Let $H$ be a finite group acting on a finite semisimple $\ku$-linear category $\cM$. If $\{X_1, \ldots, X_m\}$ is a set of representatives of the $H$-orbits of simple objects in $\cM$, then there is a bijective correspondence between simple objects in $\cM^H$ and pairs $(i, V)$ where $1 \leq i \leq m$ and $V$ is a simple right module over the twisted group algebra $\ku_{\alpha_{X_i}}[\St_H(X_i)]$.
\end{proposition}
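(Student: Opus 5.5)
The plan is to reduce to a single orbit by an explicit direct-sum decomposition of $\cM^H$. Write $\cM=\bigoplus_{i=1}^m\cM_i$, where $\cM_i$ is the full semisimple subcategory spanned by the simples in the $H$-orbit of $X_i$. Each $U_h$ permutes the simples, so it preserves every $\cM_i$. The equivariant structure maps $v_h$ are isomorphisms $U_h(M)\to M$, and therefore respect this decomposition. Hence $\cM^H\simeq\bigoplus_i\cM_i^H$. It then suffices to show that the simples of $\cM_i^H$ correspond to simple right $\ku_{\alpha_{X_i}}[\St_H(X_i)]$-modules.

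Fix one orbit with representative $X$, and set $S=\St_H(X)$. Choose the $f_s$ as in Definition~\ref{def:obstruction_cocycle}, and let $\alpha=\alpha_X$, which is a cocycle by Proposition~\ref{prop:obstruction_cocycle_properties}. I would build a functor
\[
\Phi:\cM_X^H\longrightarrow \Mod\text{-}\ku_\alpha[S],\qquad (M,v)\longmapsto \Hom_\cM(X,M).
\]
The element $s$ acts on $\phi\in\Hom(X,M)$ by
\[
\phi\cdot s:=v_s\circ U_s(\phi)\circ f_s^{-1}.
\]
Using \eqref{eq:obstruction_factor_set} and the equivariance condition on $v$, one checks that $(\phi\cdot s)\cdot t=\alpha(s,t)^{\pm1}\,\phi\cdot(st)$. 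The exponent and side conventions may need to be fixed by passing to $\alpha^{-1}$ or by using right rather than left modules; I expect the bookkeeping with $\mu$ to be the main computational obstacle, though not a conceptual one.

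Next, I would construct a quasi-inverse by induction. Let $t_1,\dots,t_r$ be coset representatives of $H/S$. Given a right $\ku_\alpha[S]$-module $V$, set
\[
\Ind(V)=\bigoplus_j U_{t_j}(X)\otimes V^{*}
\]
or an equivalent formula, with the equivariant structure assembled from the $\mu$'s, the $f$'s and the action on $V$. This is equivalent to the standard fact that equivariantization of the orbit category is modules over the twisted group algebra of the stabilizer. A cleaner route may be to identify $\cM_X$ with $\Vect{\ku}^{H/S}$ carrying the induced $H$-action, and to use that equivariantization of $\Vect{\ku}^{H/S}$ with a 2-cocycle twist is equivalent to $\Rep(\ku_\alpha[S])$, by an inductive/restriction Morita argument.

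Alternatively, I would avoid constructing $\Ind$ explicitly and argue through endomorphism algebras. The forgetful functor $\cM^H\to\cM$ has a two-sided adjoint, the induction $I(M)=\bigoplus_h U_h(M)$. Every simple of $\cM_X^H$ is therefore a summand of $I(X)$. One then computes
\[
\operatorname{End}_{\cM^H}(I(X))\cong\Hom_\cM\Bigl(X,\bigoplus_h U_h(X)\Bigr),
\]
which is spanned by the $f_s^{-1}$ for $s\in S$, and whose multiplication is twisted exactly by $\alpha$. This gives $\operatorname{End}(I(X))\cong\ku_{\alpha}[S]$, possibly up to $\alpha\leftrightarrow\alpha^{-1}$ or an opposite algebra; both choices yield equivalent module categories after fixing conventions. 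Since $\cM_X^H$ is semisimple by \cite[Proposition~5.2]{T01} and $I(X)$ generates it, we get $\cM_X^H\simeq\Mod\text{-}\operatorname{End}(I(X))$. Its simple objects are then the simple right modules over this algebra.

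Among these steps, the identification of the multiplication on $\operatorname{End}(I(X))$ with the $\alpha$-twisted product is the crux, and I would verify it directly from \eqref{eq:obstruction_factor_set}.
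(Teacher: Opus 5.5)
Your proof is correct. It differs from the paper mainly because the paper gives no argument of its own: it imports the parametrization of simple equivariant objects from \cite[Subsection~7.5.1]{GP17}.

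Your endomorphism-algebra route is a complete, self-contained replacement, and the convention worry you flag resolves exactly as the statement requires. For $s\in S$, let $E_s$ be the endomorphism of $I(X)$ whose restriction to the summand $X=U_e(X)$ is $f_s^{-1}$ followed by inclusion of the $s$-summand. Equivariance forces its restriction to $U_g(X)$ to be $(\mu_{g,s})_X\circ U_g(f_s^{-1})$, and inverting \eqref{eq:obstruction_factor_set} gives
\[
E_t\circ E_s=\alpha_X(s,t)^{-1}E_{st}.
\]
Hence $\operatorname{End}(I(X))^{\mathrm{op}}=\ku_{\alpha_X^{-1}}[S]$. Using $\ku_\beta[S]^{\mathrm{op}}\cong\ku_{\beta^{-1}}[S]$, this gives $\operatorname{End}(I(X))\cong\ku_{\alpha_X}[S]$. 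So $\Hom_{\cM^H}(I(X),-)$, with right action by precomposition, identifies $\cM_X^H$ with right $\ku_{\alpha_X}[S]$-modules on the nose.

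Your functor $\Phi$ is this same functor transported along the adjunction: your $\phi\cdot s$ is precomposition with $E_s$. That is why you found $(\phi\cdot s)\cdot t=\alpha(t,s)^{-1}\,\phi\cdot(ts)$. So the explicit induced quasi-inverse, the vaguest part of your sketch, can simply be dropped.

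The one step worth writing out is why $I(X)$ generates $\cM_X^H$. A nonzero $(M,v)$ in $\cM_X^H$ contains some $U_h(X)$, and $U_{h^{-1}}(M)\cong M$, so $X$ itself occurs in $M$. Hence $\Hom_{\cM^H}(I(X),(M,v))\cong\Hom_\cM(X,M)\neq0$.

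Compared with the citation, your argument is longer but buys two things. First, it checks that the cocycle governing the stabilizer is precisely $\alpha_X$ of Definition~\ref{def:obstruction_cocycle}, with the non-strict coherence $\mu$ retained; the citation leaves this matching implicit. Second, it yields an equivalence $\cM_X^H\simeq\Mod\text{-}\ku_{\alpha_X}[S]$ rather than only a bijection of simple objects.
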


\begin{corollary}\label{cor:rank_one_MH}
Let $H$ be a finite group acting on a finite semisimple $\ku$-linear category $\cM$. The category $\cM^H$ has rank one if and only if:
\begin{enumerate}
    \item The action of $H$ on $\Irr(\cM)$ is transitive.
    \item For a simple object $X \in \cM$, the obstruction cocycle $[\alpha_X] \in H^2(\St_H(X), \ku^\times)$ is nondegenerate.
\end{enumerate}
\end{corollary}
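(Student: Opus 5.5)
The plan is to read Corollary~\ref{cor:rank_one_MH} off from Proposition~\ref{prop:structure_MH}. The first point to settle is what rank one means here. Since $\cM$ is finite semisimple and $H$ is finite in characteristic zero, $\cM^H$ is again finite semisimple (Tambara's result quoted above). A finite semisimple $\ku$-linear category over algebraically closed $\ku$ is equivalent to $\VecCat_{\ku}$ exactly when it has one isomorphism class of simple objects. So the statement reduces to counting simple objects of $\cM^H$.

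By Proposition~\ref{prop:structure_MH}, the number of simple objects of $\cM^H$ is
\[
\sum_{i=1}^{m}\bigl|\Irr\bigl(\ku_{\alpha_{X_i}}[\St_H(X_i)]\bigr)\bigr|,
\]
where $m$ is the number of $H$-orbits on $\Irr(\cM)$. Each summand is at least $1$, because a twisted group algebra of a finite group is a nonzero finite-dimensional algebra and so has a simple right module. Hence the total is $1$ if and only if two things hold.
\begin{enumerate}
\item We have $m=1$, that is, $H$ acts transitively on $\Irr(\cM)$.
\item The unique twisted group algebra $\ku_{\alpha_X}[\St_H(X)]$ has exactly one simple module up to isomorphism.
\end{enumerate}
I am implicitly assuming $\cM\neq 0$, since otherwise the orbit count is zero. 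This is harmless in context, but I would state it.

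Next I would translate condition (2) into nondegeneracy. By Maschke's theorem for twisted group algebras in characteristic zero, $\ku_\alpha[S]$ is semisimple; the usual averaging argument works verbatim. By Wedderburn, and since $\ku$ is algebraically closed, it is a product of matrix algebras, one factor for each simple module. So it has a unique simple module if and only if it is a single matrix algebra, that is, if and only if it is simple. This is exactly the paper's definition of nondegeneracy of $[\alpha_X]$. The condition depends only on the class: cohomologous cocycles give isomorphic twisted group algebras via $\delta_g\mapsto\lambda_g\delta_g$.

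Finally, the phrase ``for a simple object $X$'' needs to be independent of the choice of $X$. By Proposition~\ref{prop:obstruction_cocycle_properties}, under transitivity the classes attached to different simples correspond under a conjugation isomorphism of stabilizers. Such an isomorphism induces an isomorphism of the twisted group algebras, so nondegeneracy is independent of $X$. There is no real obstacle in this argument. The only points needing care are the semisimplicity of the twisted group algebra and this independence of the choice of simple object, both of which are standard.
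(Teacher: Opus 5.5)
Your proposal is correct and follows essentially the same route as the paper: it counts simple objects of $\cM^H$ via Proposition~\ref{prop:structure_MH} and identifies ``exactly one simple module'' with simplicity of the semisimple twisted group algebra over the algebraically closed field $\ku$. Your added remarks on $\cM\neq 0$, Maschke's theorem for $\ku_\alpha[S]$, and independence of the choice of $X$ are correct details that the paper leaves implicit.
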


\begin{proof}
By Proposition~\ref{prop:structure_MH}, the number of simple objects of $\cM^H$ is
\[
\sum_{\mathcal{O}\in H\backslash \Irr(\cM)}
\#\,\Irr\big(\ku_{\alpha_X}[\St_H(X)]\big),
\]
where $X$ is any representative of the orbit $\mathcal{O}$. This number is one if and only if there is a single $H$-orbit and the twisted group algebra attached to the stabilizer has a single simple module. Since the algebra is finite-dimensional semisimple over the algebraically closed field $\ku$, this is equivalent to the twisted group algebra being simple, which is the nondegeneracy condition for $[\alpha_X]$.
\end{proof}

\subsection{Classification of fiber functors}

\begin{theorem}\label{thm:main_classification}
Let $G$ act on a finite tensor category $\cC$. Fiber functors on $\cC^G$ are in bijection with conjugacy classes of pairs $(H,\cM)$ such that:
\begin{enumerate}
\item $H\leq G$;
\item $\cM$ is an $H$-equivariant $\cC$-module category whose underlying $\cC$-module category is indecomposable, exact, and semisimple;
\item $H$ acts transitively on $\Irr(\cM)$;
\item for some, equivalently every, simple $X\in\cM$, the class $[\alpha_X]\in H^2(\St_H(X),\ku^\times)$ is nondegenerate.
\end{enumerate}
\end{theorem}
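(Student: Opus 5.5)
The plan is to chain together the results already established. First, by Proposition~\ref{prop:fiber_rank_one}, equivalence classes of fiber functors on $\cC^G$ correspond bijectively to equivalence classes of exact $\cC^G$-module categories of rank one. A rank-one module category is equivalent to $\VecCat_{\ku}$ as an abelian category, so it is automatically indecomposable. Proposition~\ref{prop:mod_cat_equivariantization}(2) then gives a subgroup $H\subseteq G$ and an $H$-equivariant $\cC$-module category $\cM$, with indecomposable exact underlying $\cC$-module category, such that the module category is equivalent to $\cM^H$. This yields a surjective assignment from pairs $(H,\cM)$ with $\cM^H$ of rank one onto fiber functors.

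Next, I translate the rank-one condition into conditions (2)--(4). Since $\cM^H\simeq\VecCat_{\ku}$ is semisimple, Tambara's result \cite[Proposition~5.2]{T01} (char $0$, $H$ finite) shows $\cM$ is semisimple. Then Corollary~\ref{cor:rank_one_MH} applies to the underlying $H$-action on the abelian category $\cM$: $\cM^H$ has rank one if and only if $H$ acts transitively on $\Irr(\cM)$ and $[\alpha_X]$ is nondegenerate. The phrase ``some, equivalently every'' follows from Proposition~\ref{prop:obstruction_cocycle_properties}: under transitivity, any two simple objects are related by some $U_a$, and conjugation $\iota_a$ identifies the stabilizers and their cocycle classes. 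Nondegeneracy is preserved because $\iota_a$ induces an algebra isomorphism $\ku_{\alpha_X}[\St_H(X)]\cong\ku_{\alpha_Y}[\St_H(Y)]$. Conversely, given $(H,\cM)$ satisfying (1)--(4), Proposition~\ref{prop:mod_cat_equivariantization}(1) makes $\cM^H$ an exact $\cC^G$-module category, and Corollary~\ref{cor:rank_one_MH} makes it rank one. So every such pair produces a fiber functor.

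Finally, I show the assignment is injective on conjugacy classes by applying Theorem~\ref{thm:conjugacy_equivariantization}. Its hypotheses (indecomposable and exact underlying $\cC$-module categories) are part of condition (2). It says $\cM^H\simeq\cN^K$ as $\cC^G$-module categories exactly when $(H,\cM)$ and $(K,\cN)$ are conjugate. Combined with the identification of module equivalences with monoidal isomorphisms of fiber functors in Proposition~\ref{prop:fiber_rank_one}, this gives the bijection. I should also check that conditions (2)--(4) are invariant under conjugation, so that the classes are well defined. This holds because ${}^g\cN$ has the same underlying abelian category, $H$-action $U_{g^{-1}hg}$, and stabilizers conjugate to those of $\cN$, with matching cocycles.

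The main obstacle is mostly bookkeeping: the assignment must be well defined on equivalence classes in both directions. In particular, equivalent $\cC^G$-module categories of rank one must give the same fiber functor class, which comes from Proposition~\ref{prop:fiber_rank_one}. The invariance of (3)--(4) under equivariant equivalence and conjugation is the only point needing a short additional argument, and I would handle it by transporting the isomorphisms $f_h$ along the equivalence.
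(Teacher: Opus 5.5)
Your proposal is correct and follows the paper's proof essentially step for step. The chain is the same: rank-one exact module categories via Proposition~\ref{prop:fiber_rank_one}, realization as $\cM^H$ via Proposition~\ref{prop:mod_cat_equivariantization}, Tambara's semisimplicity theorem, Corollary~\ref{cor:rank_one_MH} for the rank-one criterion, and Theorem~\ref{thm:conjugacy_equivariantization} for the identification of classes. Your additional remarks on the ``some, equivalently every'' clause (via Proposition~\ref{prop:obstruction_cocycle_properties}) and on conjugation-invariance of the conditions are correct bookkeeping that the paper leaves implicit.
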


\begin{proof}
By Proposition~\ref{prop:fiber_rank_one}, a fiber functor on $\cC^G$ determines an exact $\cC^G$-module category $\cN$ of rank one. In particular, $\cN$ is indecomposable and semisimple. Proposition~\ref{prop:mod_cat_equivariantization} gives a subgroup $H\subseteq G$ and an $H$-equivariant $\cC$-module category $\cM$, with indecomposable exact underlying $\cC$-module category, such that $\cN\simeq\cM^H$. Since $H$ is finite and $|H|$ is invertible in $\ku$, Tambara's semisimplicity theorem~\cite[Proposition~5.2]{T01} implies that $\cM$ is semisimple. Corollary~\ref{cor:rank_one_MH} then gives transitivity of the action on $\Irr(\cM)$ and nondegeneracy of the stabilizer cocycle.

Conversely, let $(H,\cM)$ satisfy the conditions in the statement. Part~(1) of Proposition~\ref{prop:mod_cat_equivariantization} shows that $\cM^H$ is an exact $\cC^G$-module category. Since $\cM$ is semisimple, items~\textup{(3)--(4)} and Corollary~\ref{cor:rank_one_MH} imply that $\cM^H$ has rank one. Proposition~\ref{prop:fiber_rank_one} therefore produces a fiber functor on $\cC^G$.

Finally, Theorem~\ref{thm:conjugacy_equivariantization} says that two such pairs yield equivalent $\cC^G$-module categories, and hence equivalent fiber functors, exactly when they are conjugate. This proves both directions and the asserted bijection.
\end{proof}

\begin{corollary}\label{cor:cohomologically_trivial}
Assume that $H^2(L,\ku^\times)=0$ for every subgroup $L\leq G$. Then fiber functors on $\cC^G$ are classified by conjugacy classes of pairs $(H,\cM)$ satisfying items \textup{(1)--(2)} of Theorem~\ref{thm:main_classification} and such that $H$ acts freely and transitively on $\Irr(\cM)$. This applies in particular when every subgroup of $G$ is cyclic, hence when $G$ is cyclic.
\end{corollary}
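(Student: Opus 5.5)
This corollary should follow directly from Theorem~\ref{thm:main_classification}: under the cohomological hypothesis, condition~(4) of that theorem becomes the condition that the stabilizer is trivial. Fix a pair $(H,\cM)$ satisfying items~(1)--(2), and suppose $H$ acts transitively on $\Irr(\cM)$. Let $X$ be a simple object and $L=\St_H(X)$. Since $L\leq G$, the hypothesis gives $H^2(L,\ku^\times)=0$, so $[\alpha_X]$ is trivial. Then $\ku_{\alpha_X}[L]\cong\ku[L]$ is the ordinary group algebra.

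The key step is to show that the trivial class is nondegenerate exactly when $L$ is the trivial group. The group algebra $\ku[L]$ always has the trivial one-dimensional representation, which gives a surjection $\ku[L]\to\ku$. If $\ku[L]$ is simple, this surjection must be injective, so $\dim\ku[L]=|L|=1$. Conversely, $\ku[\{e\}]=\ku$ is simple. Hence condition~(4) holds if and only if $\St_H(X)=\{e\}$. Since stabilizers along an orbit are conjugate, this holds for one simple object if and only if it holds for all of them. Combined with transitivity, this says exactly that $H$ acts freely and transitively on $\Irr(\cM)$.

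The conjugacy relation is unchanged, so Theorem~\ref{thm:main_classification} gives the stated bijection directly.

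For the final sentence, every subgroup of a cyclic group is cyclic, so it suffices to treat a cyclic group $L=C_n$. For cyclic groups, $H^2(C_n,\ku^\times)\cong\ku^\times/(\ku^\times)^n$, which is trivial because $\ku$ is algebraically closed, so $n$-th roots always exist. I expect no real obstacle here. The only points needing care are the identification of nondegeneracy with simplicity of $\ku[L]$ and the standard computation of $H^2$ for cyclic groups.
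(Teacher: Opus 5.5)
Your proposal is correct and follows essentially the same route as the paper. Triviality of the stabilizer cocycle reduces nondegeneracy to simplicity of the ordinary group algebra $\ku[\St_H(X)]$, which forces trivial stabilizers, and Theorem~\ref{thm:main_classification} then gives the classification. You also supply two details the paper leaves implicit: the trivial-representation argument, and the computation $H^2(C_n,\ku^\times)\cong\ku^\times/(\ku^\times)^n=0$.
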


\begin{proof}
Under the hypothesis, every stabilizer cocycle is cohomologically trivial. Its twisted group algebra is therefore an ordinary group algebra, which is simple over $\ku$ exactly when the stabilizer is trivial. Theorem~\ref{thm:main_classification} gives the result.
\end{proof}

\section{Fiber functors for the Hopf algebras \texorpdfstring{$H_p$}{Hp}}\label{sec:nikshych}

We apply Theorem~\ref{thm:main_classification} to Nikshych's $C_2$-equivariantizations of Tambara--Yamagami categories~\cite{Nik08}.

\subsection{Tambara--Yamagami categories and fiber functors}

For a finite abelian group $A$, a nondegenerate symmetric bicharacter $\chi$, and $\tau^2=|A|^{-1}$, the category $\TY(A,\chi,\tau)$ has simple objects $A\cup\{m\}$ and fusion rules
\[
a\otimes b=a+b,
\qquad
a\otimes m=m\otimes a=m,
\qquad
m\otimes m=\bigoplus_{a\in A}a;
\]
its associator is determined by $(\chi,\tau)$~\cite{TY98}. In particular, $\FPdim(m)=\sqrt{|A|}$ and $\FPdim(\TY(A,\chi,\tau))=2|A|$.

The form $\chi$ is \emph{hyperbolic} if $A=L\times L'$ with $\chi$ trivial on $L\times L$ and on $L'\times L'$; otherwise it is \emph{elliptic}. For $|A|$ odd, Tambara's classification states that $\TY(A,\chi,\tau)$ admits a fiber functor exactly when $\tau^{-1}$ is a positive integer and $\chi$ is hyperbolic. Its fiber functors are then parametrized by the ordered decompositions $(L,L')$ of this type~\cite[Propositions~4.1 and~4.2]{T00}.

\subsection{The Hopf algebras \texorpdfstring{$H_p$}{Hp}}

Let $p$ be an odd prime, $A_p=C_p\times C_p$, and choose a primitive $p$th root of unity $\xi\in\ku$. We identify $\mathbb F_p$-valued bilinear forms with bicharacters by exponentiating with $\xi$, and use the convention
\[
\Alt(\mu)(x,y)=\mu(y,x)\mu(x,y)^{-1},
\qquad
J=\begin{pmatrix}0&1\\-1&0\end{pmatrix}.
\]
Set
\[
\chi_p((x_1,x_2),(y_1,y_2))=\xi^{x_1y_2+y_1x_2},
\qquad
\cC_p=\TY(A_p,\chi_p,p^{-1}).
\]
The only Lagrangian subgroups of $(A_p,\chi_p)$ are
$L_1=C_p\times\{0\}$ and $L_2=\{0\}\times C_p$. Hence $\cC_p$ has two fiber functors, represented by the ordered decompositions $(L_1,L_2)$ and $(L_2,L_1)$.

The involution $t(x,y)=(y,x)$ preserves $\chi_p$ and induces a strict $C_2$-action on $\cC_p$ with $T_t(a)=t(a)$ and $T_t(m)=m$~\cite[Proposition~2.10]{Nik08}. Nikshych's Hopf algebra $H_p$ is characterized by
\[
\Rep(H_p)\simeq\cC_p^{C_2};
\]
see~\cite[Theorem~1]{CM17} for an explicit construction.

\subsection{\texorpdfstring{Classification of fiber functors for $\Rep(H_p)$}{Classification of fiber functors for Rep(Hp)}}

The only subgroups in Theorem~\ref{thm:main_classification} are $J=\{e\}$ and $J=C_2$. The first gives the $C_2$-orbits of fiber functors on $\cC_p$. In the second, Corollary~\ref{cor:cohomologically_trivial} requires an indecomposable rank-two $\cC_p$-module category with a free transitive $C_2$-action on its simples.

\begin{lemma}\label{lem:no_rank_two_invariant_nikshych}
The \(T_t\)-invariant indecomposable rank-two \(\cC_p\)-module categories are as follows. If \(p\equiv3\pmod4\), there are none. If \(p\equiv1\pmod4\), there is exactly one equivalence class.
\end{lemma}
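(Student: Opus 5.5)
We will use Nikshych's description of the rank-two indecomposable module categories over $\cC_p=\TY(A_p,\chi_p,p^{-1})$, as recalled in~\cite{Nik08} (following the general classification of module categories over Tambara--Yamagami categories by Meir--Musicantov). We then determine which of them are fixed by $T_t$.

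\textbf{Step 1: the list of rank-two module categories.} Let $\cM$ be indecomposable of rank two. Its simples are permuted transitively by $A_p\subset\cC_p$, since $m\otimes m=\bigoplus_a a$ and indecomposability force every simple to be reached. Counting Frobenius--Perron dimensions, the stabilizer of a simple is a subgroup $L\subseteq A_p$ of index two. This is impossible for odd $p$. So the invertible objects must fix both simples, $A_p$ acts trivially on $\Irr(\cM)$, and $m$ swaps the two simples. The restriction to $\VecCat_{A_p}$ (with trivial associator) is then $\cM\simeq\VecCat\oplus\VecCat$, each summand being the module category attached to the pair $(A_p,\psi)$ with $\psi\in H^2(A_p,\ku^\times)$. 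Extending the action to $m$ imposes a compatibility condition. Following~\cite{Nik08} (or Meir--Musicantov), this condition reads: the two $2$-cocycles $\psi_1,\psi_2$ on the summands satisfy $\Alt(\psi_1)\,\Alt(\psi_2)^{\pm1}=\chi_p$ up to the symmetry conventions. In addition, the action of $m$ between the two summands carries a further square-root choice governed by $\tau$ and a quadratic refinement. Concretely, I expect the rank-two module categories to be parametrized by alternating forms $\beta$ on $\mathbb F_p^2$, equivalently scalars $\lambda\in\mathbb F_p$ with $\Alt=\xi^{\lambda\det}$, subject to the requirement that the bilinear form $\chi_p\cdot\beta$ (matrix $\begin{pmatrix}0&1\\1&0\end{pmatrix}+\lambda J$) be such that a certain associated quadratic form represents the right value. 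This reduces to a condition of the form $\lambda^2=-1$ or $1+\lambda^2=0$ in $\mathbb F_p$. I will extract the precise equation from~\cite{Nik08}, where exactly this is computed.

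\textbf{Step 2: the $T_t$-action.} The involution $t$ swaps coordinates, so $\det(tx,ty)=-\det(x,y)$, and $t^*$ sends $\lambda\mapsto-\lambda$. It also fixes $\chi_p$ and $m$. Hence $T_t$ acts on the parameter set by $\lambda\mapsto-\lambda$, possibly composed with the swap of the two simples, which is an equivalence identifying $\lambda$ with $-\lambda$ or leaving it fixed depending on conventions. I will check the parametrization up to equivalence carefully, since twisting by the swap of summands is an equivalence of module categories.

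\textbf{Step 3: the count.} The parameter set should be the solutions of $\lambda^2=-1$ in $\mathbb F_p$, taken modulo the equivalence from Step 2. If $p\equiv3\pmod4$ there are no solutions, so there are no rank-two module categories at all, and in particular none that are $T_t$-invariant. If $p\equiv1\pmod4$ there are two solutions $\pm i$. These should form a single equivalence class that is $T_t$-stable, or else two classes interchanged by equivalence, so that the invariant class is unique.

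The main obstacle is Step 1/Step 3: pinning down the exact quadratic equation and the correct notion of equivalence, so that one obtains exactly one invariant class rather than zero or two when $p\equiv1\pmod4$. First I will compare with Nikshych's explicit statement that $\cC_p^{C_2}$ is non-group-theoretical, which relies precisely on this rank-two analysis.
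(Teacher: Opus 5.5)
There is a genuine gap, and it sits exactly where you expected: in Steps~1 and~3. You treat $\lambda^2=-1$ as the condition for rank-two module categories to \emph{exist}. That is false. By \cite[Lemma~4.3]{Nik08}, every indecomposable rank-two $\cC_p$-module category has the form $\cN_\mu=\Mod_{\cC_p}(R(A_p,\mu))$ with $[\mu]\neq0$. Such categories occur for every $\lambda\in\mathbb F_p^\times$, where $\Alt(\mu)=\lambda J$; indeed $\cN_\mu$ is the category induced from $\ModCat(A_p,\mu)$.

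So your claim that there are no rank-two module categories at all when $p\equiv3\pmod4$ is wrong. For $p=3$ there are two classes, $\lambda=1$ and $\lambda=2$, and $T_t$ interchanges them. Your conclusion ``none invariant'' is correct in that case only by accident. The ``square-root/quadratic-refinement'' datum you invoke is also a red herring. Such data parametrize extensions of an invariant rank-one $\Vect{A_p}$-module category, i.e.\ fiber functors. The rank-two categories are determined by their restriction to $\Vect{A_p}$. (Separately, your opening claim that $A_p$ permutes the simples transitively contradicts your own next sentence; only the corrected conclusion, that $A_p$ fixes both simples and $m$ swaps them, is right.)

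The missing idea is the correct complete invariant. The restriction of $\cN_\mu$ to $\Vect{A_p}$ is $\ModCat(A_p,\mu)\oplus\ModCat(A_p,\mu')$. The translate formula of \cite[Remark~4.5]{Nik08} (equivalently \cite[Lemma~9.1]{MeMu12}) gives $\Alt(\mu')=-\det(S)\lambda^{-1}J=\lambda^{-1}J$, because $\det S=-1$ for the hyperbolic form.

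Now take the other simple $y$ as base point. Since $m\overline\otimes y\simeq p\,x$, its internal End is $R(A_p,\mu')$, so $\cN_\mu\simeq\cN_{\mu'}$. Hence the equivalence classes are exactly the unordered pairs $\{\lambda,\lambda^{-1}\}$. In particular, relabelling the two simples acts by $\lambda\mapsto\lambda^{-1}$, not by $\lambda\mapsto\pm\lambda$ as you suggest in Step~2. Your computation that $T_t$ acts by $\lambda\mapsto-\lambda$ (from $t^TJt=-J$) is correct.

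Invariance therefore means $\{\lambda,\lambda^{-1}\}=\{-\lambda,-\lambda^{-1}\}$. Since $\lambda\neq-\lambda$ for odd $p$, this is $\lambda=-\lambda^{-1}$, i.e.\ $\lambda^2=-1$. For $p\equiv1\pmod4$ the roots $\pm i$ give the single pair $\{i,-i\}$, because $i^{-1}=-i$; that is what yields exactly one class. Your fallback of ``two classes interchanged'' would actually give \emph{zero} invariant classes. Without the $\{\lambda,\lambda^{-1}\}$ invariant, your Step~3 cannot decide between the answers zero and one.
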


\begin{proof}
With the convention fixed above, the map
\([\mu]\mapsto\Alt(\mu)\) identifies
\(H^2(A_p,\ku^\times)\) with the alternating bicharacters on \(A_p\).
Tambara uses the reciprocal convention in~\cite[Proposition~2.6]{T00},
which gives the same parametrization.

By~\cite[Lemma~4.3]{Nik08}, every indecomposable rank-two
\(\cC_p\)-module category is of the form
\[
\cN_\mu=\Mod_{\cC_p}(R(A_p,\mu)),
\]
where \([\mu]\ne0\) in \(H^2(A_p,\ku^\times)\). Since \(A_p\) has
dimension two, every non-zero alternating form on \(A_p\) is
nondegenerate. Thus there is a unique \(\lambda\in\mathbb F_p^\times\)
such that
\[
\Alt(\mu)=\lambda J.
\]

We first determine the equivalence class of \(\cN_\mu\). By
\cite[Remark~4.5]{Nik08}, restriction to the pointed subcategory gives
\[
\cN_\mu|_{\Vect{A_p}}
\simeq
\ModCat(A_p,\mu)\oplus\ModCat(A_p,\mu'),
\]
where, if \(S\) is the matrix of the symmetric bicharacter \(\chi_p\), the
automorphism \(\iota_\mu\) defined in that remark satisfies
\[
\lambda J\,\iota_\mu=S,
\qquad
\mu'(x,y)=\mu^{-1}(\iota_\mu(x),\iota_\mu(y)).
\]
Since \(\iota_\mu=-\lambda^{-1}JS\) and
\(S^TJS=\det(S)J\), we obtain
\[
\Alt(\mu')
=\iota_\mu^T(-\lambda J)\iota_\mu
=-\frac{\det(S)}{\lambda}J.
\]
This is also the specialization of the translate formula in
\cite[Lemma~9.1]{MeMu12}. For the hyperbolic form \(\chi_p\),
\[
S=\begin{pmatrix}0&1\\1&0\end{pmatrix},
\qquad \det(S)=-1,
\]
and hence
\[
\Alt(\mu')=\lambda^{-1}J.
\]

Consequently, the restriction of \(\cN_\mu\) is encoded by the unordered
pair
\[
\{\lambda,\lambda^{-1}\}.
\]
This unordered pair is a complete invariant. Indeed, an equivalence of
\(\cC_p\)-module categories restricts to an equivalence of
\(\Vect{A_p}\)-module categories and therefore identifies the two
indecomposable summands, possibly after interchanging them. Conversely,
let \(x,y\) be the two simple objects of \(\cN_\mu\), with
\(\underline{\operatorname{End}}(x)\simeq R(A_p,\mu)\). The restriction
formula identifies the neutral part of
\(\underline{\operatorname{End}}(y)\) with \(R(A_p,\mu')\). The fusion
rules in~\cite[Lemma~4.3]{Nik08} give
\(m\overline\otimes y\simeq p x\), so the non-trivial graded component of
\(\underline{\operatorname{End}}(y)\) vanishes. Therefore
\[
\underline{\operatorname{End}}(y)\simeq R(A_p,\mu').
\]
Internal-Hom reconstruction~\cite[Section~2.1]{Nik08} now gives
\[
\cN_\mu
\simeq
\Mod_{\cC_p}(R(A_p,\mu'))
=\cN_{\mu'}.
\]
Since the cohomology class of a cocycle on \(A_p\) is determined by its
alternating form, the unordered pair is both necessary and sufficient.
Thus the equivalence classes are parametrized by
\[
\{\lambda,\lambda^{-1}\},
\qquad \lambda\in\mathbb F_p^\times.
\]

The autoequivalence \(T_t\), induced by \(t(x,y)=(y,x)\), sends
\(\cN_\mu\) to \(\cN_{t\mu}\). Since
\[
t^TJt=-J,
\]
it sends \(\lambda\) to \(-\lambda\). Hence \(\cN_\mu\) is
\(T_t\)-invariant if and only if
\[
\{\lambda,\lambda^{-1}\}
=
\{-\lambda,-\lambda^{-1}\}.
\]
As \(p\) is odd and \(\lambda\ne0\), this is equivalent to
\[
\lambda=-\lambda^{-1},
\qquad\text{or equivalently}\qquad
\lambda^2=-1.
\]

If \(p\equiv3\pmod4\), there is no solution. If
\(p\equiv1\pmod4\), the two solutions \(i\) and \(-i\) determine the
same unordered pair because \(i^{-1}=-i\). Hence there is exactly one
\(T_t\)-invariant equivalence class.
\end{proof}

\begin{lemma}\label{lem:diagonal_ty_subcategory}
Let $s$ denote the non-trivial element of the acting group $C_2$; it acts on
$\cC_p$ by $T_t$. In the semidirect product $\cD=\cC_p\rtimes C_2$, let
\[
\widetilde m=[m,s],
\qquad
\cE_\Delta=\langle [a,e]\;(a\in A_p),\widetilde m\rangle.
\]
Then $\cE_\Delta$ is a Tambara--Yamagami fusion subcategory
\[
\cE_\Delta\simeq \TY(A_p,\widetilde\chi_p,p^{-1}),
\qquad
\widetilde\chi_p(a,b)=\chi_p(a,t(b)).
\]
In coordinates,
\[
\widetilde\chi_p((x_1,x_2),(y_1,y_2))
=
\xi^{x_1y_1+x_2y_2}.
\]
Thus $\widetilde\chi_p$ is hyperbolic if and only if $p\equiv1\pmod4$.
\end{lemma}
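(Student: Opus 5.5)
We would prove Lemma~\ref{lem:diagonal_ty_subcategory} in three steps: compute fusion rules in $\cD$, identify the associator through Tambara--Yamagami's classification, and then check hyperbolicity of $\widetilde\chi_p$ by a small computation over $\mathbb F_p$.

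\emph{Fusion rules.} From \eqref{eq:semidirect_product_tensor}, and using $T_t(m)=m$, we have
\[
[a,e]\otimes[b,e]=[a+b,e],\qquad [a,e]\otimes\widetilde m=[a\otimes m,s]=\widetilde m,\qquad \widetilde m\otimes[a,e]=[m\otimes t(a),s]=\widetilde m,
\]
and, since $s^2=e$,
\[
\widetilde m\otimes\widetilde m=[m\otimes T_t(m),e]=\bigoplus_{a\in A_p}[a,e].
\]
Hence the full subcategory on these simples is closed under tensor products and duals, so it is a fusion subcategory. Its fusion rules are Tambara--Yamagami for $A_p$, and it has the same dimension $2p^2$.

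\emph{Associator.} By~\cite{TY98}, any fusion category with these fusion rules is $\TY(A,\chi',\tau')$ for a unique nondegenerate symmetric bicharacter $\chi'$ and sign $\tau'$. We would extract $\chi'$ from the associator on $(a,\widetilde m,b)$, which in TY normal form is $\chi'(a,b)\,\id$. In $\cD$ this associator is the one of $\cC_p$ applied to $(a,m,t(b))$, since the second factor $b$ is transported through $s_*=T_t$. Assuming the $C_2$-action is strict with trivial tensor structure on $T_t$ (as in~\cite[Prop.~2.10]{Nik08}), this gives $\chi_p(a,t(b))$. The constant $\tau'$ is read off from the associator on $(\widetilde m,\widetilde m,\widetilde m)$, which is the one on $(m,m,m)$ in $\cC_p$. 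That associator is $\tau\chi^{-1}(a,b)$, so $\tau$ remains $p^{-1}$ once the $\chi$-part is reinterpreted.

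This identification is the main obstacle. One must normalize the identifications $\widetilde m\otimes[a,e]\cong\widetilde m$, which involve $t(a)$ rather than $a$, so that the associators take exactly TY normal form. One also has to check that the remaining associators (for instance on $(\widetilde m,a,\widetilde m)$) are $\widetilde\chi_p$ and not a conjugate. If the tensor structure of $T_t$ is nontrivial, one first gauges it away using~\cite[Prop.~2.10]{Nik08}. In coordinates, $\chi_p((x_1,x_2),(y_2,y_1))=\xi^{x_1y_1+x_2y_2}$.

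\emph{Hyperbolicity.} The form $\widetilde\chi_p$ is the standard form $x\cdot y$ on $\mathbb F_p^2$. Hyperbolicity means there is a Lagrangian $L$, i.e.\ a line with $\widetilde\chi_p|_{L\times L}=1$, together with a complementary Lagrangian. An isotropic line spanned by $(u,v)\neq 0$ requires $u^2+v^2=0$, which is solvable exactly when $-1$ is a square mod $p$, i.e.\ when $p\equiv1\pmod4$. In that case the two lines spanned by $(1,\pm i)$ are distinct isotropic lines, so they give a decomposition $A_p=L\times L'$. When $p\equiv3\pmod4$ there are no isotropic lines, so the form is elliptic.
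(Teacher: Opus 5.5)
Your proposal is correct and follows essentially the same route as the paper: fusion rules from the strict semidirect-product formula, associators inherited from $\cC_p$ through the strict action with trivial tensor structure, and hyperbolicity decided by the existence of isotropic vectors $u^2+v^2=0$ in $\mathbb F_p^2$. The step you leave open as the main obstacle is settled in the paper by listing the inherited associators directly. For example, $\alpha_{\widetilde m,[a,e],\widetilde m}=\bigoplus_b\chi_p(t(a),b)\,\id_{[b,e]}=\bigoplus_b\chi_p(a,t(b))\,\id_{[b,e]}$, because $t$ is a $\chi_p$-preserving involution. Likewise $(\alpha_{\widetilde m,\widetilde m,\widetilde m})_{a,b}=p^{-1}\chi_p(a,t(b))^{-1}$ after the reindexing $\widetilde m\otimes[b,e]=[m\otimes t(b),s]$. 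These are exactly the normal-form data of $\TY(A_p,\widetilde\chi_p,p^{-1})$.
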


\begin{proof}
Using the strict semidirect-product convention
\(
[X,g]\otimes[Y,h]=[X\otimes g_*(Y),gh]
\), we obtain
\[
\begin{aligned}
\bigl[a,e\bigr]\otimes[b,e]&=[a+b,e],\\
\bigl[a,e\bigr]\otimes\widetilde m&\simeq\widetilde m
\simeq\widetilde m\otimes[a,e],\\
\widetilde m\otimes\widetilde m
&\simeq\bigoplus_{a\in A_p}[a,e].
\end{aligned}
\]
Moreover, \([a,e]^*=[-a,e]\) and \(\widetilde m^*\simeq\widetilde m\). Thus
\(\cE_\Delta\) has Tambara--Yamagami fusion rules.

The action used to form the semidirect product is strict with identity tensor
structure~\cite[Proposition~2.10]{Nik08}, so the relevant associators are
inherited from \(\cC_p\). For \(a,b\in A_p\), they are
\[
\begin{aligned}
\alpha_{\,[a,e],\widetilde m,\,[b,e]}
 &=\chi_p(a,t(b))\,\id_{\widetilde m},\\
\alpha_{\widetilde m,[a,e],\widetilde m}
 &=\bigoplus_{b\in A_p}\chi_p(a,t(b))\,\id_{[b,e]},\\
(\alpha_{\widetilde m,\widetilde m,\widetilde m})_{a,b}
 &=p^{-1}\chi_p(a,t(b))^{-1}.
\end{aligned}
\]
Here we used that \(t\) preserves \(\chi_p\). These are precisely the
associativity data of
\[
\TY(A_p,\widetilde\chi_p,p^{-1}),
\qquad
\widetilde\chi_p(a,b)=\chi_p(a,t(b)).
\]

In coordinates,
\[
\widetilde\chi_p((x_1,x_2),(y_1,y_2))
 =\xi^{x_1y_1+x_2y_2},
\]
so \(\widetilde\chi_p\) is represented by the identity matrix. It is
hyperbolic precisely when \(x_1^2+x_2^2=0\) has a nonzero solution, or
equivalently when \(-1\) is a square in \(\mathbb F_p\). This is equivalent
to \(p\equiv1\pmod4\); if \(i^2=-1\), the two Lagrangian lines are
\(\langle(1,i)\rangle\) and \(\langle(1,-i)\rangle\).
\end{proof}

\begin{proposition}\label{prop:rank_two_equivariant_p_one_mod_four}
Assume that $p\equiv1\pmod4$. The unique $T_t$-invariant indecomposable
rank-two $\cC_p$-module category admits a $C_2$-equivariant structure whose
non-trivial element acts freely on its two simple objects. Moreover, the case $J=C_2$ contributes exactly one equivalence class of fiber functors
on $\cC_p^{C_2}$.
\end{proposition}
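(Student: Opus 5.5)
We use the diagonal subcategory of Lemma~\ref{lem:diagonal_ty_subcategory}. Set $\cD=\cC_p\rtimes C_2$. A $C_2$-equivariant $\cC_p$-module category is the same thing as a $\cD$-module category (see \eqref{eq:semidirect_product_module_action}).

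\textbf{Step 1: existence.} Since $p\equiv1\pmod4$, Lemma~\ref{lem:diagonal_ty_subcategory} shows that $\cE_\Delta\simeq\TY(A_p,\widetilde\chi_p,p^{-1})$ has hyperbolic form $\widetilde\chi_p$. By Tambara's classification it therefore has a fiber functor, coming from a Lagrangian decomposition such as $\langle(1,i)\rangle\times\langle(1,-i)\rangle$. This gives a rank-one $\cE_\Delta$-module category $\VecCat_\ku$. Induce it to $\cM:=\cD\boxtimes_{\cE_\Delta}\VecCat_\ku$.

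Since $\FPdim\cD=4p^2$ and $\FPdim\cE_\Delta=2p^2$, the subcategory $\cE_\Delta$ has index two. Using the decomposition $\cD=\cE_\Delta\oplus\cE_\Delta\otimes[\mathbf 1,s]$ as right $\cE_\Delta$-modules, $\cM$ is semisimple of rank two, with simple objects $x=[\mathbf1,e]\boxtimes\ku$ and $y=[\mathbf1,s]\boxtimes\ku$. The object $[\mathbf1,s]$ interchanges $x$ and $y$, so the generator of $C_2$ acts freely on $\Irr(\cM)$.

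It remains to check that the restriction of $\cM$ to $\cC_p$ is indecomposable. We have $m\overline\otimes x=[m,e]\boxtimes\ku=[\mathbf1,s]\otimes\widetilde m\boxtimes\ku$, because $T_t(m)=m$. Since $\widetilde m$ acts on $\ku$ by multiplicity $p$, this gives $m\overline\otimes x\simeq p\,y$. So $m$ links the two simples, and the restriction is an indecomposable rank-two $\cC_p$-module category. Being semisimple over a fusion category, it is exact. By Lemma~\ref{lem:no_rank_two_invariant_nikshych} it is the unique $T_t$-invariant one.

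\textbf{Step 2: exactly one class.} By Corollary~\ref{cor:cohomologically_trivial} and Theorem~\ref{thm:main_classification}, the case $J=C_2$ contributes the set of $C_2$-equivariant structures on this underlying category, up to equivariant equivalence. Conjugation is trivial because $C_2$ is abelian. Every such structure has a free action: a trivial action on two simples would give stabilizer $C_2$, whose cocycle is trivial and hence degenerate. So all structures contribute, and we must show there is exactly one up to equivalence.

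By standard theory, equivariant structures on a fixed $T_t$-invariant $\cN$ lifting the invariance form a torsor, when nonempty, over $H^2(C_2,\Inv(\cC^*_{\cN}))$, with a further action of $H^1(C_2,\Aut)$ identifying some of them. I would show that a structure is determined by the isomorphism $U_s\colon\cN\to\cN^s$ together with $\mu_{s,s}\colon U_s^2\Rightarrow\id$, modulo rescaling. Since the action is free, the two simples are swapped. Rescaling the chosen identification of $U_s(x)$ with $y$ rescales $\mu_{s,s}$ by a square, so every scalar ambiguity in $\ku^\times$ is absorbed and only the choice of the module equivalence $U_s$ remains.

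\textbf{Main obstacle.} Different choices of $U_s$ differ by invertible objects of the dual category $(\cC_p)^*_{\cN}$, a TY-type category whose group of invertibles has order $p^2$. The resulting equivariant structures correspond to $C_2$-fixed data modulo norms. Because $p$ is odd, the relevant cohomology groups $H^1$ and $H^2$ of $C_2$ with coefficients in a $p$-group vanish, so all structures are equivalent.

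An alternative route avoids this cohomology. The resulting $\cC_p^{C_2}$-module categories are rank one, so they correspond to rank-one $\cD$-module categories of the form $\cM$. A rank-two $\cD$-module category with free swap should be induced from a rank-one module over an index-two subcategory containing $[\mathbf 1,e]$-stabilizing data, namely $\cE_\Delta$ or its twist by a character. Counting fiber functors on $\cE_\Delta$ (two, from the two ordered Lagrangian decompositions) modulo the automorphism swapping them would then give one class. I would attempt the cohomological argument first and use the counting argument as a cross-check.
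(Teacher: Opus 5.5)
Your existence step is essentially the paper's. You induce a rank-one $\cE_\Delta$-module category to $\cD=\cC_p\rtimes C_2$. Your computation $m\overline\otimes x\simeq p\,y$ via $[m,e]\simeq[\mathbf 1,s]\otimes\widetilde m$ is a concrete version of the paper's remark that the degree-$(1,e)$ component exchanges the two coset summands.

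For uniqueness your primary route is genuinely different, while your ``alternative route'' is essentially the paper's argument. The paper regards an arbitrary candidate as a $\cD$-module category and uses the $\mathbb Z/2\mathbb Z\times C_2$-grading to show that the stabilizer of either $\Vect{A_p}$-summand is exactly $\Delta$. So no ``twist by a character'' arises. Clifford theory then writes the candidate as $\cD\boxtimes_{\cE_\Delta}\cN_\pm$, and the two inductions are identified because conjugation by $[\mathbf 1,s]$ exchanges $L_+$ and $L_-$.

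Your primary route instead fixes the unique $T_t$-invariant $\cN$. It classifies $C_2$-equivariant structures on $\cN$ through the extension $1\to\Inv((\cC_p)^*_{\cN})\to E\to C_2\to 1$ of isomorphism classes of twisted module equivalences. All choices are then killed by coprimality of $2$ and $|\Inv|$ and by divisibility of $\ku^\times$. This works and isolates a reusable principle, but it needs information about the dual category. The paper's route needs none, and it exhibits the $J=C_2$ class as ``fiber functors of $\cE_\Delta$ modulo $t$'', parallel to the $J=\{e\}$ case.

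Several points in your version need repair.

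\textbf{The bookkeeping.} The parametrization you quote is not right as stated; nothing here is a torsor over $H^2(C_2,\Inv)$.
\begin{enumerate}
\item Relative to the structure from Step~1, the admissible $U'_s=U_s\circ F$ (those with $(U'_s)^2\simeq\id$) are those with ${}^sF\cdot F\simeq\id$. Conjugation by $G\in\Inv$ changes $F$ by a coboundary, so the classes form $H^1(C_2,\Inv)$, not ``fixed data modulo norms''.
\item For each such $U'_s$ there is a sign obstruction in $H^3(C_2,\ku^\times)$ to a pentagon-compatible $\mu_{s,s}$. It is conjugation-invariant, so it vanishes by Step~1.
\item The residual scalar in $\mu_{s,s}$ is removed by the equivariant equivalence $(\id,\,w^s=c\,\id)$, which multiplies $\mu_{s,s}$ by $c^2$.
\end{enumerate}
Since $H^2(C_2,\Inv)$ also vanishes, your conclusion survives.

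\textbf{The order of $\Inv$.} The load-bearing claim $|\Inv((\cC_p)^*_{\cN})|=p^2$ must be proved, and it uses $\lambda=\pm i$. An invertible module autoequivalence cannot exchange $x$ and $y$, because $\ModCat(A_p,\mu)\not\simeq\ModCat(A_p,\mu')$ when $\lambda\neq\lambda^{-1}$. Restriction to the $\Vect{A_p}$-summand of $x$ then embeds $\Inv$ into $\widehat{A_p}$, since $\cN$ is induced from that summand. For $\lambda=\pm1$ the dual would instead be pointed of order $2p^2$.

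\textbf{Freeness.} Your sentence on freeness argues backwards. Degeneracy of the stabilizer cocycle shows that a non-free structure would not contribute, not that none exists. Freeness of every structure follows instead from $\lambda\neq-\lambda$: $U_s(x)\simeq x$ would force $\ModCat(A_p,\mu)\simeq\ModCat(A_p,t^*\mu)$.

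\textbf{Conjugation.} Conjugation by $s$ is harmless because $U_s$ is itself an equivariant equivalence $\cM\to{}^s\cM$, not because $C_2$ is abelian.
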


\begin{proof}
Let
\[
\cD=\cC_p\rtimes C_2.
\]
We use the faithful grading
\[
K:=\mathbb Z/2\mathbb Z\times C_2,
\]
where the first factor is the Tambara--Yamagami grading of $\cC_p$ and the
second is the semidirect-product grading. Put
\[
\Delta=\{(0,e),(1,s)\}\subset K,
\qquad
\gamma=(0,s),
\]
and let
\[
\cE_\Delta=\cD_{(0,e)}\oplus\cD_{(1,s)}.
\]
By Lemma~\ref{lem:diagonal_ty_subcategory},
\[
\cE_\Delta\simeq
\TY(A_p,\widetilde\chi_p,p^{-1}),
\qquad
\widetilde\chi_p(a,b)=\chi_p(a,t(b)).
\]
Since $p\equiv1\pmod4$, the form $\widetilde\chi_p$ is hyperbolic. If
$i^2=-1$ in $\mathbb F_p$, its two Lagrangian lines are
\[
L_+=\langle(1,i)\rangle,
\qquad
L_-=\langle(1,-i)\rangle.
\]
Tambara's classification~\cite[Propositions~4.1 and~4.2]{T00} therefore gives
exactly two rank-one $\cE_\Delta$-module categories, denoted $\cN_+$ and
$\cN_-$, corresponding to the two ordered decompositions
$A_p=L_+\times L_-$ and $A_p=L_-\times L_+$. Fix either one and denote it by
$\cN$.

Consider the induced $\cD$-module category
\[
\widetilde{\cM}
:=
\Ind_{\cE_\Delta}^{\cD}(\cN)
=
\cD\boxtimes_{\cE_\Delta}\cN.
\]
By~\cite[Propositions~3.5 and~3.7]{Gal12}, the induced category is
graded by the two right cosets $\Delta$ and $\gamma\Delta$. Accordingly,
as a right $\cE_\Delta$-module category,
\[
\cD=\cE_\Delta\oplus\cD_{\gamma\Delta}.
\]
The object
\[
u=[\mathbf1,s]\in\cD_\gamma
\]
is invertible, and left tensoring by $u$ gives an equivalence of right
$\cE_\Delta$-module categories
\[
\cE_\Delta\longrightarrow\cD_{\gamma\Delta},
\qquad
X\longmapsto u\otimes X.
\]
Consequently,
\[
\widetilde{\cM}
\simeq
\cN\oplus\cN
\]
as a semisimple category. In particular, $\widetilde{\cM}$ has exactly two
simple objects, one in each coset summand. The invertible object $u$ exchanges
these two summands, so $\widetilde{\cM}$ is indecomposable as a $\cD$-module
category.

Let
\[
H=\mathbb Z/2\mathbb Z\times\{e\}\subset K;
\qquad
\cD_H=\cC_p.
\]
The homogeneous component of degree $(1,e)$ sends the coset $\Delta$ to
$(1,e)\Delta=\gamma\Delta$ and conversely. Hence the action of the non-zero
object $[m,e]\in\cD_{(1,e)}$ sends either simple object of
$\widetilde{\cM}$ to a non-zero multiple of the other. It follows that
\[
\Res_{\cC_p}^{\cD}(\widetilde{\cM})
\]
is an indecomposable rank-two $\cC_p$-module category.

A module category over $\cC_p\rtimes C_2$ is equivalently a $C_2$-equivariant
$\cC_p$-module category. Thus the restricted module category is
$T_t$-invariant, and Lemma~\ref{lem:no_rank_two_invariant_nikshych} identifies
it with the unique such rank-two $\cC_p$-module category. Moreover, the
non-trivial element of $C_2$ acts through $u=[\mathbf1,s]$, which exchanges
the two simple objects. The action is therefore free and transitive. Since
$H^2(C_2,\ku^\times)=0$, Corollary~\ref{cor:cohomologically_trivial} shows
that its equivariantization has rank one, and hence defines a fiber functor on
$\cC_p^{C_2}$.

It remains to prove uniqueness in the case $J=C_2$. Let $\cM$ be any
$C_2$-equivariant indecomposable rank-two $\cC_p$-module category whose
non-trivial element acts freely on simple objects, and regard $\cM$ as a
$\cD$-module category. Let $x,y$ be its two simple objects. By the fusion-rule
calculation in~\cite[Lemma~4.3]{Nik08}, every $a\in A_p$ fixes both $x$ and
$y$, whereas the non-invertible component of $\cC_p$ interchanges them.
Thus the restriction of $\cM$ to
\[
\cD_{(0,e)}=\Vect{A_p}
\]
has two indecomposable rank-one summands, generated by $x$ and $y$. The
degree $(1,e)$ component interchanges these summands, and so does the degree
$(0,s)$ component because the given $C_2$-action is free. Their product degree
$(1,s)$ therefore stabilizes each summand. Hence the stabilizer in $K$ of either
$\Vect{A_p}$-summand is precisely $\Delta$.

The Clifford theorem for graded fusion categories~\cite[Corollary~4.4]{Gal12}
now gives
\[
\cM\simeq
\cD\boxtimes_{\cE_\Delta}\overline{\cN},
\]
where $\overline{\cN}$ is a $\cE_\Delta$-module category whose restriction to
$\Vect{A_p}$ is one of the two rank-one summands above. In particular,
$\overline{\cN}$ itself has rank one, so it is equivalent to $\cN_+$ or
$\cN_-$.

Finally, the coset degree $\gamma=(0,s)$ normalizes $\Delta$. Conjugation by
$u=[\mathbf1,s]$ acts on $\cE_\Delta$ by $t$ on $A_p$ and fixes
$\widetilde m=[m,s]$. Since
\[
t(L_+)=L_-,
\qquad
t(L_-)=L_+,
\]
the translated module category
\[
\cD_{\gamma\Delta}\boxtimes_{\cE_\Delta}\cN_+
\]
is equivalent to $\cN_-$ as an $\cE_\Delta$-module category. The equivalence
criterion for induced module categories~\cite[Proposition~4.6]{Gal12}
therefore implies
\[
\cD\boxtimes_{\cE_\Delta}\cN_+
\simeq
\cD\boxtimes_{\cE_\Delta}\cN_-.
\]
Thus the case $J=C_2$ contributes exactly one equivalence class of fiber
functors on $\cC_p^{C_2}$.
\end{proof}

\begin{theorem}\label{thm:nikshych_fiber_functors}
Let $p$ be an odd prime, and let $H_p$ be Nikshych's semisimple Hopf algebra
of dimension $4p^2$, so that
\[
\Rep(H_p)\simeq \cC_p^{C_2}
\]
as tensor categories. Then $\Rep(H_p)$ has one equivalence class of fiber
functors if $p\equiv3\pmod4$, and two equivalence classes of fiber functors
if $p\equiv1\pmod4$.
\end{theorem}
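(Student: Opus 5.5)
We apply Theorem~\ref{thm:main_classification} to $G=C_2$ acting on $\cC_p$ by $T_t$, and sort by the subgroup $J\le C_2$. Since every subgroup of $C_2$ is cyclic, Corollary~\ref{cor:cohomologically_trivial} applies. Fiber functors on $\cC_p^{C_2}$ therefore correspond to conjugacy classes of pairs $(J,\cM)$, where $\cM$ is an indecomposable exact semisimple $J$-equivariant $\cC_p$-module category on whose simple objects $J$ acts freely and transitively. Because $C_2$ is abelian, conjugation only moves the equivariant structure along $T_t$, and the count splits as a sum over $J=\{e\}$ and $J=C_2$.

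For $J=\{e\}$, the pairs are rank-one $\cC_p$-module categories, i.e.\ fiber functors on $\cC_p$ by Proposition~\ref{prop:fiber_rank_one}. These are the two classes attached to the ordered decompositions $(L_1,L_2)$ and $(L_2,L_1)$ (Tambara's classification, the only Lagrangians being $L_1,L_2$). Conjugation by the nontrivial element replaces $\cM$ by ${}^s\cM$, i.e.\ pulls back along $T_t$. Since $t(L_1)=L_2$, this interchanges the two decompositions. We should check that the Tambara parametrization is natural in automorphisms of $(A_p,\chi_p)$, so that pulling back the module category attached to $(L,L')$ along $T_t$ gives the one attached to $(t^{-1}L,t^{-1}L')$. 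Granting this, the two fiber functors form a single conjugacy class, and $J=\{e\}$ contributes exactly one class for every odd $p$.

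For $J=C_2$, freeness and transitivity force $\cM$ to be an indecomposable rank-two $\cC_p$-module category admitting a $C_2$-equivariant structure, so in particular it is $T_t$-invariant. By Lemma~\ref{lem:no_rank_two_invariant_nikshych}, no such category exists when $p\equiv3\pmod4$, so this case contributes nothing. When $p\equiv1\pmod4$, Proposition~\ref{prop:rank_two_equivariant_p_one_mod_four} shows that this case contributes exactly one class: it exhibits an equivariant structure with free action, and proves that any two such equivariant structures give equivalent $\cD$-module categories. By Theorem~\ref{thm:conjugacy_equivariantization}, that equivalence is the same as conjugacy.

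Adding the contributions gives $1+0=1$ class for $p\equiv3\pmod4$ and $1+1=2$ classes for $p\equiv1\pmod4$. A $J=\{e\}$ class can never coincide with a $J=C_2$ class, because conjugacy preserves the subgroup. The main point needing care is the $J=\{e\}$ step: we must confirm that $T_t$ acts nontrivially on the two fiber functors of $\cC_p$, and not merely on the Lagrangian data. I would verify this by noting that the rank-one module category for $(L,L')$ restricts to $\Vect{A_p}$ as $\ModCat(L,1)$-type data, i.e.\ the pointed part acts through a fiber functor whose kernel subgroup data involves $L$. The twisted restriction then involves $t(L)$, and these are inequivalent because the two fiber functors of $\cC_p$ are distinct.
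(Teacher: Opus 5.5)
Your proposal is correct and is essentially the paper's proof. Both arguments use Corollary~\ref{cor:cohomologically_trivial}, get one class from $J=\{e\}$ because $t$ interchanges the two ordered Lagrangian decompositions, get zero or one class from $J=C_2$ by Lemma~\ref{lem:no_rank_two_invariant_nikshych} and Proposition~\ref{prop:rank_two_equivariant_p_one_mod_four}, and use that $\{e\}$ and $C_2$ are not conjugate.

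The paper simply asserts the interchange in the $J=\{e\}$ step, and your sketched verification of it picks the wrong invariant. A rank-one module category restricts to $\Vect{A_p}$ as $\ModCat(A_p,\psi)$, not as $\ModCat(L,1)$, which has rank $p$. The correct check is that $\Alt(\psi)=\lambda J$ with $\lambda\in\mathbb{F}_p^\times$ (in fact $\lambda=\pm1$ by the translate formula in Lemma~\ref{lem:no_rank_two_invariant_nikshych}). Pulling back along $T_t$ gives $t^T(\lambda J)t=-\lambda J\neq\lambda J$, so $T_t$ fixes neither fiber functor and must swap them.
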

\begin{proof}
Since every subgroup of $C_2$ is cyclic, its second cohomology with
coefficients in $\ku^\times$ is trivial. Hence
Corollary~\ref{cor:cohomologically_trivial} applies.

We distinguish the two possible subgroups $J\subseteq C_2$.

First let $J=\{e\}$. In this case the relevant data are the fiber functors of
$\cC_p$, modulo the action of $C_2$. By Tambara's classification, the two
fiber functors of $\cC_p=\TY(A_p,\chi_p,p^{-1})$ correspond to the two ordered
Lagrangian decompositions
\[
A_p=L_1\times L_2
\qquad\text{and}\qquad
A_p=L_2\times L_1.
\]
The involution $t(x,y)=(y,x)$ exchanges $L_1$ and $L_2$, and hence exchanges
these two ordered decompositions. Therefore the case $J=\{e\}$ contributes
one equivalence class of fiber functors on $\cC_p^{C_2}$.

Now let $J=C_2$. Since $H^2(C_2,\ku^\times)=0$, the rank-one condition in
Corollary~\ref{cor:cohomologically_trivial} requires a $C_2$-equivariant
indecomposable $\cC_p$-module category whose non-trivial element acts freely
and transitively on its simple objects. Thus the underlying $\cC_p$-module
category must have rank two.

If $p\equiv3\pmod4$, Lemma~\ref{lem:no_rank_two_invariant_nikshych} shows
that there is no $T_t$-invariant indecomposable rank-two $\cC_p$-module
category. Hence the case $J=C_2$ contributes no fiber functor, and the single
class from the case $J=\{e\}$ is the whole list.

If $p\equiv1\pmod4$, Proposition~\ref{prop:rank_two_equivariant_p_one_mod_four}
shows that the unique $T_t$-invariant indecomposable rank-two
$\cC_p$-module category admits a $C_2$-equivariant structure whose
non-trivial element acts freely on its two simple objects, and that the case
$J=C_2$ contributes exactly one equivalence class of fiber functors.

Finally, the two classes just obtained for $p\equiv1\pmod4$ are not identified
with each other: they come from the subgroups $\{e\}$ and $C_2$, respectively,
and these subgroups are not conjugate in $C_2$. By
Theorem~\ref{thm:main_classification}, they therefore define distinct
equivalence classes. Thus $\Rep(H_p)\simeq\cC_p^{C_2}$ has one equivalence
class of fiber functors if $p\equiv3\pmod4$, and two if
$p\equiv1\pmod4$.
\end{proof}
\section{Fiber functors on gaugings}\label{sec:gauging}

We now apply the classification to gaugings. Let $\cB$ be a braided fusion category and let
\[
\cE=\bigoplus_{g\in G}\cE_g
\]
be a faithful $G$-crossed braided fusion category with neutral component $\cE_e=\cB$. We denote by $\rho$ the crossed action of $G$ on $\cE$. Following the formulation of gauging in~\cite{CGPW16}, based on the extension theory of~\cite{ENO10}, the gauging determined by the $G$-crossed extension $\cE$ is the equivariantization
\[
\cE^G.
\]
The crossed braiding makes $\cE^G$ a braided fusion category, but the fiber-functor problem below uses only the underlying tensor category $\cE$ together with the action $\rho$.

\begin{definition}
For a subgroup $S\subseteq G$, set $\cE_S=\bigoplus_{s\in S}\cE_s$. A \emph{Clifford datum} for $\cE$ is a triple $(S,\cN,\widetilde{\cN})$, where $\cN$ is an indecomposable semisimple $\cB$-module category and $\widetilde{\cN}$ is an indecomposable semisimple $\cE_S$-module category on the same underlying finite semisimple category, whose restriction to $\cB$ is equivalent to the given $\cB$-module structure on $\cN$. We call $\widetilde{\cN}$ an $\cE_S$-extension of $\cN$.
\end{definition}

Given a Clifford datum $(S,\cN,\widetilde{\cN})$, put
\[
\Ind(S,\widetilde{\cN})
:=\Ind_{\cE_S}^{\cE}(\widetilde{\cN})
\simeq \cE\boxtimes_{\cE_S}\widetilde{\cN}.
\]
The result is an indecomposable $\cE$-module category. Conversely, by Clifford theory for graded fusion categories, every indecomposable $\cE$-module category is of this form.

Let $\mathfrak d=(S,\cN,\widetilde{\cN})$ be a Clifford datum and let $h\in G$. We define its $h$-twist by
\[
\mathfrak d^h=(h^{-1}Sh,\cN^h,\widetilde{\cN}^{\,h}),
\]
where $\widetilde{\cN}^{\,h}$ is the same underlying category as $\widetilde{\cN}$, regarded as an $\cE_{h^{-1}Sh}$-module category by
\[
X\overline{\otimes}^{\,h} N:=\rho_h(X)\overline{\otimes}N,
\qquad X\in \cE_{h^{-1}Sh},\; N\in\widetilde{\cN},
\]
and $\cN^h$ is its restriction to $\cB=\cE_e$. Then
\[
\Ind(S,\widetilde{\cN})^h\simeq
\Ind(h^{-1}Sh,\widetilde{\cN}^{\,h})
\]
as $\cE$-module categories.

\begin{definition}
Let $\mathfrak d=(S,\cN,\widetilde{\cN})$ be a Clifford datum. An \emph{$H$-equivariant Clifford lift} of $\mathfrak d$ is a choice, for every $h\in H$, of an equivalence of induced $\cE$-module categories
\[
\Phi_h:\Ind(S,\widetilde{\cN})\xrightarrow{\sim}
\Ind(h^{-1}Sh,\widetilde{\cN}^{\,h}),
\]
together with coherence isomorphisms making the corresponding functors
\[
\Ind(S,\widetilde{\cN})\longrightarrow
\Ind(S,\widetilde{\cN})^h
\]
an $H$-equivariant structure in the sense of Section~\ref{sec:preliminaries}.
\end{definition}

\begin{remark}
For each fixed $h\in H$, the equivalence criterion for induced module categories over graded fusion categories~\cite[Definition~5.10 and Proposition~5.12]{MM20} tests the existence of $\Phi_h$ by asking for an element $a_h\in G$ such that
\[
S=a_h(h^{-1}Sh)a_h^{-1}
\]
and, writing
\[
\cE_{a_h(h^{-1}Sh)}:=\bigoplus_{u\in a_h(h^{-1}Sh)}\cE_u
\]
for the corresponding $(\cE_S,\cE_{h^{-1}Sh})$-bimodule category, an
equivalence of $\cE_S$-module categories
\[
\cE_{a_h(h^{-1}Sh)}
\boxtimes_{\cE_{h^{-1}Sh}}
\widetilde{\cN}^{\,h}
\simeq
\widetilde{\cN}.
\]
It remains to require these equivalences, as $h$ varies, to satisfy the equivariant coherence.
\end{remark}

\begin{theorem}\label{thm:gauging_clifford_fiber_functors}
Let $\cE=\bigoplus_{g\in G}\cE_g$ be a faithful $G$-crossed braided fusion category with neutral component $\cB=\cE_e$. Fiber functors on $\cE^G$ correspond to equivalence classes of tuples
\[
(H,S,\cN,\widetilde{\cN},\mathbf{u})
\]
satisfying:
\begin{enumerate}
\item $H$ and $S$ are subgroups of $G$;
\item $(S,\cN,\widetilde{\cN})$ is a Clifford datum for $\cE$;
\item $\mathbf{u}$ is an $H$-equivariant Clifford lift of $(S,\cN,\widetilde{\cN})$;
\item for $\cM=\Ind(S,\widetilde{\cN})$, the induced action of $H$ on $\Irr(\cM)$ is transitive;
\item for some, hence every, simple object $X\in\cM$, the stabilizer cocycle
\[
[\alpha_X]\in H^2(\St_H(X),\ku^\times)
\]
is nondegenerate.
\end{enumerate}

Two tuples are identified when their associated equivariant $\cE$-module categories are conjugate in the sense of Definition~\textup{\ref{def:conjugate_equivariant_module_categories}}.
\end{theorem}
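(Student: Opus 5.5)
The proof reduces Theorem~\ref{thm:gauging_clifford_fiber_functors} to Theorem~\ref{thm:main_classification}, applied to the $G$-action $\rho$ on the underlying fusion category $\cE$. The crossed braiding plays no role. Since $\cE$ is fusion, every module category is exact, and semisimplicity of $\cM$ is automatic. By Theorem~\ref{thm:main_classification}, fiber functors on $\cE^G$ are therefore in bijection with conjugacy classes of pairs $(H,\cM)$. Here $H\le G$, and $\cM$ is an $H$-equivariant $\cE$-module category whose underlying $\cE$-module category is indecomposable. The action of $H$ on $\Irr(\cM)$ must be transitive, and the stabilizer class $[\alpha_X]$ must be nondegenerate. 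What remains is to reparametrize the underlying indecomposable $\cE$-module category and its $H$-equivariant structure by Clifford data and Clifford lifts.

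First, I would invoke Clifford theory for $G$-graded fusion categories~\cite{Gal12, MM20}. This shows that every indecomposable $\cE$-module category is equivalent to $\Ind(S,\widetilde{\cN})$ for some Clifford datum $(S,\cN,\widetilde{\cN})$. Concretely, one restricts to $\cB$ and picks an indecomposable summand $\cN$. One then takes $S$ to be its stabilizer in $G$, and $\widetilde{\cN}$ the resulting $\cE_S$-extension. Conversely, each such induction is indecomposable. Conditions (1) and (2) of the tuple are thus exactly the statement that $\cM$ is an indecomposable $\cE$-module category presented in Clifford form.

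Second, I would transport the equivariant structure. An $H$-equivariant structure on $\cM$ consists of module equivalences $U_h:\cM\to\cM^h$ together with coherent $\mu_{g,h}$. Using $\Ind(S,\widetilde{\cN})^h\simeq\Ind(h^{-1}Sh,\widetilde{\cN}^{\,h})$, each $U_h$ is the same thing as an equivalence $\Phi_h$ as in the definition of a Clifford lift. The coherence of the $\mu$'s is precisely the coherence built into that definition. Hence $H$-equivariant structures on $\Ind(S,\widetilde{\cN})$ coincide with $H$-equivariant Clifford lifts $\mathbf u$. This is essentially definitional, but I would check that the twist isomorphism is compatible with composition, so that the identification respects the monoidal structure of $\Aut^H_{\cE}(\cM)$. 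Conditions (4) and (5) are then literally conditions (3) and (4) of Theorem~\ref{thm:main_classification} for $\cM=\Ind(S,\widetilde{\cN})$.

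Finally, the equivalence relation is handled as follows. Two tuples are declared equivalent exactly when the associated equivariant module categories are conjugate in the sense of Definition~\ref{def:conjugate_equivariant_module_categories}. Theorem~\ref{thm:main_classification}, via Theorem~\ref{thm:conjugacy_equivariantization}, then gives a bijection. The main point needing care is that different Clifford presentations of the same $\cE$-module category are absorbed by this relation. Such presentations differ by conjugating $S$ and replacing $\cN$ with a translate, as in the criterion of~\cite[Proposition~5.12]{MM20}. Since the identification is made at the level of the equivariant $\cE$-module category and not of the raw tuple, this is automatic. I expect the only genuine obstacle to be the second step: making the coherence correspondence between Clifford lifts and equivariant structures precise. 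There I would simply take the definition of a Clifford lift as the pullback of the equivariant structure along the induction equivalence.
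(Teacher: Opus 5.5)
Your proposal is correct and follows essentially the same route as the paper: you apply Theorem~\ref{thm:main_classification} to the crossed action on $\cE$, use Clifford theory~\cite{Gal12, MM20} to present the indecomposable module category as $\Ind(S,\widetilde{\cN})$, transport the $H$-equivariant structure to obtain $\mathbf u$, and take the equivalence relation from Theorem~\ref{thm:conjugacy_equivariantization}. One small correction: over a fusion category it is not true that every module category is exact (exactness is equivalent to semisimplicity there). This does not affect the argument, because Clifford data are semisimple by definition and induction from $\cE_S$ to $\cE$ preserves semisimplicity.
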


\begin{proof}
Apply Theorem~\ref{thm:main_classification} to the crossed action
$\rho:G\to\AutT(\cE)$. By Clifford theory for graded fusion
categories~\cite{Gal12, MM20}, every indecomposable $\cE$-module category
has the form
\[
\cM\simeq\Ind_{\cE_S}^{\cE}(\widetilde{\cN})
\simeq\cE\boxtimes_{\cE_S}\widetilde{\cN}
\]
for a Clifford datum $(S,\cN,\widetilde{\cN})$. Transporting the
$H$-equivariant structure on $\cM$ gives $\mathbf u$.

Conversely, such a tuple produces an $H$-equivariant indecomposable exact
semisimple $\cE$-module category. Items~\textup{(4)--(5)} and
Theorem~\ref{thm:main_classification} give a fiber functor. The equivalence
relation is exactly the conjugacy relation of
Theorem~\ref{thm:conjugacy_equivariantization}.
\end{proof}

\begin{corollary}\label{cor:prime_cyclic_gauging}
Let $G=C_p=\langle \tau\rangle$ be cyclic of prime order, and let $\cE=\bigoplus_{g\in G}\cE_g$ be a faithful $G$-crossed braided fusion category with neutral component $\cB$. Then the data of Theorem~\ref{thm:gauging_clifford_fiber_functors} reduce to the following possibilities.

\begin{enumerate}
\item $H=\{e\}$, $S=G$, and $\widetilde{\cN}$ is a rank-one $\cE$-module category. These fiber functors are the $G$-orbits of fiber functors on $\cE$ under the crossed action.

\item $H=G$, and $\cM=\Ind(S,\widetilde{\cN})$ carries a $G$-equivariant Clifford lift whose generator acts on $\Irr(\cM)$ as a single $p$-cycle. This has two subcases:
\begin{enumerate}
\item $S=\{e\}$, $\widetilde{\cN}=\cN$, and
\[
\cM=\cE\boxtimes_{\cB}\cN
=\bigoplus_{g\in G}\cM_g,
\qquad
\cM_g:=\cE_g\boxtimes_{\cB}\cN,
\]
where every $\cM_g$ has rank one and the generator of $G$ acts on their $p$ simple objects as a single $p$-cycle.
\item $S=G$, and $\widetilde{\cN}$ is an $\cE$-module category with $p$ simple objects, equipped with a $G$-equivariant structure whose generator acts as a $p$-cycle.
\end{enumerate}
\end{enumerate}
\end{corollary}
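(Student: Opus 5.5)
Since $G=C_p$ has only the subgroups $\{e\}$ and $G$, and every subgroup is cyclic, Corollary~\ref{cor:cohomologically_trivial} applies to the crossed action $\rho$. So the data of Theorem~\ref{thm:gauging_clifford_fiber_functors} reduce to a subgroup $H\in\{\{e\},G\}$ and an $H$-equivariant indecomposable semisimple $\cE$-module category $\cM=\Ind(S,\widetilde{\cN})$ on whose simple objects $H$ acts freely and transitively. I will split according to $H$, and then according to $S\in\{\{e\},G\}$.

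\emph{Case $H=\{e\}$.} Free transitive action of the trivial group means that $\cM$ has rank one, so the equivariant structure is trivial. I will not assert $S=G$ a priori; instead I note that $\cM$ is itself the rank-one $\cE$-module category, and any rank-one module category restricts to $\cE_S$ with rank one. It can then be rewritten as the Clifford datum with $S=G$ and $\widetilde{\cN}=\cM$, because $\Ind_{\cE}^{\cE}$ is the identity. Conjugacy in Definition~\ref{def:conjugate_equivariant_module_categories} with $H=K=\{e\}$ is just equivalence with a twist ${}^g\cM$. Under Proposition~\ref{prop:fiber_rank_one} this becomes the $G$-orbit relation on fiber functors of $\cE$, which gives item~(1).

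\emph{Case $H=G$.} A free transitive action of $C_p$ forces $|\Irr(\cM)|=p$, and the generator $\tau$ then acts as a single $p$-cycle. Now split on $S$.

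If $S=G$, then $\cM=\widetilde{\cN}$ is an $\cE$-module category with $p$ simples and an equivariant structure, which is item~(2b).

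If $S=\{e\}$, then $\cM=\cE\boxtimes_{\cB}\cN=\bigoplus_g\cE_g\boxtimes_{\cB}\cN$. I will use the coset grading of induced module categories from \cite[Propositions~3.5 and~3.7]{Gal12}, which decomposes $\Irr(\cM)$ into the disjoint union of the $\Irr(\cM_g)$. Each $\cM_g$ is nonzero, since $\cE_g$ contains an invertible-up-to-duality object $X$ with $X^*\otimes X\supseteq\mathbf 1$ and $\cN\neq0$. The total count $p$ summed over $p$ nonzero summands then forces each $\cM_g$ to have rank one, which is item~(2a).

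No other $S$ occurs, so the list is exhaustive. Conversely, each listed datum satisfies items~(4)--(5) of Theorem~\ref{thm:gauging_clifford_fiber_functors}. Transitivity holds by hypothesis, and freeness follows because a $p$-cycle has trivial stabilizers, so the stabilizer cocycle is trivially nondegenerate.

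The main obstacle is the subcase $S=\{e\}$. There I must verify that every summand $\cM_g$ is nonzero and that the rank of $\cM$ is the sum of the ranks of the $\cM_g$, so that a total rank of $p$ forces each summand to have rank exactly one. Both facts rest on faithfulness of the grading together with the graded decomposition of the relative tensor product from \cite{Gal12}, and I would cite that result rather than recompute it.
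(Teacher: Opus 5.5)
Your proposal is correct and follows the paper's proof essentially step by step. You reduce via Corollary~\ref{cor:cohomologically_trivial} to free transitive actions, split on $H$ and $S$, and in the subcase $S=\{e\}$ use the decomposition $\cM=\bigoplus_{g}\cE_g\boxtimes_{\cB}\cN$ into $p$ non-zero summands to force each summand to have rank one. The only difference is in the case $H=\{e\}$: the paper excludes $S=\{e\}$ directly, by the same bound $\#\Irr(\cM)\ge p$ that you use later. You instead re-present the rank-one $\cM$ as the datum with $S=G$ and $\widetilde{\cN}=\cM$, which is equally valid up to the identification of tuples.
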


\begin{proof}
The only subgroups of $C_p$ are $\{e\}$ and $G$, and $H^2(L,\ku^\times)=0$ for both. Hence Corollary~\ref{cor:cohomologically_trivial} applies: the stabilizer condition in Theorem~\ref{thm:gauging_clifford_fiber_functors} is equivalent to freeness of the $H$-action on $\Irr(\cM)$.

If $H=\{e\}$, freeness and transitivity mean that $\cM$ has one simple object. If $S=\{e\}$, the restriction of $\Ind(S,\widetilde{\cN})$ to $\cB$ has one nonzero component for each element of $G$; hence its rank is at least $p$. Thus rank one forces $S=G$.

If $H=G$, a free transitive action of the cyclic group of order $p$ is exactly a single $p$-cycle. The two alternatives for $S$ are the only possible ones. For $S=\{e\}$, Clifford theory gives a decomposition
\[
\cM=\bigoplus_{g\in G}\cM_g,
\qquad
\cM_g=\cE_g\boxtimes_{\cB}\cN,
\]
into non-zero indecomposable $\cB$-module subcategories; see~\cite[Proposition~4.1]{Gal12}. Consequently,
\[
\#\Irr(\cM)=\sum_{g\in G}\#\Irr(\cM_g)\ge p,
\]
and equality holds if and only if every $\cM_g$ has rank one. In particular, equality implies that $\cN=\cM_e$ has rank one. This argument does not require an invertible $\cB$-bimodule category to preserve rank. The case $S=G$ is immediate from $\cM=\widetilde{\cN}$.
\end{proof}

\begin{remark}
Two layers of data are not visible from the neutral-component module category
alone. First, \(\cN\) must extend from \(\cB\) to \(\cE_S\). In the
obstruction theory of~\cite{MeMu12}, if
\(\cN\simeq\Mod_{\cB}(A)\), this requires \(S\)-stability and a splitting of
\[
1\longrightarrow\Aut_{\cB}(\cN)
\longrightarrow\Lambda\longrightarrow S\longrightarrow1,
\]
where \(\Lambda\) consists of invertible \(A\)-\(A\)-bimodules supported in
\(\cE_S\). After a splitting is chosen, an obstruction in
\(H^3(S,\ku^\times)\) governs associative multiplication on the resulting
graded algebra \(\widetilde A=\bigoplus_{s\in S}A_s\). Second, the induced
\(\cE\)-module category must be equivalent to its \(h\)-twists for
\(h\in H\), with compatible coherence. Thus the practical inputs are the
\(\cB\)-module category, its \(\cE_S\)-extension, and the equivariant lift;
the rank-one test is then supplied by Theorem~\ref{thm:gauging_clifford_fiber_functors}.
\end{remark}

\subsection{First reductions for small radical gaugings}

We apply the preceding description to the small radical gaugings classified
in~\cite{GNradical}. Table~\ref{tab:small_radical_gaugings}, adapted from
\cite[Table~3]{GNradical}, records the non-pointed nondegenerate entries and
the conclusions proved below. We follow the notation and parameter conventions
of that reference. Here \(D_r\) has order \(2r\); the symbols \(e\), \(h\), and
\(N\) denote the elliptic, hyperbolic, and norm forms, respectively; and
\(p,r\) are distinct odd primes except in the final row.

\begingroup
\footnotesize
\setlength{\tabcolsep}{2.2pt}
\renewcommand{\arraystretch}{1.16}
\begin{longtable}{|>{\raggedright\arraybackslash}p{1.05cm}|>{\raggedright\arraybackslash}p{2.45cm}|>{\centering\arraybackslash}p{0.65cm}|>{\raggedright\arraybackslash}p{2.0cm}|>{\raggedright\arraybackslash}p{3.25cm}|>{\raggedright\arraybackslash}p{2.35cm}|}
\caption{Non-pointed small radical gaugings from~\cite[Table~3]{GNradical}.}
\label{tab:small_radical_gaugings}\\
\hline
\(\FPdim\) & mantle \(\cB\) & \(G\) & generator action & description & fiber functors \\
\hline
\endfirsthead
\hline
\(\FPdim\) & mantle \(\cB\) & \(G\) & generator action & description & fiber functors \\
\hline
\endhead
\(4\) & \(\mathcal I_\eta\) & \(1\) & -- & Ising & no \\
\hline
\(8\) & \(\mathcal I_\eta\boxtimes\cC(C_2,q_{\eta^4})\) & \(1\) & -- & Ising \(\boxtimes\) pointed & no \\
\hline
\(4r\) & \(\cC(C_r,q_\mu)\) & \(C_2\) & \(x\mapsto-x\) & metaplectic & no \\
\hline
\(16\) & \(\mathcal I_\eta\boxtimes\cC(C_2^2,h)\) & \(1\) & -- & Ising \(\boxtimes\) pointed & no \\
\hline
\(16\) & \(\mathcal I_\eta\) & \(C_2\) & trivial & Ising \(\boxtimes\) pointed & no \\
\hline
\(16\) & \(\cC(C_4,q_\xi)\) & \(C_2\) & \(x\mapsto-x\) & two Ising factors & no \\
\hline
\(16\) & \(\cC(C_2,q_i)^{\boxtimes2}\) & \(C_2\) & swap & two Ising factors & no \\
\hline
\(16\) & \(\cC(C_2^2,e)\) & \(C_2\) & swap & two Ising factors & no \\
\hline
\(16\) & \(\cC(C_2^2,h)\) & \(C_2\) & swap & two Ising factors & no \\
\hline
\(8r\) & \(\cC(C_{2r},q_{i\mu})\) & \(C_2\) & \(x\mapsto-x\) & metaplectic & no \\
\hline
\(4r^2\) & \(\VecCat_\ku\) & \(D_r\) & -- & \(\Rep(D^\omega(D_r))\), \(\omega|_{C_r}\ne1\) & no \\
\hline
\(4r^2\) & \(\cC(C_r^2,h)\) & \(C_2\) & half-turn & \(\Rep(D^\omega(D_r))\), \(\omega|_{C_r}=1\) & yes iff \([\omega]=0\) \\
\hline
\(4r^2\) & \(\cC(C_r^2,h)\) & \(C_2\) & signed swap & \(\mathcal Z(\TY(A,\chi,\tau))\) & no \\
\hline
\(4r^2\) & \(\cC(C_r^2,e)\) & \(C_2\) & half-turn & elliptic gauging & no \\
\hline
\(4r^2\) & \(\cC(C_r^2,e)\) & \(C_2\) & signed swap & elliptic gauging & no \\
\hline
\(4pr\) & \(\cC(C_{pr},q_{\zeta\mu})\) & \(C_2\) & half-turn & metaplectic & no \\
\hline
\(4pr\) & \(\cC(C_{pr},q_{\zeta\mu})\) & \(C_2\) & mixed signs & metaplectic \(\boxtimes\) pointed & no \\
\hline
\(p^2r^2\), \(p\mid r-1\) & \(\cC(C_r^2,h)\) & \(C_p\) & order-\(p\) rotation & \(\Rep(D^\omega(C_r\rtimes C_p))\) & yes for every \([\omega]\) \\
\hline
\(p^2r^2\), \(p\mid r+1\) & \(\cC(C_r^2,e)\) & \(C_p\) & order-\(p\) rotation & elliptic prime-cyclic gauging & no \\
\hline
\(36\) & \(\cC(C_2^2,e)\) & \(C_3\) & order-\(3\) rotation & elliptic gauging & no \\
\hline
\end{longtable}
\endgroup

We first exclude the rows whose non-integrality is immediate. If a fusion category admits a fiber functor, finite Tannaka--Krein reconstruction identifies it with the representation category of a semisimple Hopf algebra~\cite[Theorem~5.3.12]{EGNO15}; hence Frobenius--Perron dimensions of simple objects are ordinary vector-space dimensions, and therefore integers. This excludes the rows containing an Ising factor, the metaplectic rows, and the metaplectic-times-pointed rows.

\begin{lemma}\label{lem:gt_descends_from_equivariantization}
Let a finite group $G$ act on a fusion category $\cC$. If the equivariantization $\cC^G$ is group-theoretical, then $\cC$ is group-theoretical.
\end{lemma}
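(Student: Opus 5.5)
The plan is to use the standard relationship between $\cC$, $\cC^G$ and the semidirect product $\cC\rtimes G$, together with the fact that group-theoreticity is a Morita invariant that passes to fusion subcategories. The argument would run in three steps.

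First, recall that $\cC^G$ and $\cC\rtimes G$ are categorically Morita equivalent. This can be seen in two ways:
\begin{enumerate}
\item By Tambara's correspondence, or~\cite[Proposition~3.4]{GM11} applied with $H=\{e\}$ and $\cM=\cC$, the category $\cC$ is an indecomposable exact $\cC^G$-module category, namely $\cM^H$ for $H=G$ with $\cM=\cC$ regarded as a $G$-equivariant $\cC$-module category.
\item One has $(\cC^G)^*_{\cC}\simeq\cC\rtimes G$, which is~\cite[Proposition~3.2]{Nik08}, also available in~\cite{EGNO15}.
\end{enumerate}
Since group-theoreticity means being Morita equivalent to some $\VecCat_\Gamma^\omega$, and Morita equivalence is an equivalence relation, $\cC\rtimes G$ is group-theoretical whenever $\cC^G$ is.

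Second, observe that $\cC$ is a fusion subcategory of $\cC\rtimes G$: it is the neutral homogeneous component $(\cC\rtimes G)_e$, which is a tensor subcategory by the semidirect product formula~\eqref{eq:semidirect_product_tensor}.

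Third, invoke the theorem that fusion subcategories of group-theoretical fusion categories are group-theoretical~\cite[Proposition~8.44]{ENO05}. Its proof shows that a fusion subcategory of a group-theoretical category is the dual of a pointed category with respect to a module category obtained by restriction, and hence is itself group-theoretical. Combining this with the first two steps gives that $\cC$ is group-theoretical.

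The main obstacle is the Morita equivalence in the first step, specifically identifying the dual category correctly. I would first try to cite it directly from~\cite{Nik08} or~\cite[Example~7.12.25]{EGNO15}. If that fails, I would verify it by computing $\Fun_{\cC^G}(\cC,\cC)$. The forgetful functor $\cC^G\to\cC$ makes $\cC$ a module category, and the functors $X\otimes g_*(-)$ give $\cC^G$-module endofunctors of $\cC$, producing a tensor functor $\cC\rtimes G\to(\cC^G)^*_{\cC}$. Both sides have Frobenius--Perron dimension $|G|\FPdim(\cC)$. Fully faithfulness can be checked via the adjunction between the induction and forgetful functors for $\cC^G$. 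An equivalently clean alternative is to apply the double-dual equivalence from the Remark after~\cite[Theorem~3.27]{EO04} to the $\cC\rtimes G$-module category $\cC$, whose dual is $\cC^G$. The remaining steps are formal.
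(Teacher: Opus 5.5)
Your proof is correct, but it takes the Morita-dual route to the paper's.

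The paper's argument is one line. The forgetful functor $\cC^G\to\cC$ is a dominant tensor functor, and group-theoretical categories are closed under dominant images \cite[Proposition~8.44]{ENO05}. You instead use the Morita equivalence between $\cC^G$ and $\cC\rtimes G$ via the module category $\cC$. You then apply closure under fusion subcategories to $\cC=(\cC\rtimes G)_e$.

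These are two faces of the same fact. Dualizing with respect to $\cC$ turns the dominant functor $\cC^G\to\cC$ into the inclusion $\cC\hookrightarrow\cC\rtimes G$. This duality between surjective and injective tensor functors is what underlies the quotient part of \cite[Proposition~8.44]{ENO05}.

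The paper's version needs no identification of a dual category. Yours needs two extra inputs: the equivalence $(\cC\rtimes G)^*_{\cC}\simeq\cC^G$ \cite{Nik08, T01}, and transitivity of Morita equivalence. In exchange, you use only the subcategory half of the closure result, and the mechanism is explicit.

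Two small corrections.

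\begin{enumerate}
\item In your item (1), $\cC$ as a $\cC^G$-module category is $\cM^H$ with $H=\{e\}$ and $\cM=\cC$. Taking $H=G$ with the canonical equivariant structure on $\cC$ produces the regular module category $\cC^G$, not $\cC$. This is harmless, since item (2), or your double-dual remark, is what actually identifies the dual.
\item In your fallback computation, the $\cC^G$-module endofunctors of $\cC$ should be built from right multiplication, $Y\mapsto g_*(Y)\otimes X$. For these, the module constraint $g_*(Z\otimes Y)\otimes X\to Z\otimes g_*(Y)\otimes X$ comes from $u_g$. By contrast, $X\otimes g_*(-)$ would require moving $Z$ past $X$, which is not available.
\end{enumerate}
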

\begin{proof}
The forgetful tensor functor $\cC^G\to\cC$ is dominant, and group-theoretical fusion categories are stable under dominant tensor images~\cite[Proposition~8.44]{ENO05}.
\end{proof}

The same obstruction applies to the rows described as centers $\mathcal{Z}(\TY(A,\chi,\tau))$, where $A\simeq C_r$ and $r$ is an odd prime. Let $m$ be the non-invertible simple object of $\TY(A,\chi,\tau)$. Then $\FPdim(m)=\sqrt{|A|}=\sqrt r$. The forgetful tensor functor
\[
\mathcal{Z}(\TY(A,\chi,\tau))\to \TY(A,\chi,\tau)
\]
is dominant, and tensor functors between fusion categories preserve Frobenius--Perron dimensions. Hence $m$ appears as a direct summand of the image of some simple central object $Z$. Writing the image of $Z$ in $\TY(A,\chi,\tau)$ as a direct sum of invertible objects and copies of $m$, we get
\[
\FPdim(Z)=a+b\sqrt r
\]
with $a,b\in\mathbb{Z}_{\geq0}$ and $b>0$. This number is not an integer. Thus $\mathcal{Z}(\TY(A,\chi,\tau))$ is itself non-integral.

\begin{proposition}\label{prop:twisted_double_test}
Let $K$ be a finite group. Let $\omega\in Z^3(K,\ku^\times)$, and choose the group-theoretical presentation
\[
\mathcal{Z}(\Vect{K}^{\omega})
\simeq
(\Vect{K\times K}^{\,\widetilde{\omega}})^*_{\ModCat(\Delta K,\widetilde{\mu})},
\]
where $\Delta K\subset K\times K$ is the diagonal subgroup. Then fiber functors on $\mathcal{Z}(\Vect{K}^{\omega})$ are parameterized by conjugacy classes of pairs $(L,\nu)$, with $\nu$ understood up to multiplication by a $2$-coboundary, such that:
\begin{enumerate}
\item $L\subseteq K\times K$ is a subgroup with $\Delta K\,L=K\times K$;
\item $\nu\in C^2(L,\ku^\times)$ satisfies $d\nu=\widetilde{\omega}|_L$;
\item the cocycle obtained from $\widetilde{\mu}\nu^{-1}$ on $\Delta K\cap L$ is nondegenerate.
\end{enumerate}
In this model the cohomology class of $\widetilde{\omega}$ is $\pr_1^*[\omega]-\pr_2^*[\omega]$. In particular, if $L$ is a complement to $\Delta K$, the last condition is automatic.
\end{proposition}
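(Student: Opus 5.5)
The plan is to reduce to the standard description of fiber functors on group-theoretical fusion categories, as in Ostrik's classification of module categories over pointed categories, and then translate into the stated form.

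\emph{Step 1: reduction to module categories over $\Vect{K\times K}^{\widetilde\omega}$.} By Proposition~\ref{prop:fiber_rank_one}, fiber functors on $\mathcal{Z}(\Vect{K}^{\omega})$ correspond to rank-one exact module categories. Write $\cD=\Vect{K\times K}^{\widetilde\omega}$ and $\cM_0=\ModCat(\Delta K,\widetilde\mu)$. The center is the dual $\cD^*_{\cM_0}$. Under the Morita equivalence given by $\cM_0$, indecomposable module categories over the dual correspond to indecomposable $\cD$-module categories $\ModCat(L,\nu)$, with $d\nu=\widetilde\omega|_L$. The correspondence sends $\ModCat(L,\nu)$ to
\[
\Fun_{\cD}(\cM_0,\ModCat(L,\nu)).
\]
Equivalence of $\cD$-module categories is Ostrik's relation: conjugacy of $(L,\nu)$ under $K\times K$, with $\nu$ modified by coboundaries and by the conjugation twist. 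This gives the parametrization by conjugacy classes and the requirement $d\nu=\widetilde\omega|_L$ in item~(2).

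\emph{Step 2: counting simples.} The simple objects of $\Fun_{\cD}(\ModCat(\Delta K,\widetilde\mu),\ModCat(L,\nu))$ are indexed by pairs consisting of:
\begin{itemize}
\item a double coset $\Delta K\,g\,L$;
\item an irreducible projective representation of $\Delta K\cap gLg^{-1}$ with respect to the cocycle built from $\widetilde\mu$, $\nu^{g}$ and $\widetilde\omega$.
\end{itemize}
This is the known description of bimodule categories over pointed categories (Ostrik; Nikshych's group-theoretical work). Rank one therefore forces a single double coset, that is, $\Delta K\,L=K\times K$, which is item~(1). It also forces the twisted group algebra on $\Delta K\cap L$ to have a unique simple module. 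As in the proof of Corollary~\ref{cor:rank_one_MH}, that is exactly nondegeneracy of the cocycle obtained from $\widetilde\mu\nu^{-1}$, which is item~(3). Exactness is automatic by semisimplicity.

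\emph{Step 3: the remaining claims.} The class $[\widetilde\omega]=\pr_1^*[\omega]-\pr_2^*[\omega]$ comes from the standard model $\mathcal{Z}(\cC)\simeq(\cC\boxtimes\cC^{\mathrm{op}})^*_{\cC}$, with $\cC=\Vect{K}^\omega$ regarded as a $\cC\boxtimes\cC^{\mathrm{op}}$-module category. Passing to the opposite category inverts $\omega$, and $\cC$ itself is $\ModCat(\Delta K,\widetilde\mu)$ for a suitable $\widetilde\mu$ trivializing $\widetilde\omega|_{\Delta K}$. Finally, if $L$ is a complement to $\Delta K$, then $\Delta K\cap L$ is trivial, so the cocycle lives on the trivial group and is nondegenerate.

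\emph{Main obstacle.} The delicate step is pinning down precisely the cocycle on $\Delta K\cap L$ in Step 2, including the $\widetilde\omega$-correction term, and checking that its class depends only on the data claimed. I would cite the explicit formula from Ostrik's classification of module functors between $\ModCat(L_1,\nu_1)$ and $\ModCat(L_2,\nu_2)$, rather than rederive it.
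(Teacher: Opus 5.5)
Your proposal is correct and follows the same route as the paper, which simply cites the fiber-functor classification for group-theoretical categories in Nikshych's Section~2, applied to Ostrik's model of the center; you unpack that citation through the rank-one/Morita correspondence and the double-coset description of simple module functors. The $\widetilde\omega$-correction term you flag as the main obstacle does not arise. With a single double coset you may take the representative $g=e$, where the cocycle on $\Delta K\cap L$ is just $\widetilde\mu\nu^{-1}$, and inverting a cocycle does not affect nondegeneracy.
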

This is the fiber-functor classification for group-theoretical fusion categories~\cite[Section~2]{Nik08}, applied to Ostrik's group-theoretical model of the center of a pointed fusion category~\cite[Theorem~3.2, Proposition~3.1, Corollary~3.1 and Corollary~3.2]{Os03}. Changing $\nu$ by a $2$-coboundary gives an equivalent module category and does not affect the nondegeneracy condition.

For \(\omega=1\), the complement \(K\times\{e\}\) gives the usual
forgetful fiber functor on \(\Rep(D(K))\). For non-trivial \(\omega\),
Proposition~\ref{prop:twisted_double_test} reduces the question to the
restriction of \(\widetilde\omega\) to subgroups transverse to \(\Delta K\).

For the dihedral entries, write
\[
D_r=\langle\rho,s\mid \rho^r=s^2=e,\ s\rho s=\rho^{-1}\rangle,
\qquad R=\langle\rho\rangle,
\]
with \(r\) an odd prime. The representatives in
\cite[Section~6.3, Equation~(6.20)]{CGR00} identify
\(H^3(D_r,\ku^\times)\cong\mathbb Z/2r\mathbb Z\); denote the class indexed
by \(j\) by \([\alpha_j]\). Its restriction to \(R\) has parameter
\(-j\bmod r\), while its restriction to any reflection subgroup
\(C\cong C_2\) is the class \(j\bmod2\). Hence the row
\(\omega|_{C_r}=1\) consists of \(j=0,r\), and the other row consists of
\(j\not\equiv0\pmod r\).

\begin{lemma}\label{lem:dihedral_goursat}
Let $G=D_r$, with $r$ an odd prime, and put $R=\langle \rho\rangle$. If $L\subseteq G\times G$ is a subgroup such that $\Delta G\,L=G\times G$, then either $L=G\times H$ or $L=H\times G$ for some subgroup $H\subseteq G$, or else
\[
L=R\times C
\qquad\text{or}\qquad
L=C\times R
\]
for a reflection subgroup $C\subseteq G$.
\end{lemma}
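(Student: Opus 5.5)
The plan is to translate the condition $\Delta G\,L=G\times G$ into an orbit condition and then run through Goursat's lemma, using the fact that the subgroup lattice of $D_r$ is very small.

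\textbf{Reformulation.} Let $G\times G$ act on $G$ by $(a,b)\cdot x=axb^{-1}$. This action is transitive and the stabilizer of $e$ is $\Delta G$. Hence $\Delta G\,L=G\times G$ holds if and only if $L$ acts transitively on $G$, that is, if and only if
\[
\{ab^{-1}:(a,b)\in L\}=G.
\]
Equivalently, $[L:L\cap\Delta G]=|G|=2r$. Two quick consequences:
\begin{itemize}
\item $|L|\ge 2r$;
\item if both projections satisfy $\pr_1(L),\pr_2(L)\subseteq R$, then every $ab^{-1}$ lies in $R$, so the condition fails.
\end{itemize}

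\textbf{Goursat data.} By Goursat's lemma, $L$ is determined by the following data:
\begin{itemize}
\item the projections $A=\pr_1(L)$ and $B=\pr_2(L)$;
\item the normal subgroups $N_1=L\cap(A\times e)\trianglelefteq A$ and $N_2=L\cap(e\times B)\trianglelefteq B$;
\item an isomorphism $\theta:A/N_1\to B/N_2$.
\end{itemize}
Since $r$ is an odd prime, the subgroups of $D_r$ are $1$, the $r$ reflection subgroups $C$, $R$, and $G$. The normal subgroups of $G$ are $1$, $R$ and $G$, and the nontrivial quotients of $G$ are $G$ and $C_2$.

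\textbf{Case $A=G$.} By the symmetry swapping the two factors, the case $B=G$ is the same.
\begin{itemize}
\item If $N_1=G$, then $L=G\times B$. This is of the first form, and it always satisfies the condition.
\item If $N_1=R$, then $B/N_2\cong C_2$. So either $(B,N_2)=(G,R)$, giving the parity subgroup, or $(B,N_2)=(C,1)$, giving the graph of the sign map $G\to C$. In both cases one checks directly that every $ab^{-1}$ lies in $R$, so the condition fails.
\item If $N_1=1$, then $B=G$ and $L$ is the graph of an automorphism $\varphi$, with $\varphi(\rho)=\rho^u$ and $\varphi(s)=\rho^v s$. Since $|L|=2r$, transitivity forces $L\cap\Delta G=1$, i.e.\ $\varphi$ has no nontrivial fixed point. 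When $u\ne1$, the congruence $(u-1)m+v\equiv0 \pmod r$ has a solution, so $\varphi$ fixes the reflection $\rho^m s$. When $u=1$, $\varphi$ fixes $\rho$. Either way the condition fails.
\end{itemize}

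\textbf{Case $A,B\in\{1,C,R\}$.} Then $L\subseteq A\times B$, and $|L|\ge2r$ rules out every pair except $R\times R$, $R\times C$ and $C\times R$. The subgroups $L\subseteq R\times R$ are excluded by the second consequence above. Inside $R\times C$, the only subgroup of order at least $2r$ is $R\times C$ itself. It has order $2r$ and meets $\Delta G$ trivially, so it works; $C\times R$ is symmetric. Together these cases give exactly the list in the statement.

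\textbf{Main obstacle.} The only step needing genuine care is the automorphism subcase in the case $A=G$: one must check that no automorphism of $D_r$ is fixed-point-free on $G\setminus\{e\}$. The fixed-point computation above handles it. Everything else is bookkeeping over the short list of subgroups.
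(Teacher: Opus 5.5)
Your proof is correct and follows essentially the paper's route. You rewrite $\Delta G\,L=G\times G$ as $\{ab^{-1}:(a,b)\in L\}=G$ and then run Goursat's lemma over the short subgroup lattice of $D_r$. The differences are local: you use the orbit--stabilizer bound $|L|\ge 2r$ where the paper uses $\pr_1(L)\pr_2(L)=G$, and you exclude automorphism graphs by exhibiting fixed points. The paper instead notes that every automorphism of $D_r$ induces the identity on $G/R$, so $a^{-1}\varphi(a)\in R$; that subcase then falls to the same argument you already used for $N_1=R$.
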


\begin{proof}
The condition $\Delta G\,L=G\times G$ is equivalent to
\[
G=\{a^{-1}b\mid (a,b)\in L\}.
\]
Let $A=\pr_1(L)$ and $B=\pr_2(L)$. Then $AB=G$. Since $r$ is prime, the subgroups of $G$ are $1$, $R$, the reflection subgroups, and $G$. Hence either one of $A,B$ is $G$, or $\{A,B\}=\{R,C\}$ for a reflection subgroup $C$.

In the second case Goursat's lemma gives only the direct product, since $R$ and $C$ have no non-trivial common quotient. Thus $L=R\times C$ or $L=C\times R$.

Assume now that $A=G$. If the quotient appearing in Goursat's lemma is trivial, then $L=G\times B$. Suppose that this quotient is non-trivial. Since the normal subgroups of $G$ are $\{e\}$, $R$, and $G$, the only non-trivial quotients of $G$ are $G/R\cong C_2$ and $G$. In the quotient $G/R$, the relation defining $L$ gives $aR=bR$ for every $(a,b)\in L$. In the quotient $G$ case, necessarily $B=G$ and the relation is the graph of an automorphism of $G$; every automorphism of $D_r$ preserves $R$ and induces the identity on $G/R$, whence again $aR=bR$. Hence $a^{-1}b\in R$ for every $(a,b)\in L$, contradicting the displayed equality above. Thus only $L=G\times B$ can occur. The case $B=G$ is symmetric.
\end{proof}

\begin{proposition}\label{prop:dihedral_twisted_doubles_no_fiber}
Let $G=D_r$, with $r$ an odd prime, and let $\omega=\alpha_j$ be one of the cocycles above. Then $\mathcal{Z}(\Vect{G}^{\omega})\simeq \Rep(D^\omega(G))$ has a fiber functor if and only if $j=0$.
\end{proposition}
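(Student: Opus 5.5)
The plan is to apply Proposition~\ref{prop:twisted_double_test} with $K=G=D_r$ and to use Lemma~\ref{lem:dihedral_goursat} to reduce to a short list of subgroups $L\subseteq G\times G$. For $j=0$ the complement $G\times\{e\}$ with $\nu=1$ gives the forgetful fiber functor of $\Rep(D(G))$, so the remaining work is to show that no admissible pair $(L,\nu)$ exists when $j\neq0$. Condition~(2) of Proposition~\ref{prop:twisted_double_test} requires $\widetilde\omega|_L$ to be a coboundary, i.e.
\[
(\pr_1^*[\alpha_j]-\pr_2^*[\alpha_j])|_L=0 \quad\text{in } H^3(L,\ku^\times).
\]
I would therefore test this vanishing for each subgroup $L$ in the list of Lemma~\ref{lem:dihedral_goursat}.

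\textbf{Case $L=G\times H$ or $L=H\times G$.} Restrict further to the subgroup $G\times\{e\}$ (respectively $\{e\}\times G$). There $\pr_2^*$ (respectively $\pr_1^*$) restricts to zero, and $\pr_1$ restricts to an isomorphism onto $G$. The class therefore restricts to $\pm[\alpha_j]\in H^3(G,\ku^\times)$, which is nonzero for $j\neq0$. Since restriction of a zero class is zero, $\widetilde\omega|_L$ is not trivial.

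\textbf{Case $L=R\times C$ or $L=C\times R$.} Restrict to the factor $R\times\{e\}$ (respectively $\{e\}\times R$). The class becomes $\pm[\alpha_j|_R]$, which has parameter $-j\bmod r$ and is nonzero when $j\not\equiv0\pmod r$. If instead $j\equiv0\pmod r$, the only nonzero such index is $j=r$. In that case, restrict to $\{e\}\times C$ (respectively $C\times\{e\}$); the class becomes $\mp[\alpha_j|_C]$, which is $j\bmod 2=1$, hence nonzero. In both subcases $\widetilde\omega|_L$ is not a coboundary.

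Combining the cases, no admissible $L$ exists when $j\neq0$, so there is no fiber functor. The main point needing care is the Künneth-type reduction: I use only restriction to factor subgroups, where one projection becomes trivial and the other an isomorphism. This avoids computing $H^3$ of products. I would also check that the stated restriction parameters from~\cite{CGR00} are correct, in particular that $[\alpha_r]$ restricts nontrivially to reflection subgroups, since the case $j=r$ rests entirely on this.
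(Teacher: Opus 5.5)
Your proposal is correct and follows the paper's proof essentially step for step. You use the forgetful functor for $j=0$, then combine Proposition~\ref{prop:twisted_double_test} with Lemma~\ref{lem:dihedral_goursat}. The cases $G\times H$ and $H\times G$ are excluded via the full factor $G$, and the cases $R\times C$ and $C\times R$ via $\alpha_j|_R$ when $j\not\equiv0\pmod r$ and via $\alpha_r|_C$ when $j=r$. Your explicit restriction to the factor subgroups $R\times\{e\}$ and $\{e\}\times C$ only spells out what the paper leaves implicit.

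The point you flag for $j=r$ holds independently of the specific representatives in~\cite{CGR00}. Indeed, $H^3(D_r,\ku^\times)\cong\mathbb Z/2\times\mathbb Z/r$, and the $2$-primary part is detected by restriction to a Sylow $2$-subgroup, that is, to any reflection subgroup $C$.
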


\begin{proof}
For $j=0$ this is the ordinary double, and the usual forgetful functor is a fiber functor.

Assume $j\ne0$ and suppose that a fiber functor exists. By Proposition~\ref{prop:twisted_double_test}, there is a subgroup $L\subseteq G\times G$ with $\Delta G\,L=G\times G$ such that $\widetilde{\omega}|_L$ is a coboundary. By Lemma~\ref{lem:dihedral_goursat}, either $L=G\times H$ or $L=H\times G$, or $L=R\times C$ or $L=C\times R$ with $C$ a reflection subgroup.

In the first two cases, $L$ contains $G\times\{e\}$ or $\{e\}\times G$. Since $[\widetilde{\omega}]=\pr_1^*[\omega]-\pr_2^*[\omega]$, the restriction of $\widetilde{\omega}$ to one of these subgroups is $\omega$ or $\omega^{-1}$, which is non-trivial because $j\ne0$. This is impossible.

It remains to consider $R\times C$ and $C\times R$. If $j\not\equiv0\pmod r$, the restriction of $\omega$ to $R$ is non-trivial; hence $\widetilde{\omega}|_L$ cannot be a coboundary. If $j\equiv0\pmod r$ and $j\ne0$, then $j=r$; since $r$ is odd, the restriction of $\omega$ to $C\cong C_2$ is the non-trivial class $(-1)^{ABC}$. Again $\widetilde{\omega}|_L$ cannot be a coboundary. Thus no non-trivial class $j$ gives a fiber functor.
\end{proof}

\begin{lemma}\label{lem:crossed_component_brpic}
Let $\cB$ be a nondegenerate braided fusion category, and let
\[
\cE=\bigoplus_{g\in G}\cE_g
\]
be a faithfully graded braided $G$-crossed extension of $\cB$.  Write
$T_g\in\Aut^{\mathrm{br}}_\otimes(\cB)$ for the crossed action of $g$.  Let
\[
\Phi:\operatorname{BrPic}(\cB)
   \xrightarrow{\ \simeq\ }
   \operatorname{EqBr}(\mathcal Z(\cB))
\]
be the canonical equivalence and let
\[
\iota:\cB\boxtimes\cB^{\mathrm{rev}}
   \xrightarrow{\ \simeq\ }
   \mathcal Z(\cB)
\]
be the braided equivalence determined by the braiding of $\cB$.
Then the invertible $\cB$-bimodule category $\cE_g$ satisfies
\begin{equation}\label{eq:crossed_component_brpic_formula}
\iota^{-1}\Phi([\cE_g])\iota
\simeq
\id_{\cB}\boxtimes T_g.
\end{equation}
\end{lemma}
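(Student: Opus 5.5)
We will compute $\Phi([\cE_g])$ directly from the ENO description of the equivalence $\operatorname{BrPic}(\cB)\simeq\operatorname{EqBr}(\mathcal Z(\cB))$ and then check the formula on the two tensor factors $\cB\boxtimes 1$ and $1\boxtimes\cB^{\mathrm{rev}}$ separately. Recall from \cite{ENO10} that for an invertible $\cB$-bimodule category $\cM$, the braided autoequivalence $\Phi([\cM])$ sends $Z\in\mathcal Z(\cB)$ to the unique $Z'$ (up to isomorphism) such that the left action functor $Z\overline\otimes-$ and the right action functor $-\overline\otimes Z'$ are isomorphic as $\cB$-bimodule endofunctors of $\cM$. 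The central structures of $Z$ and $Z'$ are what make these functors bimodule functors. Under $\iota$, an object $X\boxtimes Y\in\cB\boxtimes\cB^{\mathrm{rev}}$ goes to $X\otimes Y$, with half-braiding $c_{X,-}$ on $X$ and $c^{-1}_{-,Y}$ on $Y$.

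The key input is the structure of $\cE_g$ inside the crossed category. The $\cB$-bimodule structure of $\cE_g$ is given by tensor product in $\cE$. The crossed braiding supplies natural isomorphisms
\[
c_{X,M}:X\otimes M\xrightarrow{\sim}\rho_e(M)\otimes X,\qquad
c_{M,X}:M\otimes X\xrightarrow{\sim}\rho_g(X)\otimes M=T_g(X)\otimes M
\]
for $X\in\cB$ and $M\in\cE_g$. Since $\rho_e=\id$, these satisfy the crossed hexagon axioms. We will use them as follows.

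\textbf{First factor.} For $X\in\cB$ with half-braiding $c_{X,-}$ (the image of $X\boxtimes\mathbf1$), the isomorphism $X\otimes M\cong M\otimes X$ given by the inverse crossed braiding $c^{-1}_{X,M}$ (using $\rho_e=\id$) is a bimodule natural isomorphism between left action by $X$ and right action by $X$. Compatibility with the bimodule structure follows from the hexagon axioms. Hence $\Phi([\cE_g])(X\boxtimes\mathbf1)\cong X\boxtimes\mathbf1$.

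\textbf{Second factor.} For $Y$ with reverse half-braiding (the image of $\mathbf1\boxtimes Y$), we use $c_{M,Y}:M\otimes Y\cong T_g(Y)\otimes M$. Read backwards, this identifies left action by $T_g(Y)$ with right action by $Y$. To match the orientation of $\Phi$, we replace $g$ by $g^{-1}$ or rewrite $c_{M,Y}$ as $c_{M,T_g^{-1}(Y')}$. The outcome is $\Phi([\cE_g])(\mathbf1\boxtimes Y)\cong\mathbf1\boxtimes T_g(Y)$, up to the convention fixing left versus right in $\Phi$. We will also check that the crossed-braiding hexagons make these isomorphisms bimodule-natural and monoidal in $Y$. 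Since $T_g$ is a braided autoequivalence, $\mathbf1\boxtimes T_g$ is a braided autoequivalence of $\cB\boxtimes\cB^{\mathrm{rev}}$.

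\textbf{Assembly.} Both descriptions are natural and monoidal. Combining them via the tensor structure of $\Phi([\cE_g])$ gives $\iota^{-1}\Phi([\cE_g])\iota\simeq\id_{\cB}\boxtimes T_g$ as braided autoequivalences. The monoidal structures agree because the constraints in both cases come from the same crossed braiding, and the braided equivalence is determined up to isomorphism by these data.

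The main obstacle is the bookkeeping of conventions. We must decide whether $\Phi$ uses left-versus-right or right-versus-left action, and which factor $\iota$ assigns to $\cB^{\mathrm{rev}}$. A wrong choice would produce $T_g^{-1}$, or $T_g$ on the first factor. We will fix these by first testing the case $\cE=\cB\rtimes G$ with trivial crossed braiding twist, where $\cE_g=\cB$ with right action twisted by $T_g$ and $\Phi$ is computable by hand. The general case is then the same computation with the crossed braiding in place of the identity.
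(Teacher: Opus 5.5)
Your route differs from the paper's. You compute $\Phi([\cE_g])$ directly from its defining property, one tensor factor at a time. The paper instead proves $\partial_{\cE_g}=T_g$ for alpha-induction and imports from \cite{DN13} how $\Phi$ looks on $\operatorname{Pic}(\cB)$. Your hexagon checks are sound:
\begin{itemize}
\item $c_{X,M}$ is a bimodule isomorphism between the left and right actions of $\iota(X\boxtimes\mathbf 1)$;
\item $c_{M,Y}\colon M\otimes Y\to T_g(Y)\otimes M$ is a bimodule isomorphism between the right action of $\iota(\mathbf 1\boxtimes Y)$ and the left action of $\iota(\mathbf 1\boxtimes T_g(Y))$.
\end{itemize}
The assembly step also works, since a braided functor out of $\cB\boxtimes\cB^{\mathrm{rev}}$ is determined by its restrictions to the two mutually centralizing factors.

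The gap is the orientation, which is the only content separating the statement from its variant with $T_g^{-1}$. You adopt the convention $Z\overline\otimes-\cong-\overline\otimes\Phi([\cM])(Z)$, i.e.\ $\Phi=R^{-1}L$. With it, your own isomorphism gives $\Phi([\cE_g])(\mathbf 1\boxtimes T_g(Y))\cong\mathbf 1\boxtimes Y$, hence $\id_{\cB}\boxtimes T_g^{-1}$, not the stated formula. Neither proposed repair changes this. Replacing $g$ by $g^{-1}$ swaps $\cE_g$ for the different bimodule category $\cE_{g^{-1}}$. Writing $Y=T_g^{-1}(Y')$ is only a relabeling and still yields $T_g^{-1}$.

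The calibration through $\cB\rtimes G$ also fails, for two reasons.
\begin{itemize}
\item It is not a braided $G$-crossed extension unless $T_g$ fixes every simple object. A crossed braiding would give $[Y\otimes M,g]=[Y,e]\otimes[M,g]\cong[M,g]\otimes[Y,e]=[M\otimes T_g(Y),g]$, and taking $M=\mathbf 1$ forces $Y\cong T_g(Y)$.
\item Its component ${}_{\cB}\cB_{T_g}$ has $\Phi$ equal to $\mathcal Z(T_g)^{\pm1}$, which $\iota$ turns into $(T_g\boxtimes T_g)^{\pm1}$. That is not of the form $\id_{\cB}\boxtimes T^{\pm1}$ at all, so it cannot fix the sign.
\end{itemize}
In any case, a convention is fixed by the definition of $\Phi$, not by testing an example.

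The paper settles the orientation as follows. It derives $\partial_{\cE_g}=T_g$ from $c_{X,Y}\circ c_{Y,X}$, with $\partial$ normalized by $\alpha^-\circ\partial\simeq\alpha^+$. It then takes from \cite{DN13} that $\Phi$ on $\operatorname{Pic}(\cB)$ is trivial on the first copy and equals $\partial$ on $\cB^{\mathrm{rev}}$. Your right-action computation is exactly
\[
R_{\iota(\mathbf 1\boxtimes Y)}\cong\alpha^+_{\cE_g}(Y)\cong\alpha^-_{\cE_g}(T_g(Y))=L_{\iota(\mathbf 1\boxtimes T_g(Y))},
\]
so that normalization amounts to $\Phi=L^{-1}R$. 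If you adopt $\Phi=L^{-1}R$ explicitly and cite it, your direct computation becomes a self-contained proof that reproves the \cite{DN13} input for crossed components. Under your stated normalization the correct conclusion is $\id_{\cB}\boxtimes T_g^{-1}$. That is harmless for the paper's later uses, which only involve $\ker(1-\theta)$ and invariance of Lagrangian graphs, but it is not the formula asserted.
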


\begin{proof}
By~\cite[Theorem~7.12]{ENO10}, the braided $G$-crossed structure determines a
morphism of categorical $2$-groups
\[
G\longrightarrow\operatorname{Pic}(\cB),
\qquad
 g\longmapsto[\cE_g].
\]
Recall that the alpha-induction homomorphism
\[
\partial:\operatorname{Pic}(\cB)
   \longrightarrow\Aut^{\mathrm{br}}_\otimes(\cB)
\]
is characterized by
$\alpha^-_{\cM}\circ\partial_{\cM}\simeq\alpha^+_{\cM}$ for every
invertible $\cB$-module category $\cM$~\cite[Section~5.4]{ENO10}.
For $X\in\cE_g$ and $Y\in\cB$, the composite of crossed braidings
\[
Y\otimes X
\xrightarrow{\ c_{Y,X}\ }
X\otimes Y
\xrightarrow{\ c_{X,Y}\ }
T_g(Y)\otimes X
\]
is a module-natural isomorphism
$\alpha^+_{\cE_g}(Y)\simeq\alpha^-_{\cE_g}(T_g(Y))$; the crossed hexagon
identities give the module-functor compatibility.  Hence
\[
\partial_{\cE_g}=T_g.
\]
Equivalently, this is the relation between the categorical $2$-group morphism
in~\cite[Theorem~7.12]{ENO10} and the structural map of the Picard crossed
module.

Under the Brauer--Picard equivalence, the image of
$\operatorname{Pic}(\cB)$ consists of the braided autoequivalences of
$\mathcal Z(\cB)$ that are trivializable on the first copy of $\cB$, while the
restriction to the second copy $\cB^{\mathrm{rev}}$ is the map $\partial$;
see~\cite[Lemma~4.4 and Corollary~4.9]{DN13}.  Since $\cB$ is
nondegenerate, the inverse construction in~\cite[Theorem~5.2]{ENO10} sends a
braided autoequivalence $T$ of $\cB$ to the bimodule category whose center
autoequivalence is $\id_{\cB}\boxtimes T$.  Taking $T=T_g$ proves~\eqref{eq:crossed_component_brpic_formula}.

Finally, in the extension classification of
\cite[Theorems~1.3 and~8.9]{ENO10}, the homomorphism
\[
c:G\longrightarrow\operatorname{BrPic}(\cB),
\qquad c(g)=[\cE_g],
\]
is fixed before the final $H^3(G,\ku^\times)$-parameter is chosen.  The latter
changes the associativity coherence but not the classes of the homogeneous
components.  This is also the convention in~\cite[Definition~2.1]{GPR24},
where the crossed action is fixed first and the parameter $\alpha$ labels the
remaining $H^3$-choices.
\end{proof}

\begin{lemma}\label{lem:pointed_defect_sector}
Let $A$ be a finite abelian group of odd order, let $\cB=\cC(A,q)$, and let
\[
\cE=\bigoplus_{g\in G}\cE_g
\]
be a braided $G$-crossed extension of $\cB$. Fix $g\in G$, and let
$\theta\in O(A,q)$ be the automorphism induced by $g_*$ on
$A=\Inv(\cB)$. Then, for every simple object $X\in\cE_g$, the following hold:
\begin{enumerate}
\item $A$ acts transitively on $\Irr(\cE_g)$ by left tensor product.
\item The stabilizer of $X$ is
\[
\St_A(X):=\{a\in A\mid a\otimes X\simeq X\}=(1-\theta)A.
\]
\item The assignment
\[
a+(1-\theta)A\longmapsto a\otimes X
\]
defines a bijection
\[
A/(1-\theta)A \xrightarrow{\ \simeq\ } \Irr(\cE_g).
\]
In particular,
\[
\#\Irr(\cE_g)=|\ker(1-\theta)|.
\]
\item The Frobenius--Perron dimension of $X$ is
\[
\FPdim(X)=\sqrt{|(1-\theta)A|}.
\]
\end{enumerate}
\end{lemma}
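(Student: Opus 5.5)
The plan is to view $\cE_g$ as an invertible $\cB$-bimodule category, hence as an indecomposable left $\cB$-module category, and read off its simple objects from the structure of module categories over the pointed category $\cB=\cC(A,q)$. Since $\cB\simeq\Vect{A}^{\omega}$ as a fusion category (with $\omega$ trivializable on the relevant subgroups, because $|A|$ is odd and $q$ gives an abelian $3$-cocycle), indecomposable module categories over $\cB$ are of the form $\ModCat(L,\psi)$ for a subgroup $L\subseteq A$. In such a category, $A$ acts transitively on simple objects, with common stabilizer $L$. This gives item (1) immediately, and it reduces (2)--(3) to identifying $\St_A(X)$ with $(1-\theta)A$. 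The first assertion of (3) then follows from the orbit--stabilizer theorem. For the counting statement, use $|A/(1-\theta)A|=|\ker(1-\theta)|$ for the endomorphism $1-\theta$ of the finite group $A$.

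To identify the stabilizer, I would use the crossed braiding. For $a\in A$ and $X\in\cE_g$, the crossed braiding gives $a\otimes X\simeq X\otimes a$ and $X\otimes a\simeq g_*(a)\otimes X=\theta(a)\otimes X$. Hence $(a-\theta(a))\otimes X\simeq X$, so $(1-\theta)A\subseteq\St_A(X)$.

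For the reverse inclusion, I plan a dimension count using Lemma~\ref{lem:crossed_component_brpic}. Faithfulness gives $\FPdim(\cE_g)=\FPdim(\cB)=|A|$. Transitivity gives $\FPdim(\cE_g)=\#\Irr(\cE_g)\cdot\FPdim(X)^2=[A:\St_A(X)]\,\FPdim(X)^2$. It therefore suffices to show $\#\Irr(\cE_g)=|\ker(1-\theta)|$, or equivalently $\FPdim(X)^2=|(1-\theta)A|$.

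To get the rank, I would use the rank of an invertible bimodule category. The rank of $\cE_g$ equals the number of simple objects of $\cB$ fixed (up to iso) by the associated braided autoequivalence. Concretely, the rank of $\cE_g$ as a $\cB$-bimodule equals the number of simple objects of $\mathcal Z(\cB)\simeq\cB\boxtimes\cB^{\mathrm{rev}}$ mapped to themselves under $\Phi([\cE_g])$. I plan to replace this with the standard formula that, for invertible $\cM$, $\#\Irr(\cM)$ equals the trace of the induced permutation on $\Irr(\cB)$ via $\partial_{\cM}=T_g$. Lemma~\ref{lem:crossed_component_brpic} identifies $\Phi([\cE_g])$ with $\id\boxtimes T_g$, so this trace is the number of $a\in A$ with $\theta(a)=a$, namely $|\ker(1-\theta)|$. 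Alternatively, one can compute via the Verlinde-type formula $\#\Irr(\cE_g)=\#\{\text{simples of }\cB\text{ fixed by }T_g\}$ for $G$-crossed extensions of nondegenerate categories (ENO10). I expect this rank identification to be the main obstacle, and I would first try to cite it directly from the $G$-crossed literature.

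Once the rank is known, combining it with the inclusion $(1-\theta)A\subseteq\St_A(X)$ forces $[A:\St_A(X)]=|\ker(1-\theta)|=[A:(1-\theta)A]$, hence equality of the two subgroups. This proves (2) and (3). Item (4) then follows from $|A|=|\ker(1-\theta)|\,\FPdim(X)^2$, which gives $\FPdim(X)^2=|A|/|\ker(1-\theta)|=|(1-\theta)A|$.
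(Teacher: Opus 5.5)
Your argument is correct, and its skeleton matches the paper's. Transitivity comes from indecomposability of $\cE_g$ over the pointed category $\cB$. The inclusion $(1-\theta)A\subseteq\St_A(X)$ comes from the crossed braiding. Orbit--stabilizer finishes once $\#\Irr(\cE_g)=|\ker(1-\theta)|$ is known.

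The difference lies in how you obtain the rank and $\FPdim(X)$.
\begin{itemize}
\item The paper uses that $|A|$ is odd to pass to the bicharacter braiding $c(a,b)=b_q(a/2,b)$. It writes $\id_{\cB}\boxtimes T_\theta$ as the explicit matrix $g_\theta$ on $A\oplus\widehat A$ and reads off $\bigl|\ker(Pg_\theta|_{\widehat A})\bigr|=|\ker(1-\theta)|$ from \cite[Proposition~10.8]{ENO10}. It then gets $\FPdim(X)$ locally, from $X\otimes X^*\simeq\bigoplus_{a\in(1-\theta)A}a$ via Frobenius reciprocity.
\item You instead cite the general fact that, in a faithful braided $G$-crossed extension of a nondegenerate $\cB$, the rank of $\cE_g$ equals the number of $T_g$-fixed simple objects of $\cB$. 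You get $\FPdim(X)$ globally, from $\FPdim(\cE_g)=\FPdim(\cB)=|A|$ together with transitivity.
\end{itemize}
Your route is shorter, and as far as I can see it never needs $|A|$ odd. Your appeal to $\cB\simeq\Vect{A}$ in item (1) can be replaced by the observation that each $A$-orbit spans a module summand. The price is that it rests on a heavier general theorem. The paper's route needs only the explicit rank formula for invertible $\Vect{A}$-bimodule categories and Lemma~\ref{lem:crossed_component_brpic} exactly as stated.

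One sentence in your plan is wrong and should be deleted: the rank of $\cE_g$ is \emph{not} the number of simple objects of $\mathcal Z(\cB)\simeq\cB\boxtimes\cB^{\mathrm{rev}}$ fixed by $\Phi([\cE_g])$. Already for the trivial bimodule $\cB$ this count is $\#\Irr(\cB)^2$ rather than $\#\Irr(\cB)$. Here it would give $|A|\cdot|\ker(1-\theta)|$.

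The quantity that does compute the rank at the level of $\mathcal Z(\cB)$ is $\dim\Hom(I(\mathbf 1),\Phi(I(\mathbf 1)))$. Since $I(\mathbf 1)\simeq\bigoplus_Y Y\boxtimes Y^*$ and $\Phi\simeq\id_{\cB}\boxtimes T_g$, this collapses to your fixed-point count on $\Irr(\cB)$. In the pointed case it is exactly the number $\bigl|\ker(Pg|_{\widehat A})\bigr|$ that the paper takes from \cite[Proposition~10.8]{ENO10}. So your replacement formula is the right one to cite.
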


\begin{proof}
By~\cite[Theorem~6.1]{ENO10}, $\cE_g$ is an invertible, hence
indecomposable, $\cB$-module category. Since the simple objects of $\cB$
are the elements of $A$, tensoring by them permutes $\Irr(\cE_g)$. Each
$A$-orbit spans a $\cB$-module direct summand, so indecomposability implies
that this action is transitive.

The crossed braiding gives, for $a\in A$ and $X\in\cE_g$, isomorphisms
\[
a\otimes X\xrightarrow{\ c_{a,X}\ }X\otimes a
\xrightarrow{\ c_{X,a}\ }\theta(a)\otimes X.
\]
Tensoring on the left by $\theta(a)^{-1}$ shows that
$(a-\theta(a))\otimes X\simeq X$. Hence
\[
(1-\theta)A\subseteq\St_A(X).
\]

It remains to compute the number of simple objects. Write
\[
b_q(a,b)=q(a+b)q(a)^{-1}q(b)^{-1}
\]
for the symmetric bicharacter associated with $q$. Since $|A|$ is odd,
doubling is an automorphism of $A$; put
\[
c(a,b):=b_q(a/2,b).
\]
Then $c$ is a nondegenerate symmetric bicharacter, $c(a,a)=q(a)$, and
$\theta$ preserves $c$. Thus we may use the braided model in which the
underlying fusion category of $\cB$ is $\Vect{A}$ and the braiding is $c$.
Use $c$ to identify $\widehat A$ with $A$. Under the braided
equivalence
\[
\cB\boxtimes\cB^{\mathrm{rev}}\simeq\mathcal Z(\Vect{A}),
\qquad
(x,y)\longmapsto\bigl(x+y,c(x-y,-)\bigr),
\]
the autoequivalence $\id_{\cB}\boxtimes T_\theta$ is represented on
$A\oplus\widehat A\simeq A\oplus A$ by
\[
g_\theta=
\frac12
\begin{pmatrix}
1+\theta & 1-\theta\\
1-\theta & 1+\theta
\end{pmatrix},
\]
where $\frac12$ denotes the inverse of doubling on $A$. Indeed, the
coordinates $(u,v)=(x+y,x-y)$ are sent to
$(x+\theta(y),x-\theta(y))$. By Lemma~\ref{lem:crossed_component_brpic}, the invertible
$\Vect{A}$-bimodule category underlying $\cE_g$ is the one represented by
$\id_{\cB}\boxtimes T_\theta$, and hence by $g_\theta$ in these coordinates.
If $P:A\oplus\widehat A\to A$ is the first projection, then
\[
P\,g_\theta(0,v)=\frac12(1-\theta)v.
\]
Therefore~\cite[Proposition~10.8]{ENO10} gives
\[
\#\Irr(\cE_g)
=\bigl|\ker(P\,g_\theta|_{\widehat A})\bigr|
=|\ker(1-\theta)|
=|A/(1-\theta)A|.
\]
Transitivity now yields
\[
|A:\St_A(X)|=|\ker(1-\theta)|,
\]
so
\[
|\St_A(X)|=\frac{|A|}{|\ker(1-\theta)|}=|(1-\theta)A|.
\]
Together with the inclusion above, this proves
$\St_A(X)=(1-\theta)A$ and the asserted parametrization of simple
objects.

Finally, put $I=(1-\theta)A$. For $a\in A$, rigidity gives
\[
\Hom_{\cB}(a,X\otimes X^*)
\simeq
\Hom_{\cE_g}(a\otimes X,X).
\]
Since $X$ is simple, the latter space is one-dimensional for $a\in I$
and zero for $a\notin I$. Therefore
\[
X\otimes X^*\simeq\bigoplus_{a\in I}a,
\]
and hence
\[
\FPdim(X)^2=|I|=|(1-\theta)A|.
\]
\end{proof}

\begin{proposition}\label{prop:elliptic_four_r_squared_no_fiber}
Let $A=C_r^2$, with $r$ an odd prime. The two rows of Table~\ref{tab:small_radical_gaugings} with mantle $\cC(A,e)$ and $G=C_2$ admit no fiber functor.
\end{proposition}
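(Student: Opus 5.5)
We apply Corollary~\ref{cor:prime_cyclic_gauging} with $p=2$, $G=C_2=\langle\tau\rangle$ and $\cB=\cC(A,e)$, where $A=C_r^2$ carries the elliptic form. The generator acts on $A$ by some $\theta\in O(A,e)$ of order two, namely the half-turn $\theta=-1$ or a signed swap. We rule out the three possibilities listed in the corollary one at a time. Lemma~\ref{lem:pointed_defect_sector} is the main tool: it computes $\#\Irr(\cE_\tau)=|\ker(1-\theta)|$ and $\FPdim$ of the simples of $\cE_\tau$.

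First consider case (1), where $H=\{e\}$ and we need a rank-one $\cE$-module category. Restricting it to $\cB$ gives a rank-one $\cC(A,e)$-module category, that is, a fiber functor on the pointed category $\Vect{A}^{\omega}$ underlying $\cB$. I would then exploit the braiding. A rank-one $\cB$-module category is $\ModCat(L,\psi)$ with $L=A$. Moreover, a fiber functor on a braided pointed category $\cC(A,q)$ forces $\omega$ to be trivial. The reason is that the associator class of $\cC(A,q)$ is determined by $q$ through the Eilenberg--MacLane map $H^3_{ab}\to H^3$, so that class must vanish. Since $r$ is odd, this vanishing is automatic and gives no obstruction at this stage. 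The real input therefore comes from the extension. Suppose $\cE$ admits a fiber functor. Then $\cE^{C_2}$ is integral, and it is in fact group-theoretical, because $\cE$ is Morita equivalent to something pointed via the Lagrangian structure. My plan is instead to use the standard fact that a braided $G$-crossed extension admitting a fiber functor makes $\cE^G$ contain $\Rep(G)$ together with a Lagrangian-type subcategory. Concretely, a fiber functor on $\cE^{G}$, which is nondegenerate braided, makes it $\Rep$ of a factorizable Hopf algebra. Its restriction to the centralizer $\cB=\Rep(G)'/\Rep(G)$ then yields a fiber functor on $\cB$ compatible with the braiding. The cleaner route is the invariant-Lagrangian criterion: the gauging $\cE^G$ has a fiber functor in case (1) iff there is a $\theta$-stable ``decomposition'' of $A$ into Lagrangian data. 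Because $e$ is elliptic, $(A,e)$ has no isotropic line, hence no Lagrangian subgroup, so this criterion fails.

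Next consider case (2a), where $S=\{e\}$, $\cN$ has rank one, and every $\cM_g$ has rank one. Lemma~\ref{lem:crossed_component_brpic} identifies $\cM_\tau=\cE_\tau\boxtimes_\cB\cN$ with the image of $\cN$ under the bimodule whose central autoequivalence is $\id\boxtimes T_\theta$. The rank of $\cM_\tau$ is then computed from $g_\theta$ and the Lagrangian $\cL\subset A\oplus\widehat A$ attached to $\cN$; this uses \cite[Prop.~10.8]{ENO10}, exactly as in the proof of Lemma~\ref{lem:pointed_defect_sector}. Rank-one $\cB$-module categories correspond to Lagrangians $\cL$ in $A\oplus\widehat A$ that are complementary to $\widehat A$. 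Requiring both $\cL$ and $g_\theta\cL$ to be complements to $\widehat A$ is then the condition to test. For $\theta=-1$ we have $g_\theta=\bigl(\begin{smallmatrix}0&1\\1&0\end{smallmatrix}\bigr)$, so $g_\theta\cL$ is a complement to $\widehat A$ iff $\cL\cap A=0$. Combined with the Lagrangian condition relative to the elliptic form, this forces a Lagrangian (isotropic) subgroup of $(A,e)$, and none exists. For the signed swap, the analogous linear-algebra computation should again reduce to finding an isotropic vector of $e$. I expect this reduction to be the main obstacle, and I would first try it by writing $\cL$ as the graph of a symmetric map $A\to\widehat A$.

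Finally, case (2b) asks for an $\cE$-module category of rank two on which $\tau$ swaps the simples. Its restriction to $\cB$ consists of $\Vect{A}$-module categories over subgroups of $A$. I would show that the resulting equivariantization is pointed-compatible only if an isotropic line exists. As a fallback, this case can be excluded by the non-group-theoreticity theorem of \cite{GPR24} for elliptic gaugings. The argument is that every fiber functor makes a Hopf algebra of dimension $4r^4$, and a counting argument via \cite{JL09} then shows that such a Hopf algebra is group-theoretical, contradicting \cite{GPR24}.
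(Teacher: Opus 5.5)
The decisive gap is in your case (2a) for the half-turn $\theta=-\id_A$. There you claim that rank one of $\cM_e$ and $\cM_\tau$ forces an isotropic line of $(A,e)$, and that step is false.

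The Lagrangians $\mathcal L$ in question live in $A\oplus\widehat A$, with the (always hyperbolic) form of $\mathcal Z(\Vect{A})$, not in $(A,e)$. A Lagrangian complementary to both $A$ and $\widehat A$ is simply the graph of an isomorphism $f:A\to\widehat A$ with $f(a)(a)=1$, i.e.\ a nondegenerate alternating bicharacter. Such bicharacters exist on $C_r^2$ whatever $e$ is. They are exactly the $\cN=\ModCat(A,\mu)$ with $[\mu]\neq0$. Inducing them gives rank-two $\TY(A,\chi,\tau)$-module categories restricting to $\ModCat(A,\mu)\oplus\ModCat(A,\mu')$, and these exist for elliptic $\chi$ too. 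So the rank conditions alone yield no contradiction.

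What excludes this case is a requirement you never use: the generator must interchange the two simple objects. Then $U_\tau:\cM\to\cM^{\tau}$ restricts to a $\Vect{A}$-module equivalence $\cN^{\tau^{-1}}\simeq\cM_\tau$. Since $-\id_A$ acts trivially on $H^2(A,\ku^\times)$, this says $\ModCat(A,\mu)\simeq\ModCat(A,\mu')$. Write $\Alt(\mu)=\lambda J$. The translate formula $\Alt(\mu')=-\det(S)\lambda^{-1}J$ of \cite[Lemma~9.1]{MeMu12} then forces $\lambda^2=-\det S$, i.e.\ $\chi$ hyperbolic. This is the paper's argument. In your coordinates the correct condition is $g_\theta\mathcal L=\mathcal L$, not just that $g_\theta\mathcal L$ is complementary to $\widehat A$. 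For $\mathcal L$ the graph of $f$ it reads $f^2=\id$, which holds only when minus the determinant of the form is a square in $\mathbb F_r$.

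Several other steps also fail.

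\begin{enumerate}
\item \emph{Signed-swap row.} You leave the computation undone and expect it to reduce to isotropic vectors of $e$. In fact the row is excluded for a reason unrelated to ellipticity. Here $\theta\neq\pm\id_A$ is an involution of $\mathbb F_r^2$, so $1-\theta$ has rank one. Lemma~\ref{lem:pointed_defect_sector} then gives defect simples of Frobenius--Perron dimension $\sqrt r$. Simples of the gauging lying over them have dimension a positive integer multiple of $\sqrt r$, so the gauging is not integral.
\item \emph{Case (2b).} Your fallback is wrong on several counts. The gauging has dimension $4r^2$, not $4r^4$. \cite{JL09} concerns dimension $pq^2$. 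And non-group-theoreticity cannot obstruct fiber functors here: Nikshych's $H_r$ (Section~\ref{sec:nikshych}) is a non-group-theoretical semisimple Hopf algebra of dimension exactly $4r^2$. The correct argument is immediate. The restriction of $\widetilde{\cN}$ to $\cB$ would be an indecomposable $\Vect{A}$-module category of rank two, but these have rank $[A:L]\in\{1,r,r^2\}$.
\item \emph{Case (1).} Your \emph{invariant-Lagrangian criterion} is asserted, not proved. The remarks about group-theoreticity and restricting to $\Rep(G)'/\Rep(G)$ are unjustified. What is needed is the following. For $\theta=-\id_A$, the defect component has a unique simple $m$, which is self-dual, fixed by $A$, and of dimension $r$. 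Hence $m\otimes m\simeq\bigoplus_{a\in A}a$, and $\cE\simeq\TY(A,\chi,\tau)$ with $\chi$ elliptic. Tambara's criterion \cite{T00} then rules out a fiber functor on $\cE$.
\end{enumerate}
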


\begin{proof}
We identify $A$ with a two-dimensional vector space over $\mathbb{F}_r$ and
let $\theta$ be the involution defining the action. Lemma~\ref{lem:pointed_defect_sector}
applies to the non-trivial homogeneous component.

If $\theta$ is one of the non-central involutions in the row
$(x,y)\mapsto(\pm y,\pm x)$, then $1-\theta$ has rank one over
$\mathbb{F}_r$; hence $(1-\theta)A$ has order $r$. The defect sector
contains simple objects of Frobenius--Perron dimension $\sqrt r$. A simple
object in the equivariantization lying over such a defect has dimension
$[C_2:\St_{C_2}(X)]\dim(\pi)\sqrt r$, where $\pi$ is the corresponding
irreducible projective representation of the stabilizer; this is the
orbit--stabilizer construction for simple equivariant objects recalled in
\cite[Subsection~7.5.1]{GP17}. Thus its dimension
is a positive integer multiple of $\sqrt r$, and the integrality obstruction
excludes this row.

It remains to consider the half-turn $\theta=-\id_A$. In this case
$(1-\theta)A=A$, so the non-trivial component has a unique simple object
$m$ of dimension $r$. Since the grading group has order two, $m^*$ lies in
the same component and hence $m^*\simeq m$. Also, every $a\in A$ fixes
$m$. Frobenius reciprocity therefore gives
\[
[m\otimes m^*:a]
=\dim\Hom(a\otimes m,m)=1
\qquad (a\in A).
\]
Thus
\[
m\otimes m\simeq\bigoplus_{a\in A}a,
\]
so the crossed extension has the Tambara--Yamagami fusion rules and is a
category $\TY(A,\chi,\tau)$. Since the row has elliptic metric group
$\cC(A,e)$, the associated symmetric form has no Lagrangian; equivalently,
the bilinear form $\chi$ in this Tambara--Yamagami category is elliptic.
We apply Theorem~\ref{thm:main_classification} to the equivariantization
by the induced action $T_{-\id_A}$.

The subgroup $H=\{e\}$ would give a fiber functor on $\TY(A,\chi,\tau)$ itself, which is impossible by Tambara's criterion~\cite[Propositions~4.1 and~4.2]{T00}, since $\chi$ is elliptic. Thus a fiber functor would have to come from $H=C_2$. Since $H^2(C_2,\ku^\times)=0$, Corollary~\ref{cor:cohomologically_trivial} forces an equivariant rank-two $\TY(A,\chi,\tau)$-module category, indecomposable as a $\TY(A,\chi,\tau)$-module category, on which the non-trivial element acts freely on the two simple objects.

To exclude this case, use the classification of module categories over Tambara--Yamagami categories~\cite[Section~9]{MeMu12}. In the form used in~\cite[Lemma~4.3 and Remark~4.5]{Nik08}, such a module category restricts to $\Vect{A}$ as
\[
\ModCat(A,\mu)\oplus \ModCat(A,\mu'),
\]
where $\ModCat(A,\mu)$ is the pointed $\Vect{A}$-module category attached to $\mu$, $[\mu]\ne0$ in $H^2(A,\ku^\times)$, and $\mu'$ is determined by $\chi$ and $\mu$. By~\cite[Lemma~9.1]{MeMu12}, the $\sigma$-translate of $\ModCat(H,\psi)$ is $\ModCat(R^\perp,\widetilde\psi)$, where $R=\operatorname{Rad}(\psi)$, $\xi_\psi$ is the alternating bicharacter associated with $\psi$, and $\widetilde\psi(a,b)=\psi(t_b,t_a)$ for elements $t_a$ satisfying $\xi_\psi(t_a,-)=\chi(a,-)$. Specializing this formula to $H=A$ and $R=\{0\}$, and writing the alternating form $\Alt(\mu)$ as $\lambda J$ and $\chi$ as a symmetric matrix $S$, gives the matrix formula used in~\cite[Remark~4.5]{Nik08}:
\[
\Alt(\mu')=\frac{-\det(S)}{\lambda}J.
\]
Since cohomology classes in $H^2(A,\ku^\times)$ are determined by their alternating forms, $[\mu]=[\mu']$ would imply $\lambda^2=-\det(S)$. Thus $\ModCat(A,\mu)$ and $\ModCat(A,\mu')$ are equivalent only if $-\det(S)$ is a square in $\mathbb{F}_r$, which is exactly the condition that $\chi$ be hyperbolic over $\mathbb{F}_r$. Since $\chi$ is elliptic, the two $\Vect{A}$-summands are not equivalent.

The automorphism $-\id_A$ acts trivially on $H^2(A,\ku^\times)$. If a $C_2$-equivariant structure on the rank-two module category acted freely on simple objects, the corresponding module equivalence
\[
U:\cM\to\cM^{T_{-\id_A}}
\]
would interchange the two simple objects. After restriction to $\Vect{A}$, this would force an equivalence $\ModCat(A,\mu)\simeq \ModCat(A,\mu')$, contradicting the preceding paragraph. Hence no such equivariant structure can act transitively on the two simple objects. This contradicts Corollary~\ref{cor:cohomologically_trivial}, and the half-turn row with elliptic mantle has no fiber functor.
\end{proof}

\begin{proposition}\label{prop:frobenius_prime_cyclic_positive}
Let $p,r$ be odd primes with $p\mid r-1$, and let
\[
K=\langle x,y\mid x^r=y^p=e,\; yxy^{-1}=x^a\rangle
\]
where $a$ has order $p$ in $\mathbb{F}_r^\times$. Then $\mathcal{Z}(\Vect{K}^{\omega})\simeq \Rep(D^\omega(K))$ admits a fiber functor for every $\omega\in Z^3(K,\ku^\times)$.
\end{proposition}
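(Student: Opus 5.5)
The natural tool is Proposition~\ref{prop:twisted_double_test}. Its last clause says that if $L\subseteq K\times K$ is a complement to $\Delta K$, the nondegeneracy condition is automatic. The only remaining requirement is then that $\widetilde\omega|_L$ be a coboundary, that is, $\pr_1^*[\omega]-\pr_2^*[\omega]$ restricts to zero in $H^3(L,\ku^\times)$. So the plan is to exhibit, for each class $[\omega]\in H^3(K,\ku^\times)$, a complement $L$ of $\Delta K$ in $K\times K$ on which this difference class vanishes. Since the construction depends only on the cohomology class, we may pass to classes throughout.

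First I would compute $H^3(K,\ku^\times)$. Since $|K|=pr$ with $p,r$ distinct primes, the class restricts injectively to the Sylow subgroups $X=\langle x\rangle\cong C_r$ and $Y=\langle y\rangle\cong C_p$. The $r$-part is the $Y$-invariants of $H^3(C_r,\ku^\times)\cong\mathbb Z/r$. The group $Y$ acts on this by multiplication by $a^2$: the generator of $H^3(C_r)$ is a product of degree-one and degree-two classes, equivalently $H^4(C_r,\mathbb Z)$ is generated by $c_1^2$. Since $a$ has odd order $p$, we have $a^2\neq1$, so the invariants vanish. The $p$-part is all of $H^3(C_p,\ku^\times)$, because $Y$ is a complement to the normal Sylow $X$. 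Hence
\[
H^3(K,\ku^\times)\cong H^3(C_p,\ku^\times)\cong\mathbb Z/p,
\]
detected by restriction to $Y$ and pulled back along the quotient map $\pi:K\to K/X\cong C_p$. I would double-check the invariant computation, since it is the crux: if some $r$-part survived, the argument below would need more.

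Next, with $[\omega]=\pi^*[\beta]$ for $[\beta]\in H^3(C_p,\ku^\times)$, I would take
\[
L=\{(k,k')\in K\times K\mid \pi(k)=\pi(k')\}\ \text{intersected with a suitable subgroup}.
\]
More concretely, I would look for $L$ of order $pr$ with $L\cap\Delta K=1$ and $\pi\circ\pr_1|_L=\pi\circ\pr_2|_L$. Then on $L$ we get $\pr_1^*\omega\sim\pr_2^*\omega$, and the difference class vanishes.

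A candidate is $L=X\times\{e\}\cdot\{(y,y\,x')\}$, but this needs care. The simplest choice is $L=\langle (x,e),(y,y)\rangle$, which contains $(y,y)\in\Delta K$, so it is not a complement. Instead take $L=\langle (x,e),(y,yx)\rangle$ or a similar twisted diagonal. We need an element $(y,y')$ with $\pi(y')=\pi(y)$ but $y'\neq y$, generating a cyclic group of order $p$ together with $X\times e$ normalized, and meeting $\Delta K$ trivially. Since $X\times X$ is normal of order $r^2$ in $\pi^{-1}(\Delta C_p)$, choose $L=(X\times 1)\rtimes\langle(y,y')\rangle$, where $y'=x^{b}yx^{-b}$ is another Sylow-$p$ element. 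Then $(y,y')$ has order $p$ and normalizes $X\times1$.

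To check $L\cap\Delta K=1$, note that elements of $L$ are $(x^iy^j,y'^j)$. Such an element is diagonal iff $x^iy^j=y'^j$. Because $y'^j$ and $y^j$ lie in distinct Sylow subgroups for $j\ne0$, while $x^iy^j$ ranges over the coset $Xy^j$, there is exactly one $i$ per $j$ giving equality. So this $L$ fails. I would therefore switch to $L=(1\times X)\cdot\langle (y,y^{c})\rangle$ with $c\not\equiv1\pmod p$, so that $\pi\circ\pr_1\ne\pi\circ\pr_2$.

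In that case the class computation becomes $\beta-c^*\beta$ on $\langle y\rangle$. Since $H^3(C_p)$ pulls back by $c\mapsto c^2$, this equals $(1-c^2)\beta$, which is nonzero unless $c=\pm1$. Taking $c=-1$ gives $L=\{(y^j,x^iy^{-j})\}$, which meets $\Delta K$ trivially: $y^j=x^iy^{-j}$ forces $y^{2j}\in X$, hence $j=0$ and $i=0$. Also $|L|=pr$. Thus $L$ is a complement, and $\widetilde\omega|_L$ has class $\beta-(-1)^2\beta=0$ on the $p$-part. The $r$-part is automatically zero, because $[\omega]$ has none.

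The main obstacle I expect is the rigorous vanishing of the $r$-part of $H^3(K,\ku^\times)$. Also, in the final step, I must check that $L$ really is a subgroup, i.e.\ that $(y,y^{-1})$ normalizes $1\times X$ (it does, since $y^{-1}$ normalizes $X$). A further point is that restriction to $L$ is computed via its Sylow $p$-subgroup $\langle(y,y^{-1})\rangle$.
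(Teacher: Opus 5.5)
Your final argument is correct and is essentially the paper's proof. Your subgroup $L=(1\times X)\rtimes\langle (y,y^{-1})\rangle=\{(y^j,x^iy^{-j})\}$ is the coordinate swap of the paper's graph $\{(k,f(k))\}=\{(x^iy^j,y^{-j})\}$. In both, $[\widetilde\omega]|_L$ vanishes because $H^3(K,\ku^\times)$ is inflated from $C_p$ (as $a^2\neq 1$) and inversion acts trivially on $H^3(C_p,\ku^\times)$. Two small differences: you compute $H^3(K,\ku^\times)$ via Sylow restriction and stable elements where the paper uses the Lyndon--Hochschild--Serre spectral sequence, and your abandoned attempt with $\pi\circ\pr_1=\pi\circ\pr_2$ on $L$ could never work (it forces $k^{-1}k'\in X$), so drop it from a write-up.
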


\begin{proof}
Let $R=\langle x\rangle$ and let $\pi:K\to P=K/R\simeq C_p$ be the quotient map. Consider the Lyndon--Hochschild--Serre spectral sequence
\[
E_2^{s,t}=H^s(P,H^t(R,\ku^\times))\Rightarrow H^{s+t}(K,\ku^\times).
\]
Since $|R|$ and $|P|$ are coprime, and $H^t(R,\ku^\times)$ is finite $r$-primary for $t>0$, the mixed terms $E_2^{s,t}$ with $s,t>0$ vanish. Moreover $E_2^{0,3}=H^3(R,\ku^\times)^P=0$, because $P$ acts on $H^3(R,\ku^\times)\simeq\mathbb{F}_r$ by multiplication by $a^2\ne1$. Hence the only contribution in total degree $3$ is $E_2^{3,0}=H^3(P,\ku^\times)$, and inflation gives
\[
H^3(K,\ku^\times)\simeq \pi^*H^3(P,\ku^\times).
\]
Thus, after changing $\omega$ within its cohomology class, which does not change the monoidal equivalence class of $\Vect{K}^{\omega}$, we may assume $\omega=\pi^*\eta$ for some $\eta\in Z^3(P,\ku^\times)$.

Choose the subgroup $\langle y\rangle\subset K$ as a section of $P$, and let $\iota:P\to P$ be inversion. Put $f:K\to K$ for the homomorphism obtained by composing $\pi$, $\iota$, and this section, and set
\[
L=\{(k,f(k))\mid k\in K\}\subset K\times K.
\]
Then $L$ is a subgroup of order $|K|$. If $(k,f(k))$ lies in $\Delta K$, then $k=f(k)$ belongs to $\langle y\rangle$ and its image in $P$ is fixed by inversion. Since $p$ is odd, this forces $k=e$. Hence $L\cap \Delta K=\{e\}$, and therefore $\Delta K\,L=K\times K$.

By Proposition~\ref{prop:twisted_double_test}, it remains only to check that $\widetilde{\omega}|_L$ is cohomologically trivial. In cohomology,
\[
[\widetilde{\omega}]|_L
=(\pi\circ \pr_1|_L)^*[\eta]
-(\pi\circ \pr_2|_L)^*[\eta]
=\pi^*[\eta]-(\iota\circ\pi)^*[\eta].
\]
The inversion automorphism of $P\simeq C_p$ acts trivially on $H^3(P,\ku^\times)$: under the identification
$H^3(P,\ku^\times)\simeq H^4(P,\mathbb{Z})$, inversion acts by multiplication by $(-1)^2=1$. Hence the displayed class vanishes; therefore there is a 2-cochain $\nu$ on $L$ with $d\nu=\widetilde{\omega}|_L$. Since $L\cap\Delta K=\{e\}$, the nondegeneracy condition in Proposition~\ref{prop:twisted_double_test} is automatic. Thus $(L,\nu)$ gives a fiber functor.
\end{proof}

\begin{proposition}\label{prop:elliptic_prime_cyclic_no_fiber}
Let $p,q$ be odd primes with $p\mid q+1$. Let $A=\mathbb{F}_{q^2}$, viewed as a two-dimensional vector space over $\mathbb{F}_q$, and let $N:A\to\mathbb{F}_q$ be the norm form, viewed through a fixed non-trivial additive character as a $\ku^\times$-valued quadratic form. For $c\in\ker(N)$ of order $p$ and $\alpha\in H^3(C_p,\ku^\times)$, let
\[
\cE_\alpha=\VecCat_{(A,N)}^{p,\alpha}
\]
be the corresponding $C_p$-crossed extension of $\cC(A,N)$. Then the gauging
\[
\cD_\alpha=(\cE_\alpha)^{C_p}
\]
admits no fiber functor.
\end{proposition}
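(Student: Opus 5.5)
Apply Corollary~\ref{cor:prime_cyclic_gauging} to $\cE_\alpha$ with $G=C_p$. There are three possible sources of a fiber functor on $\cD_\alpha$: a rank-one $\cE_\alpha$-module category ($H=\{e\}$, $S=G$); a $C_p$-equivariant module category $\cE_\alpha\boxtimes_{\cB}\cN$ all of whose components $\cM_g=(\cE_\alpha)_g\boxtimes_{\cB}\cN$ have rank one (subcase (2a)); or an $\cE_\alpha$-module category of rank $p$ on which the generator acts as a $p$-cycle (subcase (2b)). I rule out each in turn. Throughout I use that $\cB=\cC(A,N)$ is pointed with elliptic form: the norm form on $\mathbb F_{q^2}$ is anisotropic, so $A$ has no nonzero isotropic vector and hence no Lagrangian subgroup.

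First I record the rank-one $\cB$-module categories. A rank-one $\cB$-module category is a fiber functor on $\Vect{A}$ with its braiding forgotten. Its underlying $\Vect{A}$-module category is $\ModCat(A,\psi)$ with $\psi$ nondegenerate. For subcase (2a), $\cN$ must have rank one, and so must every $\cM_g$.

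Next I compute $\cM_g$ using Lemma~\ref{lem:pointed_defect_sector}. The rotation $\theta$ is multiplication by $c$, and $1-\theta$ is invertible for $g\ne e$, since $c\ne1$ has no eigenvalue $1$. Hence each $(\cE_\alpha)_g$ for $g\neq e$ has a unique simple object of dimension $q$, so $(\cE_\alpha)_g\simeq\Vect{A}$ as a bimodule twisted by $\id\boxtimes T_\theta$ (Lemma~\ref{lem:crossed_component_brpic}). Under the Brauer--Picard action on $\Vect{A}$-module categories, the rank of $\cM_g$ is governed by an analogue of~\cite[Proposition~10.8]{ENO10}: it equals the number of simple objects of the translate of the Lagrangian $L_\psi\subset A\oplus\widehat A$ by $g_\theta$, namely $|\pr_1(g_\theta L_\psi)|^{-1}|A|$ or a similar expression. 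The graph $L_\psi=\{(a,\psi\text{-dual}(a))\}$ is transverse to $A$. I expect rank one of all translates to force $g_\theta L_\psi$ to stay transverse to $\widehat A$ for every $g$, and I will try to derive from this, together with the ellipticity, an isotropic vector. Concretely, $g_\theta L_\psi\cap(0\oplus\widehat A)\ne0$ should reduce to solving a quadratic equation over $\mathbb F_q$ whose discriminant involves $N$. This is the step I expect to be the main obstacle. If the direct computation fails, I will fall back on the paper's announced route: if (2a) or (1) holds, then $\cD_\alpha$ would be group-theoretical (a rank-one $\cE_\alpha$-module category, or rank-one components, give a pointed dual), contradicting the non-group-theoreticality theorem of~\cite{GPR24}.

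Case (1) is excluded by Lemma~\ref{lem:gt_descends_from_equivariantization} combined with~\cite{GPR24}. A fiber functor on $\cE_\alpha$ would make $\cE_\alpha$, and hence its equivariantization by a $p$-group, solvable. Instead, I will argue that $\cD_\alpha=\cE_\alpha^{C_p}$ would then be Morita equivalent to a pointed category, namely $\Rep$ of a Hopf algebra which is an extension of a group algebra by a group algebra. This is excluded since~\cite{GPR24} shows $\cD_\alpha$ is not group-theoretical. For subcase (2b), a fiber functor on $\cD_\alpha$ would realize it as $\Rep$ of a semisimple Hopf algebra of dimension $p^2q^2$. By~\cite{JL09}, semisimple Hopf algebras of such dimension (with the relevant simple-object dimensions $1$ and $q$, or $p$) are group-theoretical. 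This again contradicts~\cite{GPR24}. In fact this last argument alone disposes of all cases simultaneously, so I would lead with it and keep the module-theoretic analysis only as confirmation where~\cite{JL09} needs input on the dimensions of the simple objects.
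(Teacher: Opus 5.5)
Your proposal has genuine gaps, and the main one is the step you say ``disposes of all cases''. \cite[Corollary~1.2]{JL09} concerns semisimple Hopf algebras of dimension $pq^2$. It says nothing about $\cD_\alpha$, which has dimension $p^2q^2$, and no such result is available in that dimension. Non-group-theoretical semisimple Hopf algebras of dimension $p^2q^2$ exist: Nikshych's $H_p$, of dimension $2^2p^2$, treated in Section~\ref{sec:nikshych}. If a result of that kind were known for odd $p,q$, the proposition would be immediate. The correct use of \cite{JL09} is confined to the case $H=\{e\}$. There, a fiber functor on $\cE_\alpha$ gives $\cE_\alpha\simeq\Rep(H_0)$ with $\dim H_0=pq^2$, so $\cE_\alpha$ would be group-theoretical, contradicting \cite[Theorem~4.4]{GPR24}. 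Your paragraph on case (1), which invokes solvability and ``an extension of a group algebra by a group algebra'', gives no reason for group-theoreticality. Lemma~\ref{lem:gt_descends_from_equivariantization} does not exclude fiber functors at all. Once the global argument is dropped, subcase (2b) has no proof. It follows from a rank count: $\widetilde{\cN}$ has the same underlying category as the indecomposable $\Vect{A}$-module category $\cN\simeq\ModCat(L,\mu)$, whose rank $[A:L]\in\{1,q,q^2\}$ is never the prime $p\neq q$.

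For subcase (2a), the Lagrangian computation you plan cannot give a contradiction, because the rank condition can be satisfied. (Incidentally, the rank-one $\Vect{A}$-module categories are $\ModCat(A,\psi)$ for \emph{every} $[\psi]\in H^2(A,\ku^\times)$, not only for nondegenerate $\psi$.) Take $\cN=\ModCat(A,1)$; its Lagrangian in the $\cB\boxtimes\cB^{\mathrm{rev}}$ coordinates of Lemma~\ref{lem:pointed_defect_sector} is the diagonal. The component of degree $\tau^t$ moves it to the graph of multiplication by $c^{\pm t}$. That graph meets the antidiagonal, which is the Lagrangian of the regular module category, in $\ker(1+c^{\pm t})=0$, since $c^t\neq-1$ when $p$ is odd. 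So $\cE_\alpha\boxtimes_{\Vect{A}}\cN$ really does have $p$ rank-one components. Your fallback, that rank-one components yield a pointed dual, is also false. \cite[Theorem~16]{MeMu12} needs a pointed $\Vect{A}$-module category that is \emph{stable} under every component, not merely one whose translates have rank one.

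The missing idea is to use the equivariant structure itself:
\begin{enumerate}
\item Let $U:\cM\to\cM^\tau$ be the equivalence attached to the generator, and write $U(m_0)\simeq m_s$. Restricting to $\cN$ gives $\cN^{\tau^{-1}}\simeq\cE_{\alpha,s}\boxtimes_{\Vect{A}}\cN$.
\item Since $\rho_\tau|_A$ is multiplication by $c$ and $\det_{\mathbb F_q}(c)=N(c)=1$, twisting fixes $[\mu]$, so $\cN^{\tau^{-1}}\simeq\cN$.
\item If $s\neq0$, then $\cN$ is stable under every component. By \cite[Theorem~16]{MeMu12} this makes $\cE_\alpha$ group-theoretical, contradicting \cite{GPR24}.
\item Hence $s=0$. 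Because $\rho_\tau(X_i)\simeq X_i$, the constraint $U(X_i\overline\otimes m_0)\simeq\rho_\tau(X_i)\overline\otimes U(m_0)$ forces $U(m_i)\simeq m_i$ for all $i$. So the generator cannot act as a $p$-cycle.
\end{enumerate}
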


\begin{proof}
The fusion rules of $\cE_\alpha$ are recalled in~\cite[Proposition~4.1]{GPR24}. Its trivial component is the pointed category $\Vect{A}$, and the non-trivial homogeneous components contain one simple object each. We write these non-invertible simples as $X_i$, $i\in C_p^\times$, with
\[
a\otimes X_i=X_i\otimes a=X_i,\qquad
X_i\otimes X_j=
\begin{cases}
qX_{i+j},& i+j\ne0,\\
\displaystyle\bigoplus_{a\in A}a,& i+j=0.
\end{cases}
\]
Lemma~\ref{lem:gt_descends_from_equivariantization} shows that if $\cD_\alpha$ were group-theoretical, then its de-equivariantization $\cE_\alpha$ would be group-theoretical. Thus the non-group-theoreticality of $\cD_\alpha$ already follows from~\cite[Theorem~4.4]{GPR24}. The fiber-functor assertion is stronger: non-group-theoretical Hopf representation categories can still admit fiber functors. We therefore apply Corollary~\ref{cor:prime_cyclic_gauging}.

First suppose that the subgroup $H$ in Corollary~\ref{cor:prime_cyclic_gauging} is trivial. Then a fiber functor on $\cD_\alpha$ would come from a fiber functor on $\cE_\alpha$. By reconstruction, this would give $\cE_\alpha\simeq \Rep(H_0)$ for a semisimple Hopf algebra $H_0$ of dimension $pq^2$. By~\cite[Corollary~1.2]{JL09}, all semisimple Hopf algebras of dimension $pq^2$ have group-theoretical representation categories. This contradicts~\cite[Theorem~4.4]{GPR24}, where these elliptic categories $\cE_\alpha$ are shown to be non-group-theoretical.

It remains to exclude the case $H=C_p$. In the subcase $S=C_p$ of Corollary~\ref{cor:prime_cyclic_gauging}, the Clifford datum has an $\cE_\alpha$-module category $\widetilde{\cN}$ whose restriction to $\Vect{A}$ is an indecomposable module category $\cN$. The number of simple objects is intrinsic to the underlying abelian category; hence $\#\Irr(\widetilde{\cN})=\#\Irr(\cN)$. By the pointed module-category classification~\cite[Section~2]{Nik08}, such $\Vect{A}$-module categories are indexed by pairs $(L,[\mu])$, where $L\subseteq A$ and $[\mu]\in H^2(L,\ku^\times)$, and have rank $[A:L]$. Hence their ranks are $1$, $q$, or $q^2$, never $p$. Thus this subcase cannot produce a rank-$p$ module category.

Consider now the subcase $S=\{e\}$, and suppose that it produces a fiber functor. Put
\[
\cM=\cE_\alpha\boxtimes_{\Vect{A}}\cN,
\qquad
\cM_t:=\cE_{\alpha,t}\boxtimes_{\Vect{A}}\cN
\quad (t\in C_p),
\]
where $\cE_{\alpha,t}$ denotes the homogeneous component of degree $t$. By Corollary~\ref{cor:prime_cyclic_gauging}, $\cM$ has rank $p$. The $p$ subcategories $\cM_t$ are non-zero indecomposable $\Vect{A}$-module categories, so each one has rank one. Let $m_t$ be the unique simple object of $\cM_t$; in particular, $\cM_0=\cN$ and its unique simple object is $m_0$.

Let
\[
U:\cM\longrightarrow \cM^\tau
\]
be the module equivalence attached to a generator $\tau\in C_p$, and write $U(m_0)\simeq m_s$. Restricting $U$ to $\cN=\cM_0$ and using the convention $X\overline\otimes^{\,\tau}M=\rho_\tau(X)\overline\otimes M$ gives an equivalence of $\Vect{A}$-module categories
\begin{equation}\label{eq:elliptic_shift_invariance}
\cN^{\tau^{-1}}
\simeq \cM_s
\simeq \cE_{\alpha,s}\boxtimes_{\Vect{A}}\cN.
\end{equation}
Indeed, the module-functor constraint for $U$ reads
\[
U(a\overline\otimes M)\simeq \rho_\tau(a)\overline\otimes U(M),
\qquad a\in A,
\]
which is precisely $\Vect{A}$-linearity after twisting the source by $\tau^{-1}$.

By the classification of indecomposable module categories over $\Vect{A}$, the rank-one category $\cN$ is determined by a class $[\mu]\in H^2(A,\ku^\times)$. The restriction of $\rho_\tau$ to $A$ is multiplication by $c$~\cite[Definition~3.1 and the proof of Proposition~4.2]{GPR24}, so twisting by $\tau^{-1}$ pulls this class back along multiplication by $c^{-1}$. Since $\ku^\times$ is divisible,
\[
H^2(A,\ku^\times)\simeq \Hom(\wedge^2 A,\ku^\times),
\]
and multiplication by $c$ acts on $\wedge^2 A$ by
\[
\det_{\mathbb F_q}(c)=N(c)=1.
\]
Therefore $\cN^{\tau^{-1}}\simeq\cN$, and~\eqref{eq:elliptic_shift_invariance} yields
\[
\cE_{\alpha,s}\boxtimes_{\Vect{A}}\cN\simeq\cN.
\]
If $s\ne0$, then $s$ generates $C_p$. Iterating the relative tensor product and using the graded multiplication equivalences shows that the same equivalence holds for every homogeneous component. Moreover, because $A$ is abelian, every rank-one $\Vect{A}$-module category is pointed. Indeed, a simple $\Vect{A}$-module endofunctor has the identity as its underlying functor, and its module structure is a character of $A$; hence the simple objects of the dual category are indexed by $\widehat A$ and are invertible. The group-theoretical criterion~\cite[Theorem~16]{MeMu12} would then imply that $\cE_\alpha$ is group-theoretical, contradicting~\cite[Theorem~4.4]{GPR24}. Hence $s=0$ and $U(m_0)\simeq m_0$.

Finally, for each $i\in C_p^\times$, the non-zero object $X_i\overline\otimes m_0$ belongs to the rank-one component $\cM_i$. Thus
\[
X_i\overline\otimes m_0\simeq n_i m_i
\]
for some integer $n_i>0$. The crossed action preserves the grading and the non-trivial component of degree $i$ has the unique simple object $X_i$, so $\rho_\tau(X_i)\simeq X_i$. Applying the module-functor constraint gives
\[
n_iU(m_i)
\simeq U(X_i\overline\otimes m_0)
\simeq \rho_\tau(X_i)\overline\otimes U(m_0)
\simeq X_i\overline\otimes m_0
\simeq n_i m_i.
\]
Semisimplicity implies $U(m_i)\simeq m_i$ for every $i$. Hence the generator fixes all simple objects of $\cM$ and cannot act as a $p$-cycle. This rules out the last subcase and proves the proposition.
\end{proof}

\begin{lemma}\label{lem:f4_elliptic_extension_non_gt}
Let $A=\mathbb{F}_4$ and let $N:A\to\mathbb{F}_2$ be the norm form, composed with the non-trivial additive character of $\mathbb{F}_2$. For every $\alpha\in H^3(C_3,\ku^\times)$, the crossed extension
\[
\cE_\alpha=\VecCat_{(A,N)}^{3,\alpha}
\]
is non-group-theoretical.
\end{lemma}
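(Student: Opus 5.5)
Since $\cE_\alpha$ is pointed in degree zero with $\cE_{\alpha,e}=\Vect{A}$, $A=\mathbb F_4\simeq C_2^2$, I would use the criterion of~\cite[Theorem~16]{MeMu12} (as in the proof of Proposition~\ref{prop:elliptic_prime_cyclic_no_fiber}): $\cE_\alpha$ is group-theoretical if and only if it admits an indecomposable module category whose dual is pointed, equivalently a pointed module category, equivalently (via Clifford theory, Theorem~\ref{thm:gauging_clifford_fiber_functors} style data, and~\cite{Gal12}) a Clifford datum $(S,\cN,\widetilde\cN)$ with $\cN=\ModCat(L,\mu)$ over $\Vect{A}$ such that the resulting dual is pointed. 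The standard reformulation for graded extensions of a pointed neutral component is the existence of a Lagrangian-type subcategory: $\cE_\alpha$ is group-theoretical iff its center $\mathcal Z(\cE_\alpha)$ contains a Lagrangian subcategory, and since $\mathcal Z(\cE_\alpha)\simeq \cC(A,N)\boxtimes(\text{gauging})^{\mathrm{rev}}$-type relations are messy, I would instead use the direct invariant-Lagrangian criterion: a $G$-crossed extension of a pointed $\cC(A,q)$ is group-theoretical iff there is a $G$-invariant Lagrangian subgroup of $(A,q)$ compatible with the extension data, or else the gauging reduces through a subgroup $S$ with the relevant rank conditions.

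Concretely, the plan is: first compute the fusion rules of $\cE_\alpha$ via Lemma~\ref{lem:pointed_defect_sector}. Here $|A|=4$ is even, so that lemma does not apply verbatim, but its argument does: $\theta$ is multiplication by a primitive cube root $c\in\mathbb F_4$, $1-\theta$ is invertible (since $c\neq1$), so each nontrivial component has a unique simple $X_{\pm}$ of dimension $2$, with $X_+\otimes X_-\simeq\bigoplus_{a}a$ and $X_\pm\otimes X_\pm\simeq 2X_\mp$. Second, suppose $\cE_\alpha$ is group-theoretical and take a pointed indecomposable module category $\cM$; restrict to $\Vect{A}$ and sort by Clifford subgroup $S\in\{e,C_3\}$. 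If $S=e$, $\cM=\bigoplus_t\cE_{\alpha,t}\boxtimes_{\Vect A}\cN$; the dual category $\Fun_{\cE}(\cM,\cM)$ contains simple objects of dimension computed from ranks, and pointedness forces each component to have the same rank as $\cN$ and the bimodule $\cE_{\alpha,t}$ to fix $\cN$, which via Lemma~\ref{lem:crossed_component_brpic} means that $\id\boxtimes T_c$ preserves the Lagrangian subgroup of $\mathcal Z(\Vect A)$ attached to $\cN=\ModCat(L,\mu)$, i.e.\ $\{(x,y)\}$ with $x\in L$ and the $\mu$-twisted annihilator condition. Third, compute: Lagrangians of $\mathcal Z(\Vect{C_2^2})=A\oplus\widehat A$ correspond to $(L,\mu)$; the ones of the form fixed by $\id\boxtimes T_c$ must, under $\cB\boxtimes\cB^{\mathrm{rev}}$, be graphs-type subgroups invariant under $(x,y)\mapsto(x,cy)$. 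I would enumerate: with $(A,N)$ elliptic (no isotropic vector in $\mathbb F_4$ for the norm), the Lagrangians of $(A,N)\oplus(A,\bar N)$ are graphs of isometries $\phi\in O(A,N)\simeq S_3$, and invariance under $\id\times c$ requires $c\phi=\phi$, impossible. If $S=C_3$, $\widetilde\cN$ restricts to a single $\Vect A$-module category stable under all components, and the same invariance computation (now a $C_3$-stable Lagrangian plus the $H^3$-obstruction of~\cite{MeMu12} for extending) gives the contradiction, independently of $\alpha$.

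The main obstacle is making the passage ``pointed dual $\Rightarrow$ invariant Lagrangian in $\mathcal Z(\Vect A)$'' rigorous, because $|A|$ is even and the shortcut $c(a,b)=b_q(a/2,b)$ of Lemma~\ref{lem:pointed_defect_sector} is unavailable; I would handle it by working directly with the quadratic form $N$ on $A\oplus\widehat A$ and the explicit finite list of six Lagrangians of $\mathcal Z(\Vect{C_2^2})$, checking by hand that none is preserved by the order-$3$ automorphism induced by $\cE_{\alpha,\tau}$ except the two coordinate ones $A$ and $\widehat A$, and then showing those two correspond to module categories ($\cN=\Vect A$ regular or $\ModCat(\{0\},1)$-type) whose induced duals contain the $2$-dimensional objects $X_\pm$, hence are not pointed.
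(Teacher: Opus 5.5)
Your Step~3 is exactly the paper's proof, and it proves the lemma by itself:
\begin{itemize}
\item by \cite[Theorem~16 and Remark~17]{MeMu12}, $\cE_\alpha$ is group-theoretical if and only if some Lagrangian subcategory of $\mathcal Z(\Vect{A})$ is stable under $\Phi([\cE_{\alpha,1}])$;
\item Lemma~\ref{lem:crossed_component_brpic} identifies this autoequivalence with $\id_{\cB}\boxtimes T_c$ on $\cB\boxtimes\cB^{\mathrm{rev}}$, independently of $\alpha$;
\item since $q$ is anisotropic, every Lagrangian of $A\oplus A$ with form $q(x)q(y)^{-1}$ is a graph $\Gamma_f$, and stability forces $cf=f$, which is impossible.
\end{itemize}

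Three parts of the surrounding material need trimming. First, the paper cites the Meir--Musicantov criterion directly rather than rederiving it through your $S=e$ / $S=C_3$ Clifford split. In that split you leave the key claim unproved, namely that pointedness forces $\cE_{\alpha,t}\boxtimes_{\Vect{A}}\cN\simeq\cN$; that claim is the content of the cited theorem. Second, the ``even order'' obstacle is illusory. The equivalence $\iota$ and Lemma~\ref{lem:crossed_component_brpic} only use that $\cB$ is nondegenerate, so one computes directly in $A\oplus A$ without halving. Third, the criterion in your first paragraph, via invariant Lagrangians of $(A,q)$ itself, is false as stated. The category $\cB\boxtimes\Vect{C_3}$ with trivial action is pointed, hence group-theoretical, yet the anisotropic $(A,q)$ has no Lagrangian at all.

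The genuine error is your last paragraph, which you offer as the way to make things rigorous but which contradicts Step~3.
\begin{itemize}
\item For the correct autoequivalence \emph{no} Lagrangian is stable. The coordinate Lagrangians are themselves graphs; for example $0\oplus\widehat A$ pulls back to $\{(x,y):x+y=0\}=\Gamma_{\id}$. So Step~3 already excludes them.
\item The coordinate Lagrangians are stable only under $(a,\chi)\mapsto(\theta a,\chi\circ\theta^{-1})$, i.e.\ $T_c\boxtimes T_c$. Up to inversion, this is the Brauer--Picard class of $\Vect{A}$ with one action twisted by $T_c$. That bimodule has rank four, whereas the component $\cE_{\alpha,1}$ has rank one, so you have used the wrong autoequivalence.
\item Even for $T_c\boxtimes T_c$ your list is wrong. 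There are three stable Lagrangians, $\Gamma_f$ with $f\in\{1,c,c^2\}$, including the one attached to $\ModCat(A,\mu)$ with $[\mu]\neq0$. Also, $A\oplus0$ corresponds to the rank-one $\ModCat(A,1)$, not to the regular module category $\ModCat(\{0\},1)$.
\end{itemize}

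The proposed rescue cannot work in principle. Suppose $\cN$ were genuinely stable, and put $\cM=\cE_\alpha\boxtimes_{\Vect{A}}\cN$. Then
\[
\Fun_{\cE_\alpha}(\cM,\cM)\simeq\Fun_{\Vect{A}}(\cN,\cM)\simeq\bigoplus_{t\in C_3}(\Vect{A})^*_{\cN}
\]
as abelian categories. Each summand is pointed of rank $4$, so the dual has rank $12=\FPdim(\cE_\alpha)$ and is therefore pointed. Hence no argument about $X_\pm$, which are objects of $\cE_\alpha$ and not of its duals, could exclude a genuinely stable Lagrangian. Drop that paragraph and the proof is complete.
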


\begin{proof}
Write $q(a)=(-1)^{N(a)}$ and $\cB=\cC(A,q)$, so that the underlying fusion category of $\cB$ is $\Vect{A}$. Thus
\[
q(0)=1,
\qquad
q(a)=-1\quad(a\ne0).
\]
Let $T_c$ be the braided autoequivalence induced by multiplication by an element $c\in\mathbb{F}_4^\times$ of order three. By~\cite[Definition~2.1 and Example~3.3(1)]{GPR24}, the category $\cE_\alpha$ is the braided $C_3$-crossed extension whose crossed action is generated by $T_c$.  There is no additional $H^2$-choice here: multiplication by $3$ is an automorphism of the $2$-group $A$, so $H^2(C_3,A)=0$ for this action.

By Lemma~\ref{lem:crossed_component_brpic}, under the braided equivalence
\[
\iota:\cB\boxtimes\cB^{\mathrm{rev}}
\xrightarrow{\ \simeq\ }
\mathcal Z(\Vect{A}),
\]
the Brauer--Picard class of the degree-one component satisfies
\begin{equation}\label{eq:f4_extension_brpic_component}
\iota^{-1}\Phi([\cE_{\alpha,1}])\iota
\simeq
\id_{\cB}\boxtimes T_c.
\end{equation}
This formula is independent of $\alpha$: the parameter $\alpha$ changes the final associativity coherence of the extension, but not the homomorphism
$C_3\to\operatorname{BrPic}(\Vect{A})$ represented by its homogeneous components.

The group-theoretical criterion for graded extensions~\cite[Theorem~16 and Remark~17]{MeMu12} now says that $\cE_\alpha$ is group-theoretical if and only if a Lagrangian subcategory of $\mathcal Z(\Vect{A})$ is stable under the autoequivalence in~\eqref{eq:f4_extension_brpic_component}.

The metric group of $\cB\boxtimes\cB^{\mathrm{rev}}$ is $A\oplus A$ with quadratic form
\[
Q(x,y)=q(x)q(y)^{-1}.
\]
A Lagrangian subgroup $L\subseteq A\oplus A$ has order four and satisfies $Q|_L=1$. If $(x,0)\in L$ with $x\ne0$, then $Q(x,0)=-1$, a contradiction; similarly, $L$ contains no non-zero element of the form $(0,y)$. Hence both coordinate projections $L\to A$ are injective. Since $|L|=|A|$, there is an automorphism $f\in\Aut(A)$ such that
\[
L=\Gamma_f:=\{(x,f(x)):x\in A\}.
\]
Conversely, every such graph is Lagrangian, because $q$ takes the same value on every non-zero element of $A$.

If $\Gamma_f$ were invariant under $\id_A\oplus T_c$, then
\[
(x,cf(x))\in\Gamma_f
\qquad(x\in A).
\]
Comparison of the first coordinates gives $cf(x)=f(x)$ for all $x$. Since $f$ is surjective, this would force $c=1$, contrary to the choice of $c$. Thus no Lagrangian subcategory is $C_3$-stable, and the criterion cited above proves that $\cE_\alpha$ is non-group-theoretical.
\end{proof}

\begin{proposition}\label{prop:dimension36_no_fiber}
Let $A=\mathbb{F}_4$, viewed as an additive group, and let $N:A\to\mathbb{F}_2$ be the norm form, viewed through a fixed non-trivial additive character as a $\ku^\times$-valued quadratic form. For every $\alpha\in H^3(C_3,\ku^\times)$, the dimension $36$ gauging
\[
\cD_\alpha=\left(\VecCat_{(A,N)}^{3,\alpha}\right)^{C_3}
\]
admits no fiber functor.
\end{proposition}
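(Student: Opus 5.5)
The plan is to follow the proof of Proposition~\ref{prop:elliptic_prime_cyclic_no_fiber} closely, with $p=3$ and $q=2$, replacing the elliptic non-group-theoreticality input by Lemma~\ref{lem:f4_elliptic_extension_non_gt}. First I would record the fusion rules of $\cE_\alpha$ using Lemma~\ref{lem:pointed_defect_sector}, or rather its argument adapted to $|A|=4$. Here $\theta$ is multiplication by $c$, and $1-\theta$ is invertible on $\mathbb{F}_4$. So each non-trivial component $\cE_{\alpha,i}$, $i\in C_3^\times$, has a unique simple object $X_i$ of dimension $2$, and $A$ fixes $X_i$. Hence $\FPdim(\cE_\alpha)=12$. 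Since $G=C_3$ has prime order, Corollary~\ref{cor:prime_cyclic_gauging} applies, and the proof splits into its cases.

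\emph{Case $H=\{e\}$.} A fiber functor on $\cD_\alpha$ of this type is a fiber functor on $\cE_\alpha$, which would make $\cE_\alpha\simeq\Rep(H_0)$ for a semisimple Hopf algebra $H_0$ of dimension $12=3\cdot 2^2$. I would invoke the known fact that semisimple Hopf algebras of dimension $12$ have group-theoretical representation categories: they are either group algebras or duals thereof, or come from the classification in dimension $pq^2$, for instance via~\cite{JL09} if its hypotheses cover $q=2$, or otherwise Natale's classification in dimension $12$. This contradicts Lemma~\ref{lem:f4_elliptic_extension_non_gt}. Checking that the cited classification genuinely covers dimension $12$ is the step I expect to be the main obstacle, and I would look for a citation that explicitly includes $p=3$, $q=2$.

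\emph{Case $H=C_3$, subcase $S=C_3$.} Here one needs an $\cE_\alpha$-module category of rank $3$ whose restriction to $\Vect{A}$ is indecomposable. Indecomposable $\Vect{A}$-module categories have rank $[A:L]\in\{1,2,4\}$, so rank $3$ is impossible.

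\emph{Case $H=C_3$, subcase $S=\{e\}$.} Each $\cM_t=\cE_{\alpha,t}\boxtimes_{\Vect{A}}\cN$ has rank one, and $\cN$ corresponds to a class $[\mu]\in H^2(A,\ku^\times)\simeq\Hom(\wedge^2A,\ku^\times)\simeq\mathbb{Z}/2$. Multiplication by $c$ acts on $\wedge^2A$ by its determinant, which is $1$ because $\mathbb{F}_2^\times$ is trivial. Therefore $\cN^{\tau^{-1}}\simeq\cN$, and as in~\eqref{eq:elliptic_shift_invariance} we get $U(m_0)\simeq m_s$ with $\cE_{\alpha,s}\boxtimes_{\Vect{A}}\cN\simeq\cN$.

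If $s\neq0$, then $s$ generates $C_3$ and every homogeneous component fixes the pointed rank-one category $\cN$. The criterion~\cite[Theorem~16]{MeMu12} would then make $\cE_\alpha$ group-theoretical, contradicting Lemma~\ref{lem:f4_elliptic_extension_non_gt}. Hence $s=0$.

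Next, since $\rho_\tau(X_i)\simeq X_i$ (each component has a unique simple object) and $X_i\overline\otimes m_0\simeq n_i m_i$ with $n_i>0$, the module constraint gives $n_iU(m_i)\simeq n_i m_i$, so $U(m_i)\simeq m_i$ for all $i$. Thus the generator fixes every simple object and cannot act as a $3$-cycle. This excludes the last subcase and completes the proof.
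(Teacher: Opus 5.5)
Your proposal is correct. For $H=\{e\}$ and for $(H,S)=(C_3,C_3)$ it coincides with the paper's proof, and the paper settles the point you flag by citing \cite[Corollary~1.2]{JL09} directly for dimension $3\cdot 2^2$. The two routes differ only in the subcase $H=C_3$, $S=\{e\}$.

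The paper never uses the equivariant functor $U$ in that subcase. The classes $[\cM_t]=[\cE_{\alpha,t}\boxtimes_{\Vect{A}}\cN]$, $t\in C_3$, form the orbit of $[\cN]$ under the action of the homogeneous components on equivalence classes of indecomposable $\Vect{A}$-module categories. All of these classes have rank one, and there are only two rank-one classes because $H^2(A,\ku^\times)\simeq C_2$. A $C_3$-orbit inside a two-element set is a point, so $\cN$ is stable under every component, and \cite[Theorem~16]{MeMu12} then contradicts Lemma~\ref{lem:f4_elliptic_extension_non_gt}. This pigeonhole argument shows that no such induced module category exists at all, independently of any equivariant structure. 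It needs neither the fusion rules of $\cE_\alpha$ nor the twist invariance $\cN^{\tau^{-1}}\simeq\cN$.

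Your transplant of the argument of Proposition~\ref{prop:elliptic_prime_cyclic_no_fiber} is the uniform method, since it does not rely on the numerical accident $2<3$. It goes through, but it costs those two inputs.

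For twist invariance, the class of $\cN^{\tau^{-1}}$ also depends on the tensor structure of $\rho_\tau$ on $\Vect{A}$, not only on pulling back along multiplication by $c$. This is harmless here. The functor $\rho_\tau|_{\Vect{A}}$ is braided, and multiplication by $c$ preserves the bicharacter $\beta(x,y)=(-1)^{x_1y_1+x_1y_2+x_2y_2}$ (coordinates in the basis $\{1,c\}$), which satisfies $\beta(x,x)=q(x)$. So the tensor structure can be taken symmetric, hence cohomologically trivial.

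For the fusion rules, do not invoke Lemma~\ref{lem:pointed_defect_sector} as stated, since its proof halves elements of $A$. The part you need does survive in characteristic two. The crossed braiding gives $(1-\theta)A\subseteq\St_A(X)$, and $1-\theta$ is multiplication by $1+c=c^2$, hence bijective. So $\St_A(X)=A$, and transitivity leaves a unique simple object, of dimension $2$, in each non-trivial component.
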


\begin{proof}
Put $\cE_\alpha=\VecCat_{(A,N)}^{3,\alpha}$. We examine the alternatives in Corollary~\ref{cor:prime_cyclic_gauging}.

First suppose that $H=C_3$. If $S=C_3$, the Clifford datum contains an $\cE_\alpha$-module category $\widetilde{\cN}$ whose restriction to $\Vect{A}$ is an indecomposable module category $\cN$. These categories have the same underlying abelian category and therefore the same rank. Indecomposable module categories over $\Vect{A}$ are parameterized by pairs $(L,[\mu])$, where $L\subseteq A$ and $[\mu]\in H^2(L,\ku^\times)$, and their rank is $[A:L]$~\cite[Section~2]{Nik08}. Since $A\simeq C_2^2$, the possible ranks are $1$, $2$, and $4$, never $3$. Hence this subcase is impossible.

Assume next that $S=\{e\}$. Then
\[
\cM=\cE_\alpha\boxtimes_{\Vect{A}}\cN
     =\bigoplus_{t\in C_3}\cM_t,
\qquad
\cM_t=\cE_{\alpha,t}\boxtimes_{\Vect{A}}\cN.
\]
If this datum produced a fiber functor, Corollary~\ref{cor:prime_cyclic_gauging} would force every $\cM_t$ to have rank one. In particular, $\cN=\cM_0$ has rank one. Such module categories are parameterized by
\[
H^2(A,\ku^\times)
\simeq \Hom(\wedge^2A,\ku^\times)
\simeq C_2,
\]
where the first isomorphism uses the divisibility of $\ku^\times$. Thus there are exactly two equivalence classes of rank-one $\Vect{A}$-module categories.

The homogeneous components of $\cE_\alpha$ act on equivalence classes of indecomposable $\Vect{A}$-module categories by
\[
[\cN]\longmapsto
[\cE_{\alpha,t}\boxtimes_{\Vect{A}}\cN].
\]
The three classes $[\cM_t]$ form the $C_3$-orbit of $[\cN]$, and all of them have rank one. This orbit is therefore contained in a set of two elements. Since a $C_3$-orbit has cardinality one or three, the orbit must have cardinality one. Consequently,
\[
\cE_{\alpha,t}\boxtimes_{\Vect{A}}\cN\simeq\cN
\qquad(t\in C_3),
\]
so $\cN$ is $C_3$-stable.

Moreover, every rank-one $\Vect{A}$-module category is pointed. Indeed, its underlying semisimple category has one simple object, and its simple $\Vect{A}$-module endofunctors are indexed by the characters of $A$; all of them are invertible. The criterion~\cite[Theorem~16]{MeMu12} would therefore make $\cE_\alpha$ group-theoretical, contradicting Lemma~\ref{lem:f4_elliptic_extension_non_gt}. Thus the subcase $S=\{e\}$ is also impossible.

It remains to consider $H=\{e\}$. Corollary~\ref{cor:prime_cyclic_gauging} then requires a fiber functor on $\cE_\alpha$ itself. By reconstruction, this would give a semisimple Hopf algebra $H_1$ of dimension $12$ with $\cE_\alpha\simeq\Rep(H_1)$. Every semisimple Hopf algebra of dimension $pq^2$, and in particular of dimension $3\cdot2^2$, has a group-theoretical representation category~\cite[Corollary~1.2]{JL09}. This again contradicts Lemma~\ref{lem:f4_elliptic_extension_non_gt}. Therefore $\cD_\alpha$ admits no fiber functor.
\end{proof}

\begin{corollary}\label{cor:factorizable_hopf_realizations}
Among the non-pointed nondegenerate braided fusion categories in
Table~\ref{tab:small_radical_gaugings}, the categories admitting a
realization as \(\Rep(H)\) for a finite-dimensional semisimple factorizable
Hopf algebra \(H\) are exactly:
\begin{enumerate}
\item the ordinary dihedral doubles \(\Rep(D(D_r))\);
\item the hyperbolic prime-cyclic twisted doubles
\(\Rep(D^\omega(C_r\rtimes C_p))\), with \(p\mid r-1\), for every
\([\omega]\in H^3(C_r\rtimes C_p,\ku^\times)\).
\end{enumerate}
All other non-pointed entries in the table admit no fiber functor.
\end{corollary}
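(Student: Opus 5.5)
The proof is an assembly of the row-by-row results already established, so I will go through Table~\ref{tab:small_radical_gaugings} row by row. I first note that for a nondegenerate braided fusion category $\cB'$, a realization $\cB'\simeq\Rep(H)$ with $H$ semisimple factorizable is the same thing as a fiber functor on the underlying fusion category. Given a fiber functor, Tannaka--Krein reconstruction \cite[Theorem~5.3.12]{EGNO15} produces $H$. The braiding then transports to an $R$-matrix, and nondegeneracy of the braiding is exactly factorizability. Conversely, the forgetful functor of $\Rep(H)$ is a fiber functor. It therefore suffices to decide, row by row, whether the underlying fusion category admits a fiber functor.

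\emph{Negative rows.} The Ising, Ising-times-pointed, metaplectic and metaplectic-times-pointed rows have non-integer Frobenius--Perron dimensions, so the integrality obstruction recorded before Lemma~\ref{lem:gt_descends_from_equivariantization} excludes them. The rows $\mathcal Z(\TY(A,\chi,\tau))$ are excluded by the same integrality argument, via the computation $\FPdim(Z)=a+b\sqrt r$ with $b>0$. The dihedral row with $\omega|_{C_r}\neq1$ consists of classes $j\not\equiv0\pmod r$. The row with $\omega|_{C_r}=1$ consists of $j\in\{0,r\}$. Proposition~\ref{prop:dihedral_twisted_doubles_no_fiber} then shows that only $j=0$, the ordinary double $\Rep(D(D_r))$, survives. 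The two elliptic rows of dimension $4r^2$ are handled by Proposition~\ref{prop:elliptic_four_r_squared_no_fiber}. The elliptic prime-cyclic row with $p\mid r+1$ is Proposition~\ref{prop:elliptic_prime_cyclic_no_fiber}, applied with $q=r$. The dimension-$36$ row is Proposition~\ref{prop:dimension36_no_fiber}.

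\emph{Positive rows.} Besides the ordinary dihedral doubles, Proposition~\ref{prop:frobenius_prime_cyclic_positive} gives fiber functors on $\Rep(D^\omega(C_r\rtimes C_p))$ for all $[\omega]$ when $p\mid r-1$.

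The main obstacle is bookkeeping rather than new mathematics. I must check that each table row is exactly the category covered by the cited proposition. For the hyperbolic prime-cyclic row, the gauging of $\cC(C_r^2,h)$ by the order-$p$ rotation has to be identified with the twisted double of $C_r\rtimes C_p$; I take this identification from the description column of \cite[Table~3]{GNradical}. For the elliptic rows, the defect-sector description must match the model $\VecCat_{(A,N)}^{p,\alpha}$ of \cite{GPR24}. Once these identifications are checked, the corollary follows by collecting the cases.
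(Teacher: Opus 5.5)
Your proposal is correct and follows the paper's proof essentially verbatim. You reduce semisimple factorizable Hopf realizations to fiber functors via Tannaka--Krein reconstruction, with nondegeneracy of the braiding giving factorizability, and then assemble the row-by-row conclusions from the integrality obstruction and Propositions~\ref{prop:dihedral_twisted_doubles_no_fiber}, \ref{prop:elliptic_four_r_squared_no_fiber}, \ref{prop:frobenius_prime_cyclic_positive}, \ref{prop:elliptic_prime_cyclic_no_fiber}, and~\ref{prop:dimension36_no_fiber}. One bookkeeping slip: your list of integrality exclusions should explicitly include the four dimension-$16$ rows with two Ising factors, which are excluded the same way because they contain a simple object of Frobenius--Perron dimension $\sqrt{2}$.
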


\begin{proof}
The fiber-functor conclusions are assembled in
Table~\ref{tab:small_radical_gaugings} from the integrality obstruction and
Propositions~\ref{prop:dihedral_twisted_doubles_no_fiber},
\ref{prop:elliptic_four_r_squared_no_fiber},
\ref{prop:frobenius_prime_cyclic_positive},
\ref{prop:elliptic_prime_cyclic_no_fiber}, and
\ref{prop:dimension36_no_fiber}. Finite Tannaka--Krein reconstruction turns
fiber functors into semisimple Hopf realizations~\cite[Theorem~5.3.12]{EGNO15};
for a nondegenerate braided representation category, the corresponding
quasitriangular Hopf algebra is factorizable~\cite[Chapter~VIII]{Kas95}.
\end{proof}

\bibliographystyle{alpha}
\bibliography{referencias}

@book{EGNO15,
  author    = {Pavel Etingof and Shlomo Gelaki and Dmitri Nikshych and Victor Ostrik},
  title     = {Tensor categories},
  series    = {Mathematical Surveys and Monographs},
  volume    = {205},
  publisher = {American Mathematical Society},
  address   = {Providence, RI},
  year      = {2015},
  pages     = {xvi+343},
  isbn      = {978-1-4704-2024-6},
  doi       = {10.1090/surv/205},
  url       = {https://doi.org/10.1090/surv/205}
}

@book{Saavedra72,
  author    = {Saavedra Rivano, Neantro},
  title     = {Cat{\'e}gories tannakiennes},
  series    = {Lecture Notes in Mathematics},
  volume    = {265},
  publisher = {Springer-Verlag},
  address   = {Berlin-New York},
  year      = {1972},
  pages     = {ii+418},
  isbn      = {978-3-540-05844-1},
  doi       = {10.1007/BFb0059108},
  url       = {https://doi.org/10.1007/BFb0059108}
}

@incollection{DM82,
  author    = {Deligne, Pierre and Milne, James S.},
  title     = {Tannakian categories},
  booktitle = {Hodge cycles, motives, and {S}himura varieties},
  series    = {Lecture Notes in Mathematics},
  volume    = {900},
  publisher = {Springer-Verlag},
  address   = {Berlin-New York},
  year      = {1982},
  pages     = {101--228},
  doi       = {10.1007/978-3-540-38955-2_4},
  url       = {https://doi.org/10.1007/978-3-540-38955-2_4}
}

@incollection{JS91,
  author    = {Joyal, Andr{\'e} and Street, Ross},
  title     = {An introduction to {T}annaka duality and quantum groups},
  booktitle = {Category theory},
  series    = {Lecture Notes in Mathematics},
  volume    = {1488},
  publisher = {Springer-Verlag},
  address   = {Berlin},
  year      = {1991},
  pages     = {413--492},
  doi       = {10.1007/BFb0084235},
  url       = {https://doi.org/10.1007/BFb0084235}
}

@book{Schauenburg92,
  author    = {Schauenburg, Peter},
  title     = {Tannaka Duality for Arbitrary {H}opf Algebras},
  series    = {Algebra-Berichte},
  number    = {66},
  publisher = {R. Fischer},
  address   = {M{\"u}nchen},
  year      = {1992},
  pages     = {57},
  isbn      = {978-3-88927-100-6}
}

@article{BV07,
  author  = {Brugui{\`e}res, Alain and Virelizier, Alexis},
  title   = {Hopf monads},
  journal = {Adv. Math.},
  fjournal = {Advances in Mathematics},
  volume  = {215},
  number  = {2},
  pages   = {679--733},
  year    = {2007},
  doi     = {10.1016/j.aim.2007.04.011},
  url     = {https://doi.org/10.1016/j.aim.2007.04.011},
  eprint  = {math/0604180}
}

@book{Kas95,
  author    = {Kassel, Christian},
  title     = {Quantum Groups},
  series    = {Graduate Texts in Mathematics},
  volume    = {155},
  publisher = {Springer-Verlag},
  address   = {New York},
  year      = {1995},
  isbn      = {978-1-4612-6900-7},
  doi       = {10.1007/978-1-4612-0783-2},
  url       = {https://doi.org/10.1007/978-1-4612-0783-2}
}

@article {CM17,
    AUTHOR = {Cuadra, Juan and Meir, Ehud},
     TITLE = {Orders of {N}ikshych's {H}opf algebra},
   JOURNAL = {J. Noncommut. Geom.},
  FJOURNAL = {Journal of Noncommutative Geometry},
    VOLUME = {11},
      YEAR = {2017},
    NUMBER = {3},
     PAGES = {919--955},
      ISSN = {1661-6952,1661-6960},
   MRCLASS = {16T05 (14L15 16G30 18D30)},
  MRNUMBER = {3713009},
MRREVIEWER = {Iv\'an\ Ezequiel\ Angiono},
       DOI = {10.4171/JNCG/11-3-5},
       URL = {https://doi.org/10.4171/JNCG/11-3-5},
}

@article {TY98,
    AUTHOR = {Tambara, Daisuke and Yamagami, Shigeru},
     TITLE = {Tensor categories with fusion rules of self-duality for finite
              abelian groups},
   JOURNAL = {J. Algebra},
  FJOURNAL = {Journal of Algebra},
    VOLUME = {209},
      YEAR = {1998},
    NUMBER = {2},
     PAGES = {692--707},
      ISSN = {0021-8693,1090-266X},
   MRCLASS = {18D10 (20K01)},
  MRNUMBER = {1659954},
MRREVIEWER = {Marie\ Choda},
       DOI = {10.1006/jabr.1998.7558},
       URL = {https://doi.org/10.1006/jabr.1998.7558},
}

@article {T00,
    AUTHOR = {Tambara, D.},
     TITLE = {Representations of tensor categories with fusion rules of
              self-duality for abelian groups},
   JOURNAL = {Israel J. Math.},
  FJOURNAL = {Israel Journal of Mathematics},
    VOLUME = {118},
      YEAR = {2000},
     PAGES = {29--60},
      ISSN = {0021-2172,1565-8511},
   MRCLASS = {18D10 (16W30)},
  MRNUMBER = {1776075},
MRREVIEWER = {Graham\ J.\ Ellis},
       DOI = {10.1007/BF02803515},
       URL = {https://doi.org/10.1007/BF02803515},
}

@article{ENO05,
  author  = {Etingof, Pavel and Nikshych, Dmitri and Ostrik, Victor},
  title   = {On fusion categories},
  journal = {Ann. of Math. (2)},
  fjournal = {Annals of Mathematics},
  volume  = {162},
  number  = {2},
  pages   = {581--642},
  year    = {2005},
  doi     = {10.4007/annals.2005.162.581},
  url     = {https://doi.org/10.4007/annals.2005.162.581}
}

@article {ENO10,
    AUTHOR = {Etingof, Pavel and Nikshych, Dmitri and Ostrik, Victor},
     TITLE = {Fusion categories and homotopy theory},
   JOURNAL = {Quantum Topol.},
  FJOURNAL = {Quantum Topology},
    VOLUME = {1},
      YEAR = {2010},
    NUMBER = {3},
     PAGES = {209--273},
      ISSN = {1663-487X,1664-073X},
   MRCLASS = {18D10 (16T05 55P60)},
  MRNUMBER = {2677836},
       DOI = {10.4171/QT/6},
       URL = {https://doi.org/10.4171/QT/6},
      NOTE = {With an appendix by Ehud Meir},
}

@article {CGPW16,
    AUTHOR = {Cui, Shawn X. and Galindo, C\'esar and Plavnik, Julia Yael
              and Wang, Zhenghan},
     TITLE = {On gauging symmetry of modular categories},
   JOURNAL = {Comm. Math. Phys.},
  FJOURNAL = {Communications in Mathematical Physics},
    VOLUME = {348},
      YEAR = {2016},
    NUMBER = {3},
     PAGES = {1043--1064},
      ISSN = {0010-3616,1432-0916},
   MRCLASS = {18D10 (81R50)},
  MRNUMBER = {3555361},
       DOI = {10.1007/s00220-016-2633-8},
       URL = {https://doi.org/10.1007/s00220-016-2633-8},
}

@article {Nik08,
    AUTHOR = {Nikshych, Dmitri},
     TITLE = {Non-group-theoretical semisimple {H}opf algebras from group
              actions on fusion categories},
   JOURNAL = {Selecta Math. (N.S.)},
  FJOURNAL = {Selecta Mathematica. New Series},
    VOLUME = {14},
      YEAR = {2008},
    NUMBER = {1},
     PAGES = {145--161},
      ISSN = {1022-1824,1420-9020},
   MRCLASS = {16W30 (18D10)},
  MRNUMBER = {2480712},
MRREVIEWER = {Ram\'on\ Gonz\'alez Rodr\'iguez},
       DOI = {10.1007/s00029-008-0060-1},
       URL = {https://doi.org/10.1007/s00029-008-0060-1},
}

@article {GP17,
    AUTHOR = {Galindo, C\'esar and Plavnik, Julia Yael},
     TITLE = {Tensor functors between {M}orita duals of fusion categories},
   JOURNAL = {Lett. Math. Phys.},
  FJOURNAL = {Letters in Mathematical Physics},
    VOLUME = {107},
      YEAR = {2017},
    NUMBER = {3},
     PAGES = {553--590},
      ISSN = {0377-9017,1573-0530},
   MRCLASS = {18D10 (20J06 20J15)},
  MRNUMBER = {3606516},
MRREVIEWER = {Ehud\ Meir},
       DOI = {10.1007/s11005-016-0914-y},
       URL = {https://doi.org/10.1007/s11005-016-0914-y},
}

@article {T01,
    AUTHOR = {Tambara, Daisuke},
     TITLE = {Invariants and semi-direct products for finite group actions
              on tensor categories},
   JOURNAL = {J. Math. Soc. Japan},
  FJOURNAL = {Journal of the Mathematical Society of Japan},
    VOLUME = {53},
      YEAR = {2001},
    NUMBER = {2},
     PAGES = {429--456},
      ISSN = {0025-5645,1881-1167},
   MRCLASS = {18D10 (18D05)},
  MRNUMBER = {1815142},
MRREVIEWER = {Graham\ J.\ Ellis},
       DOI = {10.2969/jmsj/05320429},
       URL = {https://doi.org/10.2969/jmsj/05320429},
}

@article {Gal12,
    AUTHOR = {Galindo, C\'esar},
     TITLE = {Clifford theory for graded fusion categories},
   JOURNAL = {Israel J. Math.},
  FJOURNAL = {Israel Journal of Mathematics},
    VOLUME = {192},
      YEAR = {2012},
    NUMBER = {2},
     PAGES = {841--867},
      ISSN = {0021-2172,1565-8511},
   MRCLASS = {18D10 (20C05)},
  MRNUMBER = {3009745},
MRREVIEWER = {Alexander\ Zimmermann},
       DOI = {10.1007/s11856-012-0055-7},
       URL = {https://doi.org/10.1007/s11856-012-0055-7},
}

@article {GM11,
    AUTHOR = {Galindo, C\'esar and Mombelli, Mart\'in},
     TITLE = {Module categories over finite pointed tensor categories},
   JOURNAL = {Selecta Math. (N.S.)},
  FJOURNAL = {Selecta Mathematica. New Series},
    VOLUME = {18},
      YEAR = {2012},
    NUMBER = {2},
     PAGES = {357--389},
      ISSN = {1022-1824,1420-9020},
   MRCLASS = {16D90 (16T05 18D10 19D23)},
  MRNUMBER = {2927237},
MRREVIEWER = {Alessandro\ Ardizzoni},
       DOI = {10.1007/s00029-011-0067-x},
       URL = {https://doi.org/10.1007/s00029-011-0067-x},
}

@article {MM20,
    AUTHOR = {Mej\'ia Casta\~no, Adriana and Mombelli, Mart\'in},
     TITLE = {Equivalence classes of exact module categories over graded tensor categories},
   JOURNAL = {Comm. Algebra},
  FJOURNAL = {Communications in Algebra},
    VOLUME = {48},
      YEAR = {2020},
    NUMBER = {10},
     PAGES = {4102--4131},
      ISSN = {0092-7872,1532-4125},
       DOI = {10.1080/00927872.2020.1755679},
       URL = {https://doi.org/10.1080/00927872.2020.1755679},
}

@article {MeMu12,
    AUTHOR = {Meir, Ehud and Musicantov, Evgeny},
     TITLE = {Module categories over graded fusion categories},
   JOURNAL = {J. Pure Appl. Algebra},
  FJOURNAL = {Journal of Pure and Applied Algebra},
    VOLUME = {216},
      YEAR = {2012},
    NUMBER = {11},
     PAGES = {2449--2466},
      ISSN = {0022-4049,1873-1376},
   MRCLASS = {18D10},
  MRNUMBER = {2927176},
       DOI = {10.1016/j.jpaa.2012.03.014},
       URL = {https://doi.org/10.1016/j.jpaa.2012.03.014},
}

@article {ENO11,
    AUTHOR = {Etingof, Pavel and Nikshych, Dmitri and Ostrik, Victor},
     TITLE = {Weakly group-theoretical and solvable fusion categories},
   JOURNAL = {Adv. Math.},
  FJOURNAL = {Advances in Mathematics},
    VOLUME = {226},
      YEAR = {2011},
    NUMBER = {1},
     PAGES = {176--205},
      ISSN = {0001-8708,1090-2082},
   MRCLASS = {18D10 (16D90 16T05)},
  MRNUMBER = {2735754},
MRREVIEWER = {Rongmin\ Lu},
       DOI = {10.1016/j.aim.2010.06.009},
       URL = {https://doi.org/10.1016/j.aim.2010.06.009},
}

@article {Gal11,
    AUTHOR = {Galindo, C\'esar},
     TITLE = {Clifford theory for tensor categories},
   JOURNAL = {J. Lond. Math. Soc. (2)},
  FJOURNAL = {Journal of the London Mathematical Society. Second Series},
    VOLUME = {83},
      YEAR = {2011},
    NUMBER = {1},
     PAGES = {57--78},
      ISSN = {0024-6107,1469-7750},
   MRCLASS = {18D10 (16D90)},
  MRNUMBER = {2763944},
MRREVIEWER = {Alexander\ Zimmermann},
       DOI = {10.1112/jlms/jdq064},
       URL = {https://doi.org/10.1112/jlms/jdq064},
}

@article {Gal17,
    AUTHOR = {Galindo, C\'esar},
     TITLE = {Coherence for monoidal {$G$}-categories and braided
              {$G$}-crossed categories},
   JOURNAL = {J. Algebra},
  FJOURNAL = {Journal of Algebra},
    VOLUME = {487},
      YEAR = {2017},
     PAGES = {118--137},
      ISSN = {0021-8693,1090-266X},
   MRCLASS = {18D10},
  MRNUMBER = {3671186},
MRREVIEWER = {Brendan\ Fong},
       DOI = {10.1016/j.jalgebra.2017.05.027},
       URL = {https://doi.org/10.1016/j.jalgebra.2017.05.027},
}

@article {Os03,
    AUTHOR = {Ostrik, Victor},
     TITLE = {Module categories over the {D}rinfeld double of a finite
              group},
   JOURNAL = {Int. Math. Res. Not.},
  FJOURNAL = {International Mathematics Research Notices},
      YEAR = {2003},
    VOLUME = {2003},
    NUMBER = {27},
     PAGES = {1507--1520},
      ISSN = {1073-7928,1687-0247},
   MRCLASS = {18D10 (16D90 17B10 18E10)},
  MRNUMBER = {1976233},
MRREVIEWER = {Volodymyr\ V.\ Lyubashenko},
       DOI = {10.1155/S1073792803205079},
       URL = {https://doi.org/10.1155/S1073792803205079},
}

@article{EO04,
  author  = {Pavel Etingof and Victor Ostrik},
  title   = {Finite tensor categories},
  journal = {Mosc. Math. J.},
  fjournal = {Moscow Mathematical Journal},
  volume  = {4},
  number  = {3},
  pages   = {627--654},
  year    = {2004},
  doi     = {10.17323/1609-4514-2004-4-3-627-654},
  url     = {https://doi.org/10.17323/1609-4514-2004-4-3-627-654},
  note    = {arXiv:math/0301027}
}

@article{CGR00,
  author  = {Coste, Antoine and Gannon, Terry and Ruelle, Philippe},
  title   = {Finite group modular data},
  journal = {Nuclear Physics B},
  volume  = {581},
  number  = {3},
  pages   = {679--717},
  year    = {2000},
  doi     = {10.1016/S0550-3213(00)00285-6},
  url     = {https://doi.org/10.1016/S0550-3213(00)00285-6},
  eprint  = {hep-th/0001158}
}

@article{GPR24,
  author  = {Galindo, C{\'e}sar and Plavnik, Julia and Rowell, Eric C.},
  title   = {Integral non-group-theoretical modular categories of dimension {$p^2q^2$}},
  journal = {Bull. Belg. Math. Soc. Simon Stevin},
  volume  = {31},
  number  = {4},
  pages   = {516--525},
  year    = {2024},
  doi     = {10.36045/j.bbms.240415},
  url     = {https://doi.org/10.36045/j.bbms.240415},
  eprint  = {2404.03826}
}

@article{JL09,
  author  = {Jordan, David and Larson, Eric},
  title   = {On the classification of certain fusion categories},
  journal = {J. Noncommut. Geom.},
  volume  = {3},
  number  = {3},
  pages   = {481--499},
  year    = {2009},
  doi     = {10.4171/JNCG/44},
  url     = {https://doi.org/10.4171/JNCG/44},
  eprint  = {0812.1603}
}

@article{GNradical,
  author  = {Green, Jason and Nikshych, Dmitri},
  title   = {The {T}annakian radical and the mantle of a braided fusion category},
  journal = {Adv. Math.},
  fjournal = {Advances in Mathematics},
  volume  = {482},
  pages   = {110622},
  year    = {2025},
  doi     = {10.1016/j.aim.2025.110622},
  url     = {https://doi.org/10.1016/j.aim.2025.110622},
  eprint  = {2412.16722}
}

@article{DN13,
  author  = {Davydov, Alexei and Nikshych, Dmitri},
  title   = {The {P}icard crossed module of a braided tensor category},
  journal = {Algebr. Number Theory},
  volume  = {7},
  number  = {6},
  pages   = {1365--1403},
  year    = {2013},
  doi     = {10.2140/ant.2013.7.1365},
  url     = {https://doi.org/10.2140/ant.2013.7.1365},
  eprint  = {1202.0061}
}

\end{document}